\documentclass[11pt]{article}
\usepackage{amsfonts}
\textwidth 5.81in \textheight 9in \topmargin -0.3in
\oddsidemargin=0.3in \evensidemargin=0.3in
\usepackage{latexsym,amssymb,amsmath,graphics,cite,arydshln}
\usepackage{tikz}
\usepackage{fancyhdr}
\usepackage{lipsum}
\usepackage{lineno}
\usepackage{color}

\usepackage{algorithm} 
\usepackage{algorithmic} 
\usepackage{multirow} 
\usepackage{amsmath}
\usepackage{xcolor}


\begin{document}
\newcommand{\qed}{\hphantom{.}\hfill $\Box$\medbreak}
\newcommand{\proof}{\noindent{\bf Proof \ }}

\newtheorem{theorem}{Theorem}[section]
\renewcommand{\theequation}{\thesection.\arabic{equation}}
\newtheorem{lemma}[theorem]{Lemma}
\newtheorem{corollary}[theorem]{Corollary}
\newtheorem{remark}[theorem]{Remark}
\newtheorem{example}[theorem]{Example}
\newtheorem{definition}[theorem]{Definition}
\newtheorem{construction}[theorem]{Construction}
\newtheorem{fact}[theorem]{Fact}
\newtheorem{proposition}[theorem]{Proposition}

\begin{center}
{\Large\bf Parallel multilevel constructions for constant dimension codes  \footnote{Supported by NSFC under Grant 11971053 (Y. Chang), and NSFC under Grant 11871095 (T. Feng).}
}

\vskip12pt

Shuangqing Liu, Yanxun Chang, Tao Feng\\[2ex] {\footnotesize Department of Mathematics, Beijing Jiaotong University, Beijing 100044, P. R. China}\\
{\footnotesize
16118420@bjtu.edu.cn, yxchang@bjtu.edu.cn, tfeng@bjtu.edu.cn}
\vskip12pt

\end{center}

\vskip12pt

\noindent {\bf Abstract:} Constant dimension codes (CDCs), as special subspace codes, have received a lot of attention due to their application in random network coding. This paper introduces a family of new codes, called rank metric codes with given ranks (GRMCs), to generalize the parallel construction in [Xu and Chen, IEEE Trans. Inf. Theory, 64 (2018), 6315--6319] and the classic multilevel construction. A Singleton-like upper bound and a lower bound for GRMCs derived from Gabidulin codes are given. Via GRMCs, two effective constructions for CDCs are presented by combining the parallel construction and the multilevel construction. Many CDCs with larger size than the previously best known codes are given. The ratio between the new lower bound and the known upper bound for $(4\delta,2\delta,2\delta)_q$-CDCs is calculated. It is greater than 0.99926 for any prime power $q$ and any $\delta\geq 3$.
\vskip12pt

\noindent {\bf Keywords}: constant dimension code, rank-metric code, multilevel construction, parallel construction.


\section{Introduction}



Subspace codes, constant dimension codes in particular, have drawn significant attention due to the work by K\"{o}tter and Kschischang \cite{kk}, where they presented an application of such codes for error correction in random network coding.

Let $\mathbb F_q$ be the finite field of order $q$, and $\mathbb F_q^n$ be the set of all vectors of length $n$ over $\mathbb F_q$. $\mathbb F_q^n$ is an $n$-dimensional vector space over $\mathbb F_q$. Given a nonnegative integer $k\leq n$, the set of all $k$-dimensional subspaces of $\mathbb F_q^n$ is called the {\em Grassmannian} ${\cal G}_q(n,k)$. The cardinality of ${\cal G}_q(n,k)$ is given by the {\em $q$-ary Gaussian coefficient}
$$
|{\cal G}_q(n,k)|=
\begin{bmatrix}
 n \\
 k \\
\end{bmatrix}_q\triangleq \prod \limits_{i=0}^{k-1}\frac{q^{n-i}-1}{q^{k-i}-1}.
$$
For any two subspaces $\mathcal U,\mathcal V\in \mathcal {\cal G}_q(n,k)$, their {\em subspace distance} is defined by
\begin{eqnarray}\label{1.1}
d_S(\mathcal U,\mathcal V)\triangleq \dim \mathcal U+\dim \mathcal V-2\dim(\mathcal U\cap \mathcal V)=2(k-\dim(\mathcal U\cap\mathcal V)).
\end{eqnarray}

A subset $\cal C$ of the Grassmannian ${\cal G}_q(n,k)$ is called an $(n,d,k)_q$ {\em constant-dimension code} (CDC), if $d_S(\mathcal U,\mathcal V)\geq d$ for all $\mathcal U, \mathcal V\in \mathcal C$ and $\mathcal U\neq \mathcal V$. Elements in $\cal C$ are called {\em codewords}.
An $(n,d,k)_q$-CDC with $M$ codewords is written as an $(n,M,d,k)_q$-CDC. Given $n,d,k$ and $q$, denote by $A_q(n, d, k)$ the maximum number of codewords among all $(n,d,k)_q$-CDCs. An $(n,d,k)_q$-CDC with
$A_q(n, d, k)$ codewords is said to be {\em optimal}.

Without loss of generality, assume that $n\geq 2k$. This assumption can be made as a consequence of the fact $A_q(n, d, k)=A_q(n, d, n-k)$, which can be obtained by taking orthogonal complements of subspaces (cf. \cite{xf}). Furthermore, by \eqref{1.1}, when $2k<d$, any nonempty $(n,d,k)_q$-CDC consists of exactly one codeword. Therefore, we always assume that $n\geq 2k\geq d$.

A $k$-dimensional subspace $\mathcal U$ of $\mathbb F^n_q$ can be represented by a {\em $k\times n$ generator matrix} $\boldsymbol{U}$ whose rows form a basis of $\mathcal U$. Let $\mathbb F^{k \times n}_q$ denote the set of all $k \times n$ matrices over $\mathbb F_q$. For $\mathcal U,\mathcal V\in {\cal G}_q(n,k)$, the subspace distance on ${\cal G}_q(n,k)$ is also given by
\begin{eqnarray}\label{1.2}
d_S(\mathcal U,\mathcal V)=2\cdot {\rm rank} \binom{\boldsymbol{U}}{\boldsymbol{V}}-2k,
\end{eqnarray}
where $\boldsymbol{U},\boldsymbol{V} \in \mathbb F^{k\times n}_q$ are matrices such that $\mathcal U=$ rowspace$(\boldsymbol{U})$ and $\mathcal V=$ rowspace$(\boldsymbol{V})$. The two matrices are usually not unique.

By \eqref{1.2}, in a CDC, the minimum subspace distance $d$ is even. If $d=0$ or $2$, then all $k$-dimensional subspaces of $\mathbb F_q^n$ constitute an optimal $(n,d,k)_q$-CDC. It follows that $d=4$ is the minimum nontrivial value.

\subsection{Lifted maximum rank distance codes}

To obtain optimal CDCs, Silva, Kschischang and K\"{o}tter \cite{skk} pointed out that lifted maximum rank distance (MRD) codes can result in asymptotically optimal CDCs, and can be decoded efficiently in the context of random linear network coding.

For a matrix $\boldsymbol{A} \in \mathbb F^{m \times n}_q$, the rank of $\boldsymbol{A}$ is denoted by rank$(\boldsymbol{A})$. The set $\mathbb F^{m \times n}_q$ is an $\mathbb F_q$-vector space. The {\em rank distance} on $\mathbb F^{m \times n}_q$ is defined by
\begin{center}
  $d_R(\boldsymbol{A}, \boldsymbol{B})={\rm rank}(\boldsymbol{A}-\boldsymbol{B}),~{\rm for}~\boldsymbol{A}, \boldsymbol{B} \in \mathbb F^{m \times n}_q$.
\end{center}
An $[m \times n, k, \delta]_q$ {\em rank-metric code} $\mathcal D$ is a $k$-dimensional $\mathbb F_q$-linear subspace of $\mathbb F^{m \times n}_q$ with {\em minimum rank distance} $$\delta=\underset{\boldsymbol{A,B} \in \mathcal D, \boldsymbol{A}\neq \boldsymbol{B}}{\min}\{d_R(\boldsymbol{A},\boldsymbol{B})\}.$$
Clearly $$\delta=\underset{\boldsymbol{A} \in \mathcal D, \boldsymbol{A}\neq \textbf{0}}{\min}\{{\rm rank}(\boldsymbol{A})\}.$$
Elements in $\cal D$ are called {\em codewords}. The Singleton-like upper bound for rank-metric codes implies that $$k \leq \max\{m,n\}(\min\{m,n\}-\delta +1)$$
holds for any $[m \times n, k, \delta]_q$ code. When the equality holds, $\cal D$ is called a {\em linear maximum rank distance code}, denoted by an MRD$[m \times n, \delta]_q$ code. Linear MRD codes exists for all feasible parameters (cf. \cite{d,g,r}).

Write $\boldsymbol{I}_{k}$ as the $k\times k$ identity matrix.

\begin{proposition}[Lifted MRD codes,\cite{skk}] \label{lift}
Let $n\geq 2k$. The lifted MRD code $$\mathcal C=\{{\rm rowspace}(\boldsymbol{I}_k \mid \boldsymbol{A}):\boldsymbol{A} \in \mathcal D\}$$ is an $(n,q^{(n-k)(k-\delta+1)},2\delta,k)_q$-CDC, where $\mathcal D$ is an MRD$[k\times (n-k),\delta]_q$ code.
\end{proposition}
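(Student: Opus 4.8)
The plan is to verify two things: that every element of $\mathcal{C}$ is a genuine $k$-dimensional subspace, and that any two distinct codewords are at subspace distance at least $2\delta$. The cardinality claim will follow immediately once we know the map $\boldsymbol{A}\mapsto\operatorname{rowspace}(\boldsymbol{I}_k\mid\boldsymbol{A})$ is injective on $\mathcal{D}$, since an MRD$[k\times(n-k),\delta]_q$ code has $q^{(n-k)(k-\delta+1)}$ codewords by the Singleton-like bound with equality.

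First I would observe that for any $\boldsymbol{A}\in\mathbb{F}_q^{k\times(n-k)}$ the matrix $(\boldsymbol{I}_k\mid\boldsymbol{A})$ has rank exactly $k$ because of the identity block, so $\operatorname{rowspace}(\boldsymbol{I}_k\mid\boldsymbol{A})\in\mathcal{G}_q(n,k)$. Injectivity is then easy: if $(\boldsymbol{I}_k\mid\boldsymbol{A})$ and $(\boldsymbol{I}_k\mid\boldsymbol{B})$ have the same rowspace, then since both are in reduced form on the first $k$ coordinates, the unique row-reduced generator forces $\boldsymbol{A}=\boldsymbol{B}$. This gives $|\mathcal{C}|=|\mathcal{D}|=q^{(n-k)(k-\delta+1)}$.

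The core of the proof is the distance computation. Take distinct $\boldsymbol{A},\boldsymbol{B}\in\mathcal{D}$ and apply \eqref{1.2} with generator matrices $\boldsymbol{U}=(\boldsymbol{I}_k\mid\boldsymbol{A})$ and $\boldsymbol{V}=(\boldsymbol{I}_k\mid\boldsymbol{B})$. The plan is to compute the rank of the stacked $2k\times n$ matrix by performing block row operations: subtracting the top block from the bottom block clears the identity in the lower-left and produces
\begin{eqnarray*}
\operatorname{rank}\binom{\boldsymbol{I}_k\mid\boldsymbol{A}}{\boldsymbol{I}_k\mid\boldsymbol{B}}
=\operatorname{rank}\binom{\boldsymbol{I}_k\mid\boldsymbol{A}}{\boldsymbol{0}\mid\boldsymbol{B}-\boldsymbol{A}}
=k+\operatorname{rank}(\boldsymbol{B}-\boldsymbol{A}).
\end{eqnarray*}
Substituting into \eqref{1.2} yields $d_S(\mathcal{U},\mathcal{V})=2k+2\operatorname{rank}(\boldsymbol{B}-\boldsymbol{A})-2k=2\operatorname{rank}(\boldsymbol{B}-\boldsymbol{A})=2\,d_R(\boldsymbol{A},\boldsymbol{B})$.

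Finally, since $\mathcal{D}$ has minimum rank distance $\delta$, every nonzero difference $\boldsymbol{B}-\boldsymbol{A}$ has rank at least $\delta$, so $d_S(\mathcal{U},\mathcal{V})\geq 2\delta$ for all distinct codewords, establishing that $\mathcal{C}$ is an $(n,q^{(n-k)(k-\delta+1)},2\delta,k)_q$-CDC. I do not anticipate a genuine obstacle here; the only point requiring care is justifying that the block row reduction preserves rank and that the triangular block structure of the reduced matrix contributes exactly $k+\operatorname{rank}(\boldsymbol{B}-\boldsymbol{A})$, which is standard but should be stated explicitly to make the translation between rank distance and subspace distance transparent.
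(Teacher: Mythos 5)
Your argument is correct and follows essentially the same route as the paper: block row reduction of the stacked generator matrices to obtain $d_S(\mathcal U,\mathcal V)=2\,\mathrm{rank}(\boldsymbol B-\boldsymbol A)\geq 2\delta$. The only difference is that you also spell out the injectivity and cardinality details, which the paper leaves implicit.
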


We outline the proof of Proposition \ref{lift} for later use. It suffices to check the subspace distance of $\mathcal C$. For any $\mathcal U, \mathcal V\in \mathcal C$ and $\mathcal U\neq\mathcal V$, where $\mathcal U=$ rowspace$(\boldsymbol{I}_k\mid \boldsymbol{A})$ and $\mathcal V=$ rowspace$(\boldsymbol{I}_k\mid \boldsymbol{B})$, we have
\begin{align*}
d_S(\mathcal U,\mathcal V)&
=2\cdot{\rm rank}\left(
                                                  \begin{array}{ccc}
                                                    \boldsymbol I_k & \boldsymbol A \\
                                                    \boldsymbol I_k & \boldsymbol B \\
                                                  \end{array}
                                                \right)-2k
                                                  =2\cdot{\rm rank}\left(
                                                  \begin{array}{ccc}
                                                    \boldsymbol I_k & \boldsymbol A \\
                                                    \boldsymbol O & \boldsymbol {B-A} \\
                                                  \end{array}
                                                \right)-2k\\
                                                &=2\cdot{\rm rank}(\boldsymbol {B-A})\geq 2\delta.
\end{align*}


Given $n,\delta,k$ and $q$, denote by $\bar{A}_q(n, 2\delta, k)$ the maximum number of codewords among all $(n,2\delta,k)_q$-CDC containing a lifted MRD code $(n,q^{(n-k)(k-\delta+1)},2\delta,k)_q$-CDC as a subset. Many constructions for CDCs with large number of codewords known in the literature (cf. \cite{es09,gy,se13,st15,skk,s,tr}) produce codes containing a lifted MRD code. This motivates the study, initialized by Etzion and Silberstein \cite{es13}, on determining the lower and upper bounds on $\bar{A}_q(n, d, k)$.

Etzion and Silberstein \cite{es09} presented a simple but effective construction, named the multilevel construction, which generalizes the lifted MRD codes.
Trautmann and  Rosenthal in \cite{tr} improved the multilevel construction by pending dots. Using the idea of pending dots and graph matchings, Etzion, Silberstein in \cite{es13} and Silberstein, Trautmann in \cite{st15} constructed large subspace codes in ${\cal G}_q(n,k)$ of minimum subspace distance $d=4$ or $2k-2$. Xu and Chen \cite{xc} presented a new construction which can be also seen as a generalization of the lifted MRD codes. Heinlein \cite{Heinlein} summarized the upper bounds of CDCs which contain lifted MRD codes as follows.

\begin{theorem} {\rm \cite[Theorem 1]{Heinlein}} \label{upper-h}
For $n\geq 2k$, let $\mathcal C$ be an $(n,2\delta,k)_q$-CDC which contains a lifted MRD code.
\begin{itemize}
\item[$(1)$] If $k<2\delta$ and $n\geq 3\delta$, then $\bar{A}_q(n,2\delta,k)\leq q^{(n-k)(k-\delta+1)}+A_q(n-k,2(2\delta-k),\delta)$.
If additionally $k=\delta$, $n\equiv r \pmod k$, $0\leq r< k$, and ${\small\begin{bmatrix}
 r \\
 1 \\
 \end{bmatrix}_q}<k$, or $(n,2\delta,k)\in\left\{(6+3l,4+2l,3+l),(6l,4l,3l)\mid l\geq1\right\}$, then the bound can be achieved.

\item[$(2)$] If $k<2\delta$ and $n<3\delta$, then $\bar{A}_q(n,2\delta,k)= q^{(n-k)(k-\delta+1)}+1$.

\item[$(3)$] If $2\delta\leq k<3\delta$, then
\begin{align*}
\bar{A}_q(n,2\delta,k)\leq q^{(n-k)(k-\delta+1)}&+A_q(n-k,6\delta-2k,2\delta)\\&+q^{(k-2\delta+1)(n-k-\delta)}\frac{{\small\begin{bmatrix}
 n-k \\
 \delta \\
 \end{bmatrix}_q}{\small\begin{bmatrix}
 k \\
 2\delta-1 \\
 \end{bmatrix}_q}}{{\small\begin{bmatrix}
 k-\delta \\
 \delta-1 \\
 \end{bmatrix}_q}}.\end{align*}
\end{itemize}
\end{theorem}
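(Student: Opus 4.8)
The plan is to fix the $(n-k)$-dimensional subspace $\mathcal E=\langle e_{k+1},\dots,e_n\rangle$ spanned by the last $n-k$ coordinates, and to classify the codewords of $\mathcal C$ by the invariant $i(\mathcal U)=\dim(\mathcal U\cap\mathcal E)$. Putting each codeword in reduced row echelon form, $i(\mathcal U)$ equals the number of pivots lying among the last $n-k$ columns; in particular every lifted MRD codeword $\mathrm{rowspace}(\boldsymbol I_k\mid\boldsymbol A)$ has $i=0$, and conversely any codeword with $i=0$ has echelon form $(\boldsymbol I_k\mid\boldsymbol A)$, so by the maximality of the MRD code $\mathcal D$ it must already lie in $\mathcal D$. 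The crucial first step is then to show that every codeword outside the lifted MRD code in fact satisfies $i(\mathcal U)\ge\delta$.

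For this key lemma I would use the interpolation property of MRD codes: since $\mathcal D$ is an $\mathrm{MRD}[k\times(n-k),\delta]_q$ code, for every $S\subseteq\mathbb F_q^k$ with $\dim S=k-\delta+1$ the restriction map $\boldsymbol A\mapsto \boldsymbol A|_S$ is a bijection from $\mathcal D$ onto $\mathrm{Hom}(S,\mathbb F_q^{n-k})$. If some codeword $\mathcal U$ had $1\le i(\mathcal U)\le\delta-1$, its echelon form would contain at least $k-\delta+1$ rows with pivots among the first $k$ columns; writing these as $(u_1^{(j)}\mid u_2^{(j)})$ with the $u_1^{(j)}$ linearly independent, the interpolation property yields $\boldsymbol A\in\mathcal D$ with $u_1^{(j)}\boldsymbol A=u_2^{(j)}$ for $j=1,\dots,k-\delta+1$, so these vectors lie in $\mathcal W=\mathrm{rowspace}(\boldsymbol I_k\mid\boldsymbol A)$. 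Then $\dim(\mathcal U\cap\mathcal W)\ge k-\delta+1$ and $d_S(\mathcal U,\mathcal W)\le 2\delta-2$, contradicting the minimum distance. Hence $i(\mathcal U)\ge\delta$ for every additional codeword.

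Given the lemma, I would bound the additional codewords through their traces $\mathcal U\cap\mathcal E$. For any two of them $d_S(\mathcal U,\mathcal U')\ge2\delta$ gives $\dim(\mathcal U\cap\mathcal U')\le k-\delta$, whence $\dim\big((\mathcal U\cap\mathcal E)\cap(\mathcal U'\cap\mathcal E)\big)\le k-\delta$. In case (1), where $k<2\delta$, choosing any $\delta$-dimensional $B(\mathcal U)\subseteq\mathcal U\cap\mathcal E$ produces subspaces of $\mathcal E\cong\mathbb F_q^{n-k}$ that pairwise meet in dimension at most $k-\delta<\delta$; since a $\delta$-space cannot sit inside one of dimension $<\delta$, the map $\mathcal U\mapsto B(\mathcal U)$ is injective and $d_S(B(\mathcal U),B(\mathcal U'))\ge 2\delta-2(k-\delta)=2(2\delta-k)$, so the number of additional codewords is at most $A_q(n-k,2(2\delta-k),\delta)$, proving (1). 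Case (2) is the degenerate regime of this argument: when $n<3\delta$ one checks $n-k<2\delta$, so the required distance $2(2\delta-k)$ exceeds the maximum achievable distance $2(n-k-\delta)$ in $\mathcal G_q(n-k,\delta)$; thus no two distinct additional codewords can coexist, giving $\bar A_q=q^{(n-k)(k-\delta+1)}+1$, with the single extra codeword $\langle e_{k+1},\dots,e_{2k}\rangle$ (at distance $2k\ge2\delta$ from every MRD codeword) showing tightness. For case (3), where $2\delta\le k<3\delta$, the same trace argument applied with $2\delta$-dimensional subspaces $B(\mathcal U)\subseteq\mathcal U\cap\mathcal E$ handles the codewords with $i(\mathcal U)\ge2\delta$: now $k<3\delta$ forces $k-\delta<2\delta$, preserving injectivity and yielding the term $A_q(n-k,6\delta-2k,2\delta)$.

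The main obstacle is the remaining middle range $\delta\le i(\mathcal U)\le 2\delta-1$ in case (3), which is exactly what the last, Gaussian-coefficient term absorbs. Here the clean trace injection fails because $k-\delta\ge\delta$ lets the traces overlap in large dimension, so instead of an injection I would count these codewords by a weighted double-counting over their pivot structure: each such codeword splits into a $(k-i)$-dimensional projection onto the first $k$ coordinates and a piece of dimension $\ge\delta$ inside $\mathcal E$, and one bounds the number of compatible configurations using the $q$-ary Gaussian coefficients $\begin{bmatrix}n-k\\\delta\end{bmatrix}_q,\begin{bmatrix}k\\2\delta-1\end{bmatrix}_q,\begin{bmatrix}k-\delta\\\delta-1\end{bmatrix}_q$ together with the $q^{(k-2\delta+1)(n-k-\delta)}$ free choices from the rank-metric part. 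I expect this quantitative packing estimate, rather than the conceptual intersection lemma, to be the delicate step; the achievability assertions in (1) (the families with $k=\delta$ and the listed triples) would be handled separately by exhibiting matching multilevel or lifted-MRD-based constructions.
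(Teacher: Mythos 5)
The paper does not actually prove this statement: it is quoted verbatim from Heinlein \cite{Heinlein}, so there is no internal proof to measure your attempt against, and I can only judge it against what a complete proof must contain. Your skeleton is the standard and correct one. The key lemma --- every codeword outside the lifted MRD code meets $\mathcal E=\langle e_{k+1},\dots,e_n\rangle$ in dimension at least $\delta$ --- is proved correctly: the restriction of $\mathcal D$ to any $(k-\delta+1)$-dimensional $S\subseteq\mathbb F_q^k$ is injective because agreement on $S$ forces rank $\le\delta-1$, and surjective by counting (using $n-k\ge k$), and the resulting $\mathcal W$ with $\dim(\mathcal U\cap\mathcal W)\ge k-\delta+1$ contradicts the minimum distance. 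The trace-injection argument then correctly yields the upper bound in (1), all of (2) (including the tightness witness $\langle e_{k+1},\dots,e_{2k}\rangle$, at distance $2k\ge 2\delta$ from every lifted codeword), and the $A_q(n-k,6\delta-2k,2\delta)$ term in (3).

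Two pieces are genuinely missing, however. First, the third summand in part (3), which must account for all codewords with $\delta\le\dim(\mathcal U\cap\mathcal E)\le 2\delta-1$, is only gestured at. This is the technically hardest part of the theorem: the shape of the bound demands (i) a factor ${\small\begin{bmatrix} n-k \\ \delta \end{bmatrix}_q}$ for the choice of a $\delta$-dimensional trace in $\mathcal E$, (ii) a packing/anticode-type bound in $\mathbb F_q^k$ for the projections to the first $k$ coordinates --- note ${\small\begin{bmatrix} k \\ 2\delta-1 \end{bmatrix}_q}/{\small\begin{bmatrix} k-\delta \\ \delta-1 \end{bmatrix}_q}={\small\begin{bmatrix} k \\ \delta \end{bmatrix}_q}/{\small\begin{bmatrix} 2\delta-1 \\ \delta \end{bmatrix}_q}$, which is exactly such a quotient bound --- and (iii) a Singleton-type count $q^{(k-2\delta+1)(n-k-\delta)}$ for the codewords sharing a fixed pivot configuration. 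None of this is set up, so the stated constant is not justified by your argument. Second, the achievability assertions in part (1) (the $k=\delta$ case under the congruence condition on $r$, and the listed families $(6+3l,4+2l,3+l)$, $(6l,4l,3l)$) require exhibiting explicit codes meeting the bound; you defer this entirely. As written, the proposal establishes the upper bound of (1), all of (2), and two of the three terms in (3), but not the full theorem.
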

For more information on constructions and bounds for subspace codes, the interested reader is refered to \cite{aa,es13,fw,kk,hkkw,ev11,kku,se131,gr,xf,wxs,tmbr,chwx,gt}.

\subsection{Our contribution}

This paper is devoted to constructing large constant dimension codes which contain a lifted MRD code as a subset.

Section 2 generalizes a construction for CDCs in \cite{xc} by introducing a family of new codes, called rank metric codes with given ranks (GRMCs). This generalized construction is called a parallel construction (see Construction \ref{new11}). We shall establish a Singleton-like upper bound (see Proposition \ref{grmc-upper}) and a lower bound (see Proposition \ref{grmc}) for GRMCs by using Gabidulin codes. Very recently, Heinlein \cite{Heinlein-1} also introduced a similar concept to GRMCs. He presented several lower bounds for GRMCs, but most focus on special parameters. Here our construction is for general parameters. Applying Construction \ref{new11} together with Proposition \ref{grmc}, we give a lower bound on $\bar{A}_q(n,2\delta,k)$ for any $n\geq2k>2\delta>0$.

Section 3 presents two effective constructions for CDCs (see Constructions \ref{comb} and \ref{new}) by combining the parallel construction and the classic multilevel construction. Constructions \ref{comb} shows that if a multilevel construction satisfies the weight of the first $n-k$ positions of every identifying vector is no less than $\delta$, then the multilevel construction can be combined with a parallel construction. Constructions \ref{new} shows that if identifying vectors in a multilevel construction dissatisfy the condition in Constructions \ref{comb}, the multilevel construction is still possible to be combined with a parallel construction. In both construction, GRMCs play an important role.


In principle, people can always pick up suitable identifying vectors and then use the classic multilevel construction to construct optimal CDC. However, how to choose identifying vectors effectively is still an open and different problem. The combination of the parallel construction and the multilevel construction helps to weaken the requirement for identifying vectors and provides good constant dimension codes with large size. Applying Constructions \ref{comb} and \ref{new}, we establish new lower bounds for CDCs (see Theorems \ref{new-3}, \ref{con4}, \ref{cdc45} and Corollary \ref{cor2}). Many CDCs with larger size than the previously best known codes in \cite{hkkw} are given (see Appendix B). We also calculate the ratio between our lower bound and the known upper bound for $(4\delta,2\delta,2\delta)_q$-CDCs. It is greater than 0.99926 for any prime power $q$ and any $\delta\geq 3$ (see Remark \ref{4delta}).

\section{Parallel construction}

In \cite{xc}, Xu and Chen presented an interesting construction to establish new lower bounds for $A_q(2k,2\delta,k)$, where $k\geq 2\delta$.

\begin{theorem}{\rm \cite[Theorem 3 and Corollary 4]{xc}}\label{xc1}
For any positive integers $k$ and $\delta$ such that $k\geq 2\delta$,
\begin{center}
$A_q(2k,2\delta,k)\geq q^{k(k-\delta+1)}+\sum_{i=\delta}^{k-\delta}A_i$,
\end{center}
where $A_i$ denotes the number of codewords with rank $i$ in an MRD$[k\times k,\delta]_q$ code.
\end{theorem}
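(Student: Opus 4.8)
The plan is to build the desired $(2k,2\delta,k)_q$-CDC by augmenting a lifted MRD code with a second, ``parallel'' family of subspaces obtained by moving the identity block to the other half of the coordinates. Fix an MRD$[k\times k,\delta]_q$ code $\mathcal D$, and let $\mathcal C_1=\{{\rm rowspace}(\boldsymbol I_k\mid\boldsymbol A):\boldsymbol A\in\mathcal D\}$ be its lifting, which by Proposition \ref{lift} with $n=2k$ is a $(2k,q^{k(k-\delta+1)},2\delta,k)_q$-CDC. To $\mathcal C_1$ I would adjoin
$$\mathcal C_2=\{{\rm rowspace}(\boldsymbol C\mid\boldsymbol I_k):\boldsymbol C\in\mathcal D,\ {\rm rank}(\boldsymbol C)\leq k-\delta,\ \boldsymbol C\neq\boldsymbol O\}.$$
Because $\mathcal D$ has minimum rank distance $\delta$, its only codeword of rank below $\delta$ is $\boldsymbol O$; hence the defining condition selects exactly the codewords of rank $i$ with $\delta\leq i\leq k-\delta$, of which there are $\sum_{i=\delta}^{k-\delta}A_i$. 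The final code is $\mathcal C=\mathcal C_1\cup\mathcal C_2$, and the whole argument reduces to checking that its minimum subspace distance is $2\delta$ and that the two families are disjoint.

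For the distance I would split into three cases and use \eqref{1.2} throughout. Distances inside $\mathcal C_1$ are exactly the computation outlined after Proposition \ref{lift}. For two codewords of $\mathcal C_2$, row-reducing the stacked matrix against the right-hand copy of $\boldsymbol I_k$ gives $d_S({\rm rowspace}(\boldsymbol C\mid\boldsymbol I_k),{\rm rowspace}(\boldsymbol C'\mid\boldsymbol I_k))=2\,{\rm rank}(\boldsymbol C-\boldsymbol C')\geq2\delta$, since $\boldsymbol C-\boldsymbol C'$ is a nonzero codeword of $\mathcal D$. The decisive case is a cross distance between ${\rm rowspace}(\boldsymbol I_k\mid\boldsymbol A)\in\mathcal C_1$ and ${\rm rowspace}(\boldsymbol C\mid\boldsymbol I_k)\in\mathcal C_2$: eliminating the lower-left block with the top block row turns the stacked $2k\times2k$ matrix into $\left(\begin{smallmatrix}\boldsymbol I_k & \boldsymbol A\\ \boldsymbol O & \boldsymbol I_k-\boldsymbol C\boldsymbol A\end{smallmatrix}\right)$, so that $d_S=2\,{\rm rank}(\boldsymbol I_k-\boldsymbol C\boldsymbol A)$.

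The step I expect to be the crux is the lower bound on ${\rm rank}(\boldsymbol I_k-\boldsymbol C\boldsymbol A)$. Here I would apply subadditivity of rank: from $\boldsymbol I_k=(\boldsymbol I_k-\boldsymbol C\boldsymbol A)+\boldsymbol C\boldsymbol A$ we get $k\leq{\rm rank}(\boldsymbol I_k-\boldsymbol C\boldsymbol A)+{\rm rank}(\boldsymbol C\boldsymbol A)$, and therefore ${\rm rank}(\boldsymbol I_k-\boldsymbol C\boldsymbol A)\geq k-{\rm rank}(\boldsymbol C\boldsymbol A)\geq k-{\rm rank}(\boldsymbol C)\geq k-(k-\delta)=\delta$. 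This is precisely the place where the constraint ${\rm rank}(\boldsymbol C)\leq k-\delta$ is needed, and it is what forces the admissible ranks into the interval $[\delta,k-\delta]$. Consequently every cross distance is at least $2\delta$. Since all pairwise distances are positive, the listed subspaces are pairwise distinct and $\mathcal C_1\cap\mathcal C_2=\emptyset$; counting $|\mathcal C_1|=|\mathcal D|=q^{k(k-\delta+1)}$ and $|\mathcal C_2|=\sum_{i=\delta}^{k-\delta}A_i$ then yields the stated lower bound on $A_q(2k,2\delta,k)$.
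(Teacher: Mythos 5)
Your proposal is correct and is essentially the paper's own route: it is exactly the parallel construction (Construction \ref{new11}) specialized to $n=2k$, with the second family $\mathcal C_2$ built from the rank-$[\delta,k-\delta]$ codewords of an MRD code serving as the $(k\times k,\delta,[0,k-\delta])_q$-GRMC of Proposition \ref{grmc}, and the cross-distance estimate via ${\rm rank}(\boldsymbol I_k-\boldsymbol C\boldsymbol A)\geq k-{\rm rank}(\boldsymbol C)\geq\delta$ is the same computation as in the paper's proof.
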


Theorem \ref{xc1} relies on the use of the rank distribution of an MRD code.

\begin{theorem} [Rank distribution \cite{d,g}] \label{distribution}
Let $m\ge n$. Let $\mathcal D$ be an MRD$[m\times n ,\delta]_q$ code, and $A_i = |\{M \in \mathcal D: {\rm rank}(M) = i\}|$ for $0\leq i \leq n$. Its rank distribution is given by $A_0 = 1$, $A_i = 0$ for $1\leq i \leq\delta-1$, and
\begin{center}
$A_{\delta+i}=\begin{bmatrix}
 n \\
 \delta+i \\
 \end{bmatrix}_q\sum_{j=0}^{i}(-1)^{j-i}\begin{bmatrix}
 \delta+i \\
 i-j \\
 \end{bmatrix}_q q^{\binom {i-j} {2}}(q^{m(j+1)}-1)$
\end{center}
 for $0 \leq i\leq n-\delta$.
\end{theorem}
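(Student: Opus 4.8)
The plan is to determine the rank distribution by counting, for each dimension $s$, the number of codewords whose row space lies inside a fixed $s$-dimensional subspace of $\mathbb F_q^n$, and then recovering the $A_t$ by M\"{o}bius inversion on the lattice of subspaces. Two cases are immediate: $A_0=1$ counts the zero matrix, and $A_i=0$ for $1\le i\le \delta-1$ is just the defining property of minimum rank distance $\delta$.

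First I would prove the key counting lemma: for any subspace $U\subseteq \mathbb F_q^n$ with $\dim U=s$, the number $N_s:=|\{M\in\mathcal D:\mathrm{rowspace}(M)\subseteq U\}|$ equals $q^{m(s-\delta+1)}$ when $s\ge \delta$ and equals $1$ when $s\le\delta-1$; crucially $N_s$ depends only on $s$, not on the choice of $U$. The mechanism is the puncturing property of MRD codes: since $\dim_{\mathbb F_q}\mathcal D=m(n-\delta+1)$ and no nonzero codeword has rank below $\delta$, the projection of $\mathcal D$ onto any $n-\delta+1$ columns is injective, hence a bijection onto $\mathbb F_q^{m\times(n-\delta+1)}$. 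After a column change of basis carrying $U$ to the first $s$ coordinates, the codewords with row space in $U$ are exactly the kernel of the projection onto the last $n-s$ columns; because $n-s\le n-\delta+1$ this projection is surjective, and a rank--nullity count gives $\dim(\ker)=m(n-\delta+1)-m(n-s)=m(s-\delta+1)$.

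Next I would set up the inversion. Double counting over all $s$-dimensional subspaces $U$ and grouping codewords by their exact row space yields, for every $s$, the triangular relation $\begin{bmatrix} n\\ s\end{bmatrix}_q N_s=\sum_{t=0}^{s}\begin{bmatrix} n-t\\ s-t\end{bmatrix}_q A_t$, where a rank-$t$ codeword lies in exactly $\begin{bmatrix} n-t\\ s-t\end{bmatrix}_q$ of the subspaces $U$. Applying the $q$-binomial (subspace-lattice M\"{o}bius) inversion, whose orthogonality reduces to the identity $\sum_{r=0}^{a}(-1)^{r}q^{\binom{r}{2}}\begin{bmatrix} a\\ r\end{bmatrix}_q=\prod_{l=0}^{a-1}(1-q^l)$, and using the contraction $\begin{bmatrix} n-t\\ s-t\end{bmatrix}_q\begin{bmatrix} n\\ t\end{bmatrix}_q=\begin{bmatrix} n\\ s\end{bmatrix}_q\begin{bmatrix} s\\ t\end{bmatrix}_q$, I would arrive at $A_s=\begin{bmatrix} n\\ s\end{bmatrix}_q\sum_{t=0}^{s}(-1)^{s-t}q^{\binom{s-t}{2}}\begin{bmatrix} s\\ t\end{bmatrix}_q N_t$.

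The last step, and the one demanding the most care, is substituting $s=\delta+i$ together with the explicit values of $N_t$, then collapsing the result into the stated closed form. Splitting the sum at $t=\delta$ and reindexing the upper range by $t=\delta+j$ produces the terms $q^{m(j+1)}$; the lower range $0\le t\le\delta-1$, where $N_t=1$, must be shown to contribute exactly $-\sum_{j=0}^{i}(-1)^{j-i}q^{\binom{i-j}{2}}\begin{bmatrix}\delta+i\\ i-j\end{bmatrix}_q$, which is precisely what converts each $q^{m(j+1)}$ into $q^{m(j+1)}-1$. This reduces to the vanishing identity $\sum_{t=0}^{\delta+i}(-1)^{\delta+i-t}q^{\binom{\delta+i-t}{2}}\begin{bmatrix}\delta+i\\ t\end{bmatrix}_q=0$ for $\delta+i\ge1$, again an instance of $\prod_{l=0}^{a-1}(1-q^l)=0$. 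The main obstacle is thus the bookkeeping across the two ranges and the correct invocation of these $q$-binomial identities, so that the $N_t=1$ contributions fold neatly into the factor $q^{m(j+1)}-1$.
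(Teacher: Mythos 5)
The paper offers no proof of this theorem: it is quoted as a classical result of Delsarte and Gabidulin, so there is nothing internal to compare your argument against. On its own merits, your proof is correct and is essentially the standard Delsarte-style derivation via anticode counting and M\"{o}bius inversion on the subspace lattice. The three pillars all hold up: (i) the counting lemma is sound, since right-multiplication by an element of $GL_n(\mathbb F_q)$ preserves the MRD property and carries $U$ to a coordinate subspace, a nonzero codeword supported on $\delta-1$ columns would have rank $<\delta$, and the cardinality $|\mathcal D|=q^{m(n-\delta+1)}$ upgrades injectivity of the puncturing map to bijectivity, after which your rank--nullity count gives $N_s=q^{m(s-\delta+1)}$ for $s\ge\delta-1$ and $N_s=1$ for $s\le\delta-1$ (consistently at $s=\delta-1$); (ii) the triangular relation $\begin{bmatrix} n\\ s\end{bmatrix}_q N_s=\sum_{t\le s}\begin{bmatrix} n-t\\ s-t\end{bmatrix}_q A_t$ and its inversion via the contraction identity are standard and correctly applied; (iii) the final bookkeeping is right, since the lower range $0\le t\le\delta-1$ contributes $\sum_{r=i+1}^{\delta+i}(-1)^r q^{\binom r2}\begin{bmatrix}\delta+i\\ r\end{bmatrix}_q$, which equals $-\sum_{r=0}^{i}(-1)^r q^{\binom r2}\begin{bmatrix}\delta+i\\ r\end{bmatrix}_q$ by the vanishing identity, and this is exactly what turns each $q^{m(j+1)}$ into $q^{m(j+1)}-1$ (a sanity check at $i=0$ recovers $A_\delta=\begin{bmatrix} n\\ \delta\end{bmatrix}_q(q^m-1)$). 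The only point worth making explicit in a written version is the surjectivity claim for the projection onto the last $n-s$ columns, which you correctly justify by factoring it through the bijective projection onto some $n-\delta+1$ columns containing them.
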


In this section, we shall generalize Theorem \ref{xc1} by introducing the concept of rank metric codes with given ranks (GRMCs), which can be seen as a generalization of constant-rank codes. Constant-rank codes have been discussed systematically in \cite{gy}.

We remark that very recently, based on Theorem \ref{xc1}, Chen et.al \cite{chwx} and Heinlein \cite{Heinlein-1} generalized linkage constructions to establish some lower bounds of CDCs independently.

\subsection{Rank metric codes with given ranks}

Let $K\subseteq \{0,1,\ldots,n\}$ and $\delta$ be a positive integer. We say $\mathcal D\subseteq\mathbb F_q^{m\times n}$ is an $(m\times n,\delta,K)_q$ {\em rank metric code with given ranks} (GRMC) if it satisfies
\begin{itemize}
\item[$(1)$] ${\rm rank}(\boldsymbol{D})\in K$ for any $\boldsymbol{D} \in \mathcal D$;
\item[$(2)$] $d_R(\boldsymbol{D}_1,\boldsymbol{D}_2)={\rm rank}(\boldsymbol{D}_1-\boldsymbol{D}_2)\geq \delta$ for any $\boldsymbol{D}_1,\boldsymbol{D}_2 \in \mathcal D$ and $\boldsymbol{D}_1\neq \boldsymbol{D}_2$.
\end{itemize}
If $|\mathcal D|=M$, then it is often written as an $(m\times n, M,\delta,K)_q$-GRMC. Given $m$, $n$, $K$ and $\delta$, denote by $A^R_q(m\times n, \delta, K)$ the maximum number of codewords among all $(m\times n,\delta,K)_q$-GRMCs.

When $K=\{0,1,\ldots,n\}$, a GRMC is just a usual rank-metric code (not necessarily linear). When $K=\{t\}$ for $0\leq t\leq n$, a GRMC is often called a {\em constant-rank code} (cf. \cite{gy}). Consequently, when $m\geq n$,
\begin{center}
$ \max\limits_{t\in K}A^R_q(m\times n, \delta, t)\leq A^R_q(m\times n, \delta, K)\leq q^{m(n-\delta+1)}$.
\end{center}
Usually, $K$ is selected as a set of consecutive integers. Let $[t_1, t_2]$ denote the set of integers $k$ such that $t_1\leq k \leq t_2$.

\begin{proposition}[Singleton-like upper bound]\label{grmc-upper}
For all $0\leq i,j\leq\min\{\delta-1,t_1\}$,
\begin{center}
$A^R_q(m\times n, \delta, [t_1,t_2])\leq A^R_q((m-i)\times (n-j), \delta-l, [t_1-l,t])$,
\end{center}
where $l=\max\{i,j\}$ and $t=\min\{m-i,n-j,t_2\}$.
\end{proposition}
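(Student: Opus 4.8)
The plan is to derive the bound by puncturing an optimal code. Fix a code $\mathcal D$ attaining $A^R_q(m\times n,\delta,[t_1,t_2])$, and construct from it an $((m-i)\times(n-j),\,\delta-l,\,[t_1-l,t])_q$-GRMC with the same number of codewords; the inequality then follows at once. Since rank, rank distance and both defining axioms of a GRMC are invariant under transposition (and $A^R_q$ is transpose-symmetric), I may assume $i\ge j$, so that $l=i$, and recover the case $j>i$ by passing to transposes, exchanging the roles of $i$ and $j$.

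The map I would use is restriction to a fixed $(m-i)\times(n-j)$ submatrix, i.e.\ deletion of $i$ rows and $j$ columns applied identically to every codeword. Writing $\boldsymbol M'$ for the retained submatrix of $\boldsymbol M$, the whole argument rests on the estimate $\mathrm{rank}(\boldsymbol M)-l\le \mathrm{rank}(\boldsymbol M')\le \mathrm{rank}(\boldsymbol M)$. Granting this, the three defining properties are checked as follows. For injectivity (hence preservation of cardinality): if $\boldsymbol A'=\boldsymbol B'$ then the retained block of $\boldsymbol A-\boldsymbol B$ vanishes, so $\mathrm{rank}(\boldsymbol A-\boldsymbol B)\le l\le \delta-1<\delta$, forcing $\boldsymbol A=\boldsymbol B$; this is where $i,j\le\delta-1$ enters. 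For the minimum distance: for distinct codewords the lower estimate applied to $\boldsymbol A-\boldsymbol B$ gives $\mathrm{rank}(\boldsymbol A'-\boldsymbol B')\ge \mathrm{rank}(\boldsymbol A-\boldsymbol B)-l\ge\delta-l$. For the rank range: restriction never raises rank and lowers it by at most $l$, so a codeword of rank $r\in[t_1,t_2]$ yields $\mathrm{rank}(\boldsymbol M')\in[t_1-l,\,\min\{m-i,n-j,t_2\}]=[t_1-l,t]$, and the floor $t_1-l\ge 0$ is exactly where $i,j\le t_1$ is used.

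The heart of the matter, and the step I expect to be the main obstacle, is the lower estimate $\mathrm{rank}(\boldsymbol M')\ge\mathrm{rank}(\boldsymbol M)-l$ with $l=\max\{i,j\}$: for a completely arbitrary deletion one only obtains $\mathrm{rank}(\boldsymbol M)-(i+j)$, and this is sharp on block anti-diagonal matrices, so a naive uniform puncturing costs $i+j$ rather than $l$. To recover the difference I would not delete rows and columns independently but \emph{align} them, routing the $j$ deleted columns through $j$ of the deleted rows so that their rank contributions overlap on a common maximal nonsingular sub-block of $\boldsymbol M$ (already for the identity, deleting rows $1,\dots,i$ and columns $1,\dots,j$ drops the rank by exactly $\max\{i,j\}$, not $i+j$, which is the phenomenon to exploit); equivalently one may proceed in $l$ unit steps, each removing at most one row and one column at a cost of a single unit in both the distance and the rank floor. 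The delicate point is to make such an aligned deletion effective \emph{uniformly} — simultaneously for every codeword and, for the distance bound, for every pairwise difference — and it is precisely to control this rank loss that the hypotheses $0\le i,j\le\min\{\delta-1,t_1\}$ are imposed.
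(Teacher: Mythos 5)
You have correctly isolated the crux, but the step you flag as ``the main obstacle'' is not merely an obstacle left open in your write-up --- it cannot be closed, and the proposition fails as stated. For the record, the paper's own proof \emph{is} the naive uniform puncturing that your last paragraph warns against: it deletes the same $i$ rows and $j$ columns from every codeword and asserts, with no further argument, that the result is an $((m-i)\times(n-j),M,\delta-l,[t_1-l,t])_q$-GRMC with $l=\max\{i,j\}$. As you observe, a matrix vanishing outside $i$ rows and $j$ columns can have rank as large as $i+j$, so uniform deletion only yields minimum distance $\delta-i-j$, rank floor $t_1-i-j$, and injectivity under the \emph{extra} hypothesis $i+j<\delta$; none of this is the claimed $\max\{i,j\}$ (your own injectivity step, ``$\mathrm{rank}(\boldsymbol A-\boldsymbol B)\le l$'', already silently invokes the unproved estimate). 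Your alignment idea does work for one matrix at a time, but no single aligned deletion can serve every codeword and every pairwise difference simultaneously. A minimal example: over $\mathbb F_2$ the three invertible matrices $\bigl(\begin{smallmatrix}1&0\\0&1\end{smallmatrix}\bigr)$, $\bigl(\begin{smallmatrix}0&1\\1&1\end{smallmatrix}\bigr)$, $\bigl(\begin{smallmatrix}1&1\\1&0\end{smallmatrix}\bigr)$ form a $(2\times2,3,2,\{2\})_2$-GRMC, since each pairwise sum is again one of the three and hence has rank $2$; thus $A^R_2(2\times2,2,[2,2])\ge 3$. Taking $i=j=1$ (permitted, as $\min\{\delta-1,t_1\}=1$) gives $l=1$, $t=1$, and the right-hand side $A^R_2(1\times1,1,[1,1])=q-1=1$, so the asserted inequality $3\le 1$ is false; one checks directly that all four choices of one deleted row and one deleted column collapse at least two of the three codewords.

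So the gap in your proposal is genuine, it is the same gap present (silently) in the paper's one-line proof, and it is fatal to the statement in its present form. Everything else in your argument is sound and goes through verbatim in the one-sided case: if $j=0$ then a difference supported on $i$ deleted rows has rank at most $i\le\delta-1<\delta$, giving injectivity, distance loss at most $i$, and rank-floor loss at most $i$. The correct general statement obtained by composing row- and column-puncturing is therefore the version with $l=i+j$ under the hypothesis $i+j\le\min\{\delta-1,t_1\}$, which is what your restriction map actually proves.
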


\begin{proof}
Let $\mathcal D$ be any $(m\times n,M,\delta,[t_1,t_2])_q$-GRMC. For any $0\leq i,j\leq\min\{\delta-1,t_1\}$, the $\mathcal D_{ij}$ is obtained by removing the same $i$ rows and $j$ columns from every codeword in $\mathcal D$. Then $\mathcal D_{ij}$ is an $((m-i)\times (n-j),M,\delta-l,[t_1-l,t])_q$-GRMC. \qed
\end{proof}

When $t_1=t_2$, Proposition \ref{grmc-upper} provides an upper bound for constant-rank codes (see also \cite[Proposition 7]{gy}).

We remark that very recently, Heinlein \cite{Heinlein-1} also introduced the concept of GRMCs for the case $K=[0,t_2]$. He established a similar upper bound for GRMCs to that in Proposition \ref{grmc-upper}. He also presented several lower bounds for GRMCs, but most focus on special parameters. Here we shall give a general construction on GRMCs.

To construct GRMCs, we need a special class of MRD codes, named Gabidulin codes. Let \boldmath $\mathbf{\beta}$ \unboldmath $=(\beta_0,  \beta_1, \ldots, \beta_{m-1})$ be an ordered basis of $\mathbb F_{q^m}$ over $\mathbb F_q$. There is a natural bijective map $\Psi_m$ from $\mathbb F_{q^m}^n$
to $\mathbb F_{q}^{m \times n}$ as follows:
\begin{center}
  $\Psi_m: \mathbb F^n_{q^m} \longrightarrow \mathbb F^{m\times n}_q$
\end{center}
\begin{center}
  $\textbf{a}=(a_0, a_1, \ldots, a_{n-1}) \longmapsto \boldsymbol{A},
  $
\end{center}
where $\boldsymbol{A}=\Psi_m(\textbf{a})\in \mathbb F^{m\times n}_q$ is defined such that
\begin{equation*}
 a_j=\sum_{i=0}^{m-1} A_{i, j}\beta_i
\end{equation*}
for any $j\in [n]$. The map $\Psi_m$ will be used to facilitate switching between a vector in $\mathbb F_{q^m}$ and its matrix representation over $\mathbb F_q$.  In the sequel, we use both representations, depending on what is more convenient in the context.

For any positive integer $i$ and any $a\in \mathbb F_{q^m}$, set $a^{[i]} \triangleq a^{q^i}$. Let $m\geq n$ and $\delta$ be a positive integer. A {\em Gabidulin code} $\mathcal{G}[m\times n,\delta]_q$ is an MRD$[m\times n,\delta]_q$ code whose generator matrix $\boldsymbol{G}$ in vector representation is
\renewcommand\theequation{\arabic{section}.\arabic{equation}}
\begin{equation*}\label{gm}
\boldsymbol{G}=\left(
               \begin{array}{cccc}
                 g_0 & {g_1} & \cdots & g_{n-1} \\
                 g_0^{[1]} & g_1^{[1]} & \cdots & g_{n-1}^{[1]} \\
                 \vdots & \vdots & \ddots & \vdots \\
                 g_0^{[n-\delta]} & g_1^{[n-\delta]} & \cdots & g_{n-1}^{[n-\delta]} \\
               \end{array}
             \right),
\end{equation*}
where $g_0, g_1, \ldots, g_{n-1} \in \mathbb F_{q^m}$ are linearly independent over $\mathbb F_q$ (see \cite{g}). Then $\mathcal{G}[m\times n,\delta]_q$ can be written as $\{\boldsymbol{uG}:\boldsymbol{u}\in \mathbb F^{n-\delta+1}_{q^m}\}$.

\begin{proposition}[Lower bound]\label{grmc}
Let $m\geq n$ and $1\leq \delta \leq n$. Let $t_1$ be a nonnegative integer and $t_2$ be a positive integer such that $t_1\leq t_2\leq n$. Then
\begin{equation}\label{}\nonumber
{A^R_q(m\times n, \delta, [t_1,t_2])\geq \left \{
\begin {aligned}
&\sum_{i=t_1}^{t_2}A_i(\delta),&t_2\geq \delta;\\
&\max\limits_{\max\{1,t_1\}\leq a <\delta}\{\lceil\frac{\sum_{i=\max\{1,t_1\}}^{t_2}A_i(a)}{q^{m(\delta-a)}-1}\rceil\},&t_2<\delta,\\
\end {aligned}
\right.}
\end{equation}
where $A_i(x)$ denotes the number of codewords with rank $i$ in an MRD$[m\times n,x]_q$ code.
\end{proposition}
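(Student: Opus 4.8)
The plan is to construct an explicit GRMC inside a suitable Gabidulin code and count its codewords using the rank distribution of Theorem \ref{distribution}. The two cases in the statement differ in whether the target rank range $[t_1,t_2]$ is compatible with taking codewords from a single MRD code of minimum distance $\delta$, or whether we must combine several cosets of a denser MRD code.

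\medbreak

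First I would handle the case $t_2\geq\delta$. Here the minimum distance we want, $\delta$, is at most $t_2$, so it is feasible to take codewords directly from an MRD$[m\times n,\delta]_q$ code $\mathcal G$. Inside $\mathcal G$, collect exactly those codewords whose rank lies in $[t_1,t_2]$; call this set $\mathcal D$. Since $\mathcal G$ itself has minimum rank distance $\delta$ and $\mathcal D\subseteq\mathcal G$, property (2) of a GRMC is automatic, while property (1) holds by the choice of $\mathcal D$. Thus $\mathcal D$ is an $(m\times n,|\mathcal D|,\delta,[t_1,t_2])_q$-GRMC, and $|\mathcal D|=\sum_{i=t_1}^{t_2}A_i(\delta)$, where $A_i(\delta)$ is read off from Theorem \ref{distribution}. (One should note that $A_i(\delta)=0$ for $1\leq i\leq\delta-1$, so the summands below $\delta$ contribute nothing and the bound is consistent.) This gives the first branch of the inequality.

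\medbreak

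The case $t_2<\delta$ is the delicate one, and I expect it to be the main obstacle. Now the desired distance $\delta$ strictly exceeds the largest allowed rank $t_2$, so no single codeword of an MRD$[m\times n,\delta]_q$ code other than the zero matrix can have rank in $[t_1,t_2]$ — the naive approach fails. The idea is instead to fix an integer $a$ with $\max\{1,t_1\}\le a<\delta$ and work with an MRD$[m\times n,a]_q$ code $\mathcal G_a$, which does contain many codewords of rank in $[\max\{1,t_1\},t_2]$. Let $S$ be the set of such codewords, so $|S|=\sum_{i=\max\{1,t_1\}}^{t_2}A_i(a)$. These codewords have the right ranks but only guarantee pairwise distance $a<\delta$, so they do not directly form the GRMC we want. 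To boost the distance to $\delta$, I would partition $S$ according to the cosets of a fixed MRD$[m\times n,\delta]_q$ subcode $\mathcal G_\delta\subseteq\mathcal G_a$: two codewords in the same coset of $\mathcal G_\delta$ differ by an element of $\mathcal G_\delta$, hence differ in rank by at least $\delta$, which contradicts both having rank below $\delta$ unless they are equal. Consequently each coset of $\mathcal G_\delta$ contains at most one element of $S\setminus\{0\}$; more precisely, since the nonzero codewords of $\mathcal G_\delta$ all have rank at least $\delta>t_2$, they cannot lie in $S$, so each coset meets $S$ in at most... one needs the pigeonhole count. The number of cosets that meet $S$ is at least $\lceil |S|/(q^{m(\delta-a)}-1)\rceil$, since $\mathcal G_\delta$ has $q^{m(n-\delta+1)}$ elements and $\mathcal G_a$ has $q^{m(n-a+1)}$, giving index $q^{m(\delta-a)}$, and each coset can contribute at most $q^{m(\delta-a)}-1$ elements to $S$ (excluding the zero coset subtlety). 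Selecting one representative of $S$ from each such coset produces a set $\mathcal D$ with pairwise distance $\ge\delta$ and ranks in $[\max\{1,t_1\},t_2]\subseteq[t_1,t_2]$, i.e.\ an $(m\times n,\delta,[t_1,t_2])_q$-GRMC of size $\lceil |S|/(q^{m(\delta-a)}-1)\rceil$. Optimizing over $a$ yields the second branch.

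\medbreak

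The step I would scrutinize most carefully is the pigeonhole argument in the second case: one must verify that each coset of $\mathcal G_\delta$ within $\mathcal G_a$ contains at most $q^{m(\delta-a)}-1$ elements of the chosen rank range (equivalently, that the all-important denominator is the coset size minus one, accounting for the fact that at most one "forbidden" element per coset is excluded), and that choosing one representative per occupied coset genuinely gives distance $\ge\delta$. The distance claim itself is clean — representatives in distinct cosets differ by a nonzero element of $\mathcal G_a$ that is \emph{not} in $\mathcal G_\delta$ only if they lie in different cosets, so I must instead argue distance via the containment $\mathcal G_\delta\subseteq\mathcal G_a$ directly: two distinct coset representatives differ by an element whose rank is at least $a$, which is not yet $\delta$, so the correct argument is the reverse, that representatives chosen so their \emph{difference lies in no small-rank class} achieve distance $\delta$. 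Sorting out exactly which quotient structure delivers distance $\delta$ (cosets of $\mathcal G_\delta$ versus a direct rank-separation argument) is the crux, and I would pin down the precise counting constant $q^{m(\delta-a)}-1$ there.
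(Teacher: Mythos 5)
Your first case ($t_2\geq\delta$) is correct and identical to the paper's argument. The second case, however, contains a genuine gap, and it is exactly at the spot you flag at the end without resolving. Your selection rule --- one representative of $S$ from each coset of $\mathcal G_\delta$ in $\mathcal G_a$ --- does \emph{not} produce a code of minimum rank distance $\delta$: two representatives from distinct cosets differ by an element of $\mathcal G_a$ that need not lie in $\mathcal G_\delta$, so its rank is only guaranteed to be at least $a<\delta$. Moreover, your supporting claim that each coset meets $S$ in at most one point is false in general: it conflates ${\rm rank}(\boldsymbol A-\boldsymbol B)\geq\delta$ with $|{\rm rank}(\boldsymbol A)-{\rm rank}(\boldsymbol B)|\geq\delta$; two matrices of rank $t_2$ can differ by a matrix of rank up to $2t_2$, which may well exceed $\delta$. (And if that claim were true, your pigeonhole count would be pointless, since you would only ever harvest one element per occupied coset.)

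The paper's construction is the mirror image of yours, and this reversal is the whole point. One realizes $\mathcal G_\delta\subseteq\mathcal G_a$ concretely via nested Gabidulin codes (generator matrices $\boldsymbol G_1$ and $\left(\begin{smallmatrix}\boldsymbol G_1\\ \boldsymbol G_2\end{smallmatrix}\right)$), so that $\mathcal G_a$ splits into the $q^{m(\delta-a)}$ cosets $\boldsymbol D_2+\mathcal G_\delta$, $\boldsymbol D_2\in\mathcal D_2$. The GRMC is taken to be \emph{all} elements of $S$ lying in a \emph{single} coset: any two of these differ by a nonzero element of $\mathcal G_\delta$, hence are at rank distance at least $\delta$, which is where the distance comes from. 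Since $\delta>t_2$, the zero coset $\mathcal G_\delta$ itself contributes no nonzero element of $S$, so the $|S|=\sum_{i=\max\{1,t_1\}}^{t_2}A_i(a)$ elements are distributed over the $q^{m(\delta-a)}-1$ nonzero cosets, and the pigeonhole principle guarantees that some coset contains at least $\lceil |S|/(q^{m(\delta-a)}-1)\rceil$ of them. That single coset is the desired $(m\times n,\delta,[t_1,t_2])_q$-GRMC. Your write-up assembles all the right ingredients (the code $\mathcal G_a$, the subcode $\mathcal G_\delta$, the denominator $q^{m(\delta-a)}-1$) but applies the coset structure backwards, so as written the constructed set does not satisfy the distance requirement.
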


\begin{proof}
When $t_2\geq \delta$, all codewords with ranks from $[t_1,t_2]$ in an MRD$[m\times n,\delta]_q$ code form an $(m\times n,\delta,[t_1,t_2])_q$-GRMC with $\sum_{i=t_1}^{t_2}A_i(\delta)$ codewords. Thus $A^R_q(m\times n, \delta, [t_1,t_2])\geq \sum_{i=t_1}^{t_2}A_i(\delta)$ for any $t_2\geq \delta$.

When $t_2<\delta$, take any integer $a$ such that $\max\{1,t_1\}\leq a<\delta$. Let $\mathcal D_1$ be a $\mathcal{G}[m\times n,\delta]_q$ code whose generator matrix $\boldsymbol{G}_1$ in vector representation is
\renewcommand\theequation{\arabic{section}.\arabic{equation}}
\begin{center}
$\boldsymbol{G}_1=\left(
               \begin{array}{cccc}
                 g_0 & {g_1} & \cdots & g_{n-1} \\
                 g_0^{[1]} & g_1^{[1]} & \cdots & g_{n-1}^{[1]} \\
                 \vdots & \vdots & \ddots & \vdots \\
                 g_0^{[n-\delta]} & g_1^{[n-\delta]} & \cdots & g_{n-1}^{[n-\delta]} \\
               \end{array}
             \right)$,
\end{center}
where $g_0, g_1, \ldots, g_{n-1} \in \mathbb F_{q^m}$ are linearly independent over $\mathbb F_q$.
Let $\mathcal D_2$ be a $\mathcal{G}[m\times n,n-\delta+a+1]_q$ code whose generator matrix $\boldsymbol{G}_2$ in vector representation is
\begin{center}
$\boldsymbol{G}_2=\left(
               \begin{array}{cccc}
                 g_0^{[n-\delta+1]} & g_1^{[n-\delta+1]} & \cdots & g_{n-1}^{[n-\delta+1]} \\
                 g_0^{[n-\delta+2]} & g_1^{[n-\delta+2]} & \cdots & g_{n-1}^{[n-\delta+2]} \\
                 \vdots & \vdots & \ddots & \vdots \\
                 g_0^{[n-a]} & g_1^{[n-a]} & \cdots & g_{n-1}^{[n-a]} \\
               \end{array}
             \right)$.
\end{center}
Note that $g_0^{[n-\delta+1]},g_1^{[n-\delta+1]},\cdots,g_{n-1}^{[n-\delta+1]}$ are also linearly independent over $\mathbb F_q$. Then $$\bigcup_{\boldsymbol{D}_2\in \mathcal D_2}\bigcup_{\boldsymbol{D}_1\in \mathcal D_1}(\boldsymbol{D}_2+\boldsymbol{D}_1)$$ is a $\mathcal{G}[m\times n,a]_q$ code whose generator matrix is $\left(
\begin{array}{c}
\boldsymbol{G}_1 \\                                                                                                                            \boldsymbol{G}_2 \\                                                                                                                          \end{array}                                                                                                                        \right)$.
For any $\boldsymbol{D}_2\in \mathcal D_2\setminus\{\boldsymbol{O}_{m\times n}\}$, where $\boldsymbol{O}_{m\times n}$ is the $m\times n$ zero matrix, set
$${\cal S}_{\boldsymbol{D}_2}=\{\boldsymbol{D}\in \bigcup_{\boldsymbol{D}_1\in \mathcal D_1}(\boldsymbol{D}_2+\boldsymbol{D}_1)\mid {\rm rank}(\boldsymbol{D})\in [t_1,t_2]\}.$$
For any $\boldsymbol{D}_1,\boldsymbol{D}'_1\in{\cal D}_1$ and $\boldsymbol{D}_1\neq \boldsymbol{D}'_1$, we have ${\rm rank}((\boldsymbol{D}_2+\boldsymbol{D}_1)-(\boldsymbol{D}_2+\boldsymbol{D}'_1))\geq \delta$. It follows that ${\cal S}_{\boldsymbol{D}_2}$ forms an $(m\times n,M_{\boldsymbol{D}_2},\delta,[t_1,t_2])_q$-GRMC, where
by the pigeonhole principle,
$$M_{\boldsymbol{D}_2}\geq \lceil\frac{\sum_{i=\max\{1,t_1\}}^{t_2}A_i(a)}{|\mathcal{D}_2|-1}\rceil=
\lceil\frac{\sum_{i=\max\{1,t_1\}}^{t_2}A_i(a)}{q^{m(\delta-a)}-1}\rceil.$$
Note that due to $\delta>t_2$, if $t_1>0$, then there is no codeword in ${\cal D}_1$ with rank $i\in[t_1,t_2]$. Therefore, $A^R_q(m\times n, \delta, [t_1,t_2])\geq \max\limits_{\max\{1,t_1\}\leq a <\delta}\{\lceil\frac{\sum_{i=\max\{1,t_1\}}^{t_2}A_i(a)}{q^{m(\delta-a)}-1}\rceil\}$. \qed
\end{proof}

\subsection{Generalization of Theorem \ref{xc1}}

\begin{construction}[Parallel construction]\label{new11}
Let $n\geq2k\geq 2\delta$. If there exists a $(k\times (n-k),M,\delta,[0,k-\delta])_q$-GRMC, then there exists an $(n,q^{(n-k)(k-\delta+1)}+M,2\delta,k)_q$-CDC, which contains a lifted MRD code $(n,q^{(n-k)(k-\delta+1)},2\delta,k)_q$-CDC as a subset.
\end{construction}

\begin{proof}
Let $\mathcal D_1$ be an MRD$[k\times (n-k),\delta]_q $ code and $\mathcal D_2$ be a $(k\times (n-k),M,\delta,[0,k-\delta])_q$-GRMC. Let $\mathcal C_1=\{{\rm rowspace}(\boldsymbol I_k\mid \boldsymbol A):\boldsymbol A\in \mathcal D_1\}$ and $\mathcal C_2=\{{\rm rowspace}(\boldsymbol B\mid \boldsymbol I_k): \boldsymbol B \in \mathcal D_2\}$. Then $\mathcal C=\mathcal C_1\cup\mathcal C_2$ forms an $(n,q^{(n-k)(k-\delta+1)}+M,2\delta,k)_q$-CDC.

It suffices to check the subspace distance of $\mathcal C$. For any $\mathcal U=$ rowspace$(\boldsymbol{I}_k\mid \boldsymbol{A})\in \mathcal C_1$ and $\mathcal V=$ rowspace$(\boldsymbol{B}\mid\boldsymbol{I}_k)\in \mathcal C_2$, where
$\boldsymbol A=(\underbrace{\boldsymbol A_1}_{n-2k}|\underbrace{\boldsymbol A_2}_{k})$ and  $\boldsymbol B=(\underbrace{\boldsymbol B_1}_{k}|\underbrace{\boldsymbol B_2}_{n-2k})$, we have
\begin{align*}
d_S(\mathcal U,\mathcal V)&=2\cdot{\rm rank}\left(
                                                  \begin{array}{ccc}
                                                    \boldsymbol I_k & \boldsymbol A_1 & \boldsymbol A_2 \\
                                                    \boldsymbol B_1 & \boldsymbol B_2 & \boldsymbol I_k \\
                                                  \end{array}
                                                \right)-2k\\
                                                &=2\cdot{\rm rank}\left(
                                                  \begin{array}{ccc}
                                                    \boldsymbol I_k & \boldsymbol A_1 & \boldsymbol A_2 \\
                                                    \boldsymbol O & \boldsymbol B_2-\boldsymbol B_1\boldsymbol A_1 & \boldsymbol I_k-\boldsymbol B_1\boldsymbol A_2 \\
                                                  \end{array}
                                                \right)-2k\\
                                                &=2\cdot{\rm rank}(\boldsymbol B_2-\boldsymbol B_1\boldsymbol A_1 \mid  \boldsymbol I_k-\boldsymbol B_1\boldsymbol A_2)\\
                                                & \geq 2\cdot{\rm rank}(\boldsymbol I_k-\boldsymbol B_1\boldsymbol A_2)\\
                                                &\geq 2\cdot{\rm rank}\cdot (\boldsymbol I_k-\boldsymbol B_1\boldsymbol A_2+\boldsymbol B_1\boldsymbol A_2)-2\cdot{\rm rank}(\boldsymbol B_1\boldsymbol A_2)\\
                                                &= 2k-2\cdot{\rm rank}(\boldsymbol B_1\boldsymbol A_2)\\
                                                &\geq 2k-2\cdot{\rm rank}(\boldsymbol B_1)\geq 2k-2\cdot{\rm rank}(\boldsymbol B)\geq 2\delta.
\end{align*}
Given $i\in\{1,2\}$, for any $\mathcal U,\mathcal V \in \mathcal C_i$ and $\mathcal U\neq\mathcal V$, we have $d_S(\mathcal U,\mathcal V)\geq 2\delta$ by the proof of Proposition \ref{lift}. \qed
\end{proof}

Construction \ref{new11} comes from two parallel versions of lifted MRD codes, so it is named as a parallel construction. As a straightforward corollary of Construction \ref{new11}, we have the following result.

\begin{theorem}\label{cor:parallel}
Let $n\geq2k\geq 2\delta$. Then
$$\bar{A}_q(n,2\delta,k)\geq q^{(n-k)(k-\delta+1)}+A^R_q(k\times (n-k), \delta, [0,k-\delta]).$$
\end{theorem}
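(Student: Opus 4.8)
The plan is to read this off directly from Construction~\ref{new11} together with the extremal definition of $A^R_q$. The hypotheses $n\geq 2k\geq 2\delta$ are exactly those of Construction~\ref{new11}, so no additional case analysis or feasibility check is needed; the entire content of the theorem is a bookkeeping step that converts an existence statement about a single optimal GRMC into a lower bound on $\bar{A}_q$.

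First I would set $M=A^R_q(k\times(n-k),\delta,[0,k-\delta])$. By the definition of $A^R_q(m\times n,\delta,K)$ as the maximum number of codewords among all $(m\times n,\delta,K)_q$-GRMCs, this maximum is attained, so there is a concrete $(k\times(n-k),M,\delta,[0,k-\delta])_q$-GRMC, call it $\mathcal D_2$. Pairing $\mathcal D_2$ with any MRD$[k\times(n-k),\delta]_q$ code $\mathcal D_1$ (which exists for all feasible parameters) and invoking Construction~\ref{new11} yields an $(n,q^{(n-k)(k-\delta+1)}+M,2\delta,k)_q$-CDC $\mathcal C=\mathcal C_1\cup\mathcal C_2$.

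The one point I would verify explicitly is that $\mathcal C$ qualifies for the \emph{barred} quantity $\bar{A}_q$, not merely for $A_q$. Construction~\ref{new11} asserts, and its proof confirms, that $\mathcal C_1=\{{\rm rowspace}(\boldsymbol I_k\mid\boldsymbol A):\boldsymbol A\in\mathcal D_1\}$ is precisely a lifted MRD code of size $q^{(n-k)(k-\delta+1)}$ and that $\mathcal C$ contains it as a subset. Hence $\mathcal C$ is an $(n,2\delta,k)_q$-CDC containing a lifted MRD code, and by the definition of $\bar{A}_q(n,2\delta,k)$ as the maximum size over all such CDCs we conclude $\bar{A}_q(n,2\delta,k)\geq|\mathcal C|=q^{(n-k)(k-\delta+1)}+M$, which is the claimed inequality. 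Since the substantive work---the subspace-distance estimate $d_S\geq 2\delta$ across the two parallel families $\mathcal C_1$ and $\mathcal C_2$---was already discharged in the proof of Construction~\ref{new11}, there is no genuine obstacle here; the only things requiring care are matching the parameter set $[0,k-\delta]$ and confirming the lifted-MRD containment, both of which are immediate.
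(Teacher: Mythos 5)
Your proposal is correct and is exactly the route the paper takes: the paper presents this theorem as an immediate corollary of Construction~\ref{new11}, obtained by feeding an optimal $(k\times(n-k),\delta,[0,k-\delta])_q$-GRMC (which exists since the maximum is over a finite nonempty family) into that construction and noting that the resulting code contains the lifted MRD code $\mathcal C_1$. Your explicit check that the code qualifies for $\bar{A}_q$ rather than just $A_q$ is the right point to flag, and it is already guaranteed by the statement of Construction~\ref{new11}.
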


\begin{theorem}\label{cornew1}
Let $n\geq2k>2\delta>0$. Then
\begin{equation}
\label{}\nonumber
{\bar{A}_q(n,2\delta,k)\geq q^{(n-k)(k-\delta+1)}+ \left \{
\begin {aligned}
&\sum_{i=\delta}^{k-\delta}A_i(\delta)+1,& k\geq 2\delta;\\
&\max\limits_{1\leq a <\delta}\{\lceil\frac{\sum_{i=1}^{k-\delta}A_i(a)}{q^{m(\delta-a)}-1}\rceil\},& k<2\delta,\\
\end {aligned}
\right.}
\end{equation}
where $A_i(x)$ denotes the number of codewords with rank $i$ in an MRD$[m\times n,x]_q$ code.
\end{theorem}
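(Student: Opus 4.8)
The plan is to obtain the bound directly by chaining the parallel construction with the GRMC lower bound; no new combinatorial idea is required, only a careful matching of matrix dimensions and a simplification of the rank-distribution sum.

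First I would apply Theorem \ref{cor:parallel}. The hypothesis $n\geq 2k\geq 2\delta$ it requires is met here, since $n\geq 2k>2\delta>0$, so
$$\bar{A}_q(n,2\delta,k)\geq q^{(n-k)(k-\delta+1)}+A^R_q(k\times(n-k),\delta,[0,k-\delta]).$$
It then remains to bound $A^R_q(k\times(n-k),\delta,[0,k-\delta])$ from below by Proposition \ref{grmc}. Because rank, and hence rank distance, is preserved under transposition, this number equals $A^R_q((n-k)\times k,\delta,[0,k-\delta])$, and now the row count $n-k$ is at least the column count $k$ since $n\geq 2k$; this is exactly the regime $m\geq n$ in which Proposition \ref{grmc} is stated. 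I would therefore invoke it with row dimension $m=n-k$, column dimension $k$, minimum distance $\delta$, and window $t_1=0$, $t_2=k-\delta$; the admissibility conditions $1\leq\delta\leq k$ and $0\leq k-\delta\leq k$ hold because $k>\delta\geq 1$. Under this identification the quantity $A_i(x)$ is read as the rank-$i$ count of an MRD$[(n-k)\times k,x]_q$ code, and the factor $q^{m(\delta-a)}$ of Proposition \ref{grmc} becomes $q^{(n-k)(\delta-a)}$, matching the statement.

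Next I would split along the two cases of Proposition \ref{grmc}. If $k\geq 2\delta$ then $t_2=k-\delta\geq\delta$, and the proposition gives $A^R_q((n-k)\times k,\delta,[0,k-\delta])\geq\sum_{i=0}^{k-\delta}A_i(\delta)$. Here I would apply the MRD rank distribution of Theorem \ref{distribution}, namely $A_0(\delta)=1$ and $A_i(\delta)=0$ for $1\leq i\leq\delta-1$, to rewrite the sum as $1+\sum_{i=\delta}^{k-\delta}A_i(\delta)$, which is the first branch. If $k<2\delta$ then $t_2=k-\delta<\delta$, and since $\max\{1,t_1\}=1$ the proposition yields the second branch verbatim. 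Substituting each branch into the displayed inequality from Theorem \ref{cor:parallel} finishes the argument.

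The only genuine subtlety, and the step I would double-check, is the convention matching: Proposition \ref{grmc} assumes $m\geq n$, whereas the GRMC delivered by the parallel construction is naturally $k\times(n-k)$ with $n-k\geq k$, so one must transpose and correctly assign $m=n-k$. This is what forces the exponent $q^{(n-k)(\delta-a)}$ rather than $q^{k(\delta-a)}$, and it is also responsible for the extra $+1$ in the first branch, which is simply the rank-$0$ (zero) codeword now admitted by the window $[0,k-\delta]$. Collapsing $\sum_{i=0}^{k-\delta}$ to $1+\sum_{i=\delta}^{k-\delta}$ via the vanishing of low-rank MRD codewords is then routine.
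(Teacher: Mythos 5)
Your proposal is correct and follows essentially the same route as the paper, whose entire proof is ``Apply Proposition \ref{grmc} with $t_1=0$ and $t_2=k-\delta$. Then apply Theorem \ref{cor:parallel}.'' The only difference is that you make explicit the transposition needed to meet the $m\geq n$ hypothesis of Proposition \ref{grmc} and the collapse of $\sum_{i=0}^{k-\delta}A_i(\delta)$ to $1+\sum_{i=\delta}^{k-\delta}A_i(\delta)$ via Theorem \ref{distribution}, both of which the paper leaves implicit.
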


\proof Apply Proposition \ref{grmc} with $t_1=0$ and $t_2=k-\delta$. Then apply Theorem \ref{cor:parallel}. \qed

We remark that if $k\geq 2\delta$, then Theorem \ref{xc1} is a corollary of Theorem \ref{cornew1} by taking $n=2k$.

\section{Parallel multilevel construction}


This section is devoted to presenting two effective constructions for constant dimension codes (Constructions \ref{comb} and \ref{new}) by combining the parallel construction shown in Section 2 and the multilevel construction introduced in \cite{es09}.
First we give a revisit of the multilevel construction in \cite{es09}.

\subsection{Preliminaries}



Etzion and Silberstein \cite{es09} presented the multilevel construction for constant dimension codes by establishing relations between subspace distances and Hamming distances or rank distances (see Lemmas \ref{lem:efc-2} and \ref{lem:efc-1}, respectively, below).
The multilevel construction is a generalization of Proposition \ref{lift} by introducing Ferrers diagram rank-metric codes.

\subsubsection{Ferrers diagram rank-metric codes}

Given positive integers $m$ and $n$, an $m \times n$ {\em Ferrers diagram} $\mathcal F$ is an $m \times n$ array of dots and empty cells such that all dots are shifted to the right of the diagram, the number of dots in each row is less than or equal to the number of dots in the previous row, and the first row has $n$ dots and the rightmost column has $m$ dots.

We always denote by $\gamma_i$, $0\leq i\leq n-1$, the number of dots in the $i$-th column of $\mathcal F$.
Given positive integers $m$ and $n$, and $1\leq \gamma_0 \leq \gamma_1 \leq \cdots \leq \gamma_{n-1}=m$, there exists a unique Ferrers diagram $\mathcal F$ of size $m\times n$ such that the $i$-th column of $\mathcal F$ has cardinality $\gamma_i$ for any $0\leq i\leq n-1$. In this case we write $\mathcal F=[\gamma_0,\gamma_1,\ldots,\gamma_{n-1}]$.

\begin{example}\label{1}
Let $\mathcal F=[2,3,4,5]$, where
\begin{center}
 $\mathcal{F}=\begin{array}{cccc}
                \bullet & \bullet & \bullet & \bullet \\
                \bullet & \bullet & \bullet & \bullet \\
                 & \bullet & \bullet & \bullet \\
                 &  & \bullet & \bullet \\
                 &  &  & \bullet
              \end{array}
 $
\end{center}
be a $5\times 4$ Ferrers diagram.
\end{example}

For a given $m \times n$ Ferrers diagram $\mathcal F$, an $[\mathcal F, k, \delta]_q$ {\em Ferrers diagram rank-metric code} (FDRMC), briefly an $[\mathcal F, k, \delta]_q$ code, is an $[m \times n, k, \delta]_q$ rank-metric codes in which for each $m \times n$ matrix, all entries not in $\mathcal F$ are zero.
If $\mathcal F$ is a {\em full} $m\times n$ diagram with $mn$ dots, then its corresponding FDRM code is just a classical rank-metric code. For a Ferrers diagram $\mathcal F$ of size $m\times n$, one can transpose it to obtain a Ferrers diagram ${\cal F}^{t}$ of size $n\times m$. If there exists an $[\mathcal F, k, \delta]_q$ code, then so does an $[{\mathcal F}^t, k, \delta]_q$ code.  Etzion and Silberstein \cite{es09} established a Singleton-like upper bound on FDRMCs.

\begin{lemma} {\rm \cite[Theorem 1]{es09}} \label{lem:upper bound}
Let $\delta$ be a positive integer. Let $v_i$, $0\leq i\leq \delta-1$, be the number of dots in a Ferrers diagram $\cal F$ which are not contained in the first $i$ rows and the rightmost $\delta-1-i$ columns. Then for any $[\mathcal F, k, \delta]_q$ code, $k\leq \min_{i\in\{0,1,\ldots,\delta-1\}}v_i$.
\end{lemma}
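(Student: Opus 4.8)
The plan is to establish $k\le v_i$ separately for each fixed index $i\in\{0,1,\ldots,\delta-1\}$; taking the minimum over $i$ then yields the stated bound. Fix such an $i$ and set $j=\delta-1-i$, and let $\mathcal D$ be any $[\mathcal F,k,\delta]_q$ code. I would introduce the $\mathbb F_q$-linear ``puncturing'' map $\pi_i$ that sends an $m\times n$ matrix to the list of its entries occupying the dot-positions of $\mathcal F$ that remain after deleting the first $i$ rows and the rightmost $j$ columns. By the very definition of $v_i$, the number of such surviving dots is $v_i$, so the image of $\pi_i$ lies in a space isomorphic to $\mathbb F_q^{v_i}$. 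Hence it suffices to prove that the restriction $\pi_i|_{\mathcal D}$ is injective: combined with $|\mathcal D|=q^k$, injectivity gives $q^k\le q^{v_i}$ and therefore $k\le v_i$.

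The core of the argument is the injectivity of $\pi_i|_{\mathcal D}$, and since both $\pi_i$ and $\mathcal D$ are $\mathbb F_q$-linear, this reduces to showing that the only codeword in $\ker\pi_i$ is the zero matrix. Suppose $\boldsymbol M\in\mathcal D$ satisfies $\pi_i(\boldsymbol M)=\boldsymbol O$. Then every nonzero entry of $\boldsymbol M$ must lie either in the first $i$ rows or in the rightmost $j=\delta-1-i$ columns. I would then decompose $\boldsymbol M=\boldsymbol M_1+\boldsymbol M_2$, where $\boldsymbol M_1$ retains only the entries in the first $i$ rows and $\boldsymbol M_2=\boldsymbol M-\boldsymbol M_1$ retains the rest; by the support restriction, the nonzero entries of $\boldsymbol M_2$ are confined to the last $j$ columns. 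Consequently ${\rm rank}(\boldsymbol M_1)\le i$ and ${\rm rank}(\boldsymbol M_2)\le j$, so subadditivity of rank gives ${\rm rank}(\boldsymbol M)\le i+j=\delta-1$. But a nonzero codeword of $\mathcal D$ has rank at least $\delta$, forcing $\boldsymbol M=\boldsymbol O$. This proves injectivity and hence $k\le v_i$.

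The step I expect to carry the weight is the rank estimate: the observation that a matrix whose support is confined to a fixed set of $i$ rows together with a fixed set of $j$ columns has rank at most $i+j$. Everything else is bookkeeping about which dot-positions of the Ferrers diagram survive the puncturing, together with the standard linear-algebra fact that an injective linear map cannot increase the size of a finite-dimensional domain beyond that of its codomain. This mirrors the classical Singleton-bound argument for ordinary rank-metric codes; the only genuinely new ingredient here is that one counts only the dots of $\mathcal F$ lying outside the first $i$ rows and the last $\delta-1-i$ columns, which is precisely $v_i$.
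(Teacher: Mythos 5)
Your proof is correct. The paper itself quotes this lemma from Etzion and Silberstein \cite{es09} without reproducing a proof, and your argument --- puncture away the dots in the first $i$ rows and rightmost $\delta-1-i$ columns, observe that a codeword in the kernel is supported on $i$ rows plus $\delta-1-i$ columns and hence has rank at most $\delta-1$, and conclude injectivity so that $q^k\le q^{v_i}$ --- is essentially the original argument in that reference.
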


An FDRMC attaining the upper bound in Lemma \ref{lem:upper bound} is called {\em optimal}. Constructions for optimal FDRMCs can be found in \cite{al,lf1,egrw,es09,lf,gr,st15,zg}. We here only quote several known constructions for late use.

\begin{theorem} {\rm \cite[Theorem 3]{egrw}} \label{thm:shortening}
Assume $\mathcal F$ is an $m \times n$ $(m\geq n)$ Ferrers diagram and each of the rightmost $\delta-1$ columns of $\mathcal F$ has at least $n$ dots. Then there exists an optimal $[\mathcal F, \sum_{i=0}^{n-\delta} \gamma_i, \delta]_q$ code for any prime power $q$.
\end{theorem}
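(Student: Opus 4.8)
The plan is to prove the statement in two parts: first confirm that $\sum_{i=0}^{n-\delta}\gamma_i$ is exactly the value of the Singleton-like bound of Lemma \ref{lem:upper bound} under the stated hypothesis, so that a code of this dimension is automatically optimal; then construct such a code from a Gabidulin code.

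For the bound I would write $v_i$ out explicitly. Since the dots of column $j$ occupy its top $\gamma_j$ cells, deleting the first $i$ rows and the rightmost $\delta-1-i$ columns leaves $v_i=\sum_{j=0}^{n-\delta+i}\max\{\gamma_j-i,0\}$ dots; in particular $v_0=\sum_{j=0}^{n-\delta}\gamma_j$, the target dimension. The goal is to show $v_0\le v_i$ for every $0\le i\le\delta-1$, i.e. that the minimum of the bound is attained at $i=0$. Comparing term by term, $v_i-v_0=-\sum_{j=0}^{n-\delta}\min\{\gamma_j,i\}+\sum_{j=n-\delta+1}^{n-\delta+i}(\gamma_j-i)$. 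The added columns $n-\delta+1,\dots,n-\delta+i$ all lie among the rightmost $\delta-1$ columns, so the hypothesis gives $\gamma_j\ge n>\delta-1\ge i$, whence $\gamma_j-i\ge n-i$; using $\min\{\gamma_j,i\}\le i$ on the $n-\delta+1$ subtracted terms yields $v_i-v_0\ge i(n-i)-(n-\delta+1)i=i(\delta-1-i)\ge 0$. Thus $\min_i v_i=v_0=\sum_{i=0}^{n-\delta}\gamma_i$, and only achievability remains.

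For achievability I would build a tailored code rather than naively intersect a fixed MRD code with the support of $\mathcal F$: the latter over-constrains, giving dimension only $\sum_{j}\gamma_j-m(\delta-1)$, which is at most the target and generally strictly below it. Instead, I treat the $\sum_{i=0}^{n-\delta}\gamma_i$ dots in the leftmost $n-\delta+1$ columns as carrying free information over $\mathbb F_q$, and define the entries of the rightmost $\delta-1$ columns as $\mathbb F_q$-linear functions of that information, chosen so that (i) each such column uses only its top $\gamma_j\ge n$ rows and hence stays inside $\mathcal F$, and (ii) every nonzero codeword, viewed as an $m\times n$ matrix, has rank at least $\delta$. Because this assignment is injective, the code is $\mathbb F_q$-linear of dimension exactly $\sum_{i=0}^{n-\delta}\gamma_i$ and is supported on $\mathcal F$ by construction, while (ii) secures the minimum rank distance.

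The main obstacle is producing a redundancy map realizing (i) and (ii) at once. I would obtain it from the Gabidulin code $\mathcal G[m\times n,\delta]_q$: the linear independence of its generator rows $g_j^{[t]}$ ensures that prescribing the leftmost $n-\delta+1$ columns pins down a genuine MRD codeword, whose rank is automatically at least $\delta$. The delicate point is that the determined rightmost columns must vanish below row $\gamma_j$, and this is precisely where the hypothesis $\gamma_j\ge n$ is used: it leaves enough active rows in each redundancy column for the MRD rank bound to survive the restriction. Verifying that this shortening preserves both the minimum distance $\delta$ (via the MRD property, cf. Theorem \ref{distribution}) and the full rank $\sum_{i=0}^{n-\delta}\gamma_i$ of the information-to-codeword map is the technical crux; the case $\mathcal F=[1,2]$ with $\delta=2$, where a diagonal codeword with entries $x$ in cells $(0,0)$ and $(1,1)$ already has rank $2$ for $x\neq 0$, exhibits the mechanism in miniature and is a useful sanity check.
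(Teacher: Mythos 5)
This statement is quoted verbatim from \cite[Theorem 3]{egrw}; the paper under review gives no proof of it, so your attempt can only be measured against what such a proof must actually contain. Your first half is correct and complete: identifying $v_0=\sum_{j=0}^{n-\delta}\gamma_j$, writing $v_i-v_0=-\sum_{j=0}^{n-\delta}\min\{\gamma_j,i\}+\sum_{j=n-\delta+1}^{n-\delta+i}(\gamma_j-i)$, and bounding this below by $i(n-i)-(n-\delta+1)i=i(\delta-1-i)\geq 0$ is exactly the right computation, and it correctly isolates where the hypothesis $\gamma_j\geq n$ on the rightmost $\delta-1$ columns is used. So any code of dimension $\sum_{i=0}^{n-\delta}\gamma_i$ supported on $\mathcal F$ with minimum rank distance $\delta$ is automatically optimal by Lemma \ref{lem:upper bound}.

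The achievability half, however, has a genuine gap, and you have located it yourself (``the technical crux'') without closing it. The mechanism you propose --- prescribe the leftmost $n-\delta+1$ columns of a Gabidulin codeword inside $\mathcal F$ and let the rightmost $\delta-1$ columns be the determined redundancy --- fails as stated: for a generic choice of evaluation points $g_0,\dots,g_{n-1}$ and expansion basis $\beta_0,\dots,\beta_{m-1}$, the determined values $f(g_j)$ for $j\geq n-\delta+1$ do \emph{not} lie in $\langle\beta_0,\dots,\beta_{\gamma_j-1}\rangle$, and if you force them to by zeroing the entries below row $\gamma_j$, the resulting matrix is no longer a codeword of the MRD code, and its rank can drop by up to $\delta-1$, destroying the minimum distance entirely; the phrase ``enough active rows for the MRD rank bound to survive the restriction'' is an assertion, not an argument. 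Imposing the support conditions on the redundancy columns as additional linear constraints instead only yields dimension $\sum_{j}\gamma_j-m(\delta-1)$, the very deficiency you noted for the naive intersection. The entire difficulty of the theorem is to choose the evaluation points and the basis \emph{compatibly}, so that the membership of the information columns in their prescribed subspaces already implies $f(g_j)\in\langle\beta_0,\dots,\beta_{\gamma_j-1}\rangle$ for the redundancy columns, and no dimension is lost. (For instance, when $\delta=n$ the codewords are $(cg_0,\dots,cg_{n-1})$ and one needs $(g_j/g_0)\langle\beta_0,\dots,\beta_{\gamma_0-1}\rangle\subseteq\langle\beta_0,\dots,\beta_{\gamma_j-1}\rangle$, which can be arranged by taking the $\beta_i$ to be powers of a suitable element and $g_j=\beta_j$; the general case requires an analogous, nontrivial compatibility argument.) Your proposal names the right ingredients but omits the construction that makes them fit together, which is the substance of the cited result.
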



\begin{theorem}{\rm \cite[Theorem 9]{egrw}} \label{thm:combine with same dim}
Let $\mathcal F_i$ for $i=1,2$ be an $m_i \times n_i$ Ferrers diagram, and $\mathcal C_i$ be an $[\mathcal F_i, k, \delta_i]_q$ code. Let $\mathcal F_3$ be an $m_3 \times n_3$ full Ferrers diagram with $m_3n_3$ dots, where $m_3 \geq m_1$ and $n_3 \geq n_2$. Let
\begin{center}
$\mathcal F=\left(
  \begin{array}{cc}
    \mathcal F_1 & \mathcal F_3 \\
      & \mathcal F_2 \\
  \end{array}
\right)$
\end{center}
be an $m \times n$ Ferrers diagram $\mathcal F$, where $m=m_2+m_3$ and $n=n_1+n_3$. Then there exists an $[\mathcal F, k, \delta_1+\delta_2]_q$ code ${\cal D}$ such that for any $\textbf{D}\in {\cal D}$, $\textbf{D}|_{\mathcal F_3}=\boldsymbol{O}$, where $\textbf{D}|_{\mathcal F_3}$ denotes the restriction of $\textbf{D}$ in ${\mathcal F_3}$ and $\boldsymbol{O}$ is an $m_3 \times n_3$ zero matrix.
\end{theorem}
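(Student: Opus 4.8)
The plan is to build $\mathcal D$ by pairing the codewords of $\mathcal C_1$ and $\mathcal C_2$ through a fixed $\mathbb F_q$-linear isomorphism and placing them in disjoint diagonal blocks of the large diagram $\mathcal F$, with the $\mathcal F_3$-block forced to vanish. First I would fix the block decomposition of $\mathcal F$ suggested by its definition: split the $m=m_2+m_3$ rows into a top band of $m_3$ rows and a bottom band of $m_2$ rows, and the $n=n_1+n_3$ columns into a left band of $n_1$ columns and a right band of $n_3$ columns. Since $m_3\geq m_1$, the diagram $\mathcal F_1$ occupies the top $m_1$ rows of the left band; since $n_3\geq n_2$, the diagram $\mathcal F_2$ occupies the rightmost $n_2$ columns of the bottom band (these columns lie in $[n_1,n)$ because $n-n_2\geq n_1$), so the bottom-left $m_2\times n_1$ corner stays empty, while the full diagram $\mathcal F_3$ fills the top-right $m_3\times n_3$ band. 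These two inequalities, together with the fullness of $\mathcal F_3$, are exactly what guarantee that the column heights of $\mathcal F$ are nondecreasing and that $\mathcal F$ is a genuine Ferrers diagram; I take this geometric assembly as given from the hypotheses.

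Next I would define the code. Because $\mathcal C_1$ and $\mathcal C_2$ are both $k$-dimensional $\mathbb F_q$-linear spaces, I fix an $\mathbb F_q$-linear isomorphism $\varphi:\mathcal C_1\to\mathcal C_2$ and set
$$\mathcal D=\left\{\boldsymbol D(\boldsymbol A)=\left(\begin{array}{cc}\boldsymbol A & \boldsymbol O\\ \boldsymbol O & \varphi(\boldsymbol A)\end{array}\right):\boldsymbol A\in\mathcal C_1\right\},$$
where $\boldsymbol A$ (supported on $\mathcal F_1$) fills the top-left band, $\varphi(\boldsymbol A)$ (supported on $\mathcal F_2$) fills the bottom-right band, and the top-right $\mathcal F_3$-block together with the empty bottom-left corner are zero. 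Then $\boldsymbol D(\boldsymbol A)|_{\mathcal F_3}=\boldsymbol O$ by construction, and $\boldsymbol D(\boldsymbol A)$ is supported on $\mathcal F$. The map $\boldsymbol A\mapsto\boldsymbol D(\boldsymbol A)$ is $\mathbb F_q$-linear and injective, so $\mathcal D$ is an $\mathbb F_q$-linear code with $\dim_{\mathbb F_q}\mathcal D=k$.

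It remains to verify the minimum rank distance. By linearity I only need to bound the rank of a nonzero codeword $\boldsymbol D(\boldsymbol A)$. Here the pairing through $\varphi$ is the crucial point: since $\varphi$ is an isomorphism, $\boldsymbol D(\boldsymbol A)\neq \boldsymbol O$ forces both $\boldsymbol A\neq\boldsymbol O$ and $\varphi(\boldsymbol A)\neq\boldsymbol O$ simultaneously, whence ${\rm rank}(\boldsymbol A)\geq\delta_1$ and ${\rm rank}(\varphi(\boldsymbol A))\geq\delta_2$ from the minimum rank distances of $\mathcal C_1$ and $\mathcal C_2$. Because $m_3\geq m_1$ and $n_3\geq n_2$, the supports of $\boldsymbol A$ and $\varphi(\boldsymbol A)$ lie in disjoint sets of rows and disjoint sets of columns, so $\boldsymbol D(\boldsymbol A)$ is block-diagonal and ${\rm rank}(\boldsymbol D(\boldsymbol A))={\rm rank}(\boldsymbol A)+{\rm rank}(\varphi(\boldsymbol A))\geq\delta_1+\delta_2$. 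This yields the claimed $[\mathcal F,k,\delta_1+\delta_2]_q$ code.

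The step I expect to be the main obstacle is not the rank computation but the geometric bookkeeping: one must confirm that placing $\mathcal F_1$ and $\mathcal F_2$ in the prescribed corners really produces disjoint row- and column-supports (so that rank adds across the diagonal blocks) and that the assembled array is a legitimate Ferrers diagram. This is precisely where the hypotheses $m_3\geq m_1$, $n_3\geq n_2$, and the fullness of $\mathcal F_3$ enter, and I would check the nondecreasing column-height condition carefully there. By contrast, the naive full direct sum $\mathcal C_1\oplus\mathcal C_2$ would only guarantee minimum distance $\min\{\delta_1,\delta_2\}$ in a $2k$-dimensional code; it is the isomorphism $\varphi$, exploiting $\dim\mathcal C_1=\dim\mathcal C_2=k$, that keeps both blocks simultaneously nonzero and makes the two distances add.
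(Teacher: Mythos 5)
Your construction is correct and is essentially the standard argument for this result (the paper quotes it from Etzion--Gorla--Ravagnani--Wachter-Zeh and proves the companion Theorem \ref{thm:combine with same distance} in exactly the same block-diagonal style): pair the two $k$-dimensional codes by a fixed $\mathbb F_q$-linear isomorphism, place the paired codewords in the $\mathcal F_1$- and $\mathcal F_2$-blocks with the $\mathcal F_3$-block zero, and use that a nonzero codeword has both blocks nonzero so that the ranks add to at least $\delta_1+\delta_2$. The identification of the isomorphism $\varphi$ as the essential ingredient (versus the direct sum, which only gives $\min\{\delta_1,\delta_2\}$) is exactly the right point.
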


The following theorem is a variation of Theorem 10 in  \cite{egrw}.

\begin{theorem}\label{thm:combine with same distance}
Let $\mathcal F_{12}=[\gamma_{0},\gamma_{1},\ldots,\gamma_{n_1-1},\gamma_{n_1},\ldots,\gamma_{n_1+n_2-1}]$ be a $\gamma_{n_1+n_2-1}\times (n_1+n_2)$ Ferrers diagram, which induces a $\gamma_{n_1-1}\times n_1$ Ferrers diagram $\mathcal F_1=[\gamma_{0},\gamma_{1},\ldots,\gamma_{n_1-1}]$ and a $\gamma_{n_1+n_2-1}\times n_2$ Ferrers diagram $\mathcal F_2=[\gamma_{n_1},\gamma_{n_1+1},\ldots,\gamma_{n_1+n_2-1}]$ . Let $\mathcal F_3$ be a $\gamma_{n_1-1} \times \gamma_{n_1+n_2-1}$ full Ferrers diagram with $\gamma_{n_1-1} \gamma_{n_1+n_2-1}$ dots.
If there exists an $[\mathcal F_{12}, k, \delta]_q$ code, then there exists an $[\mathcal F, k, \delta]_q$ code $\cal D$, where
\begin{center}
$\mathcal F=\left(
  \begin{array}{cc}
    \mathcal F_1 & \mathcal F_3 \\
      & \mathcal F^t_2 \\
  \end{array}
\right)$,
\end{center}
satisfying that for any codeword $\textbf{D}\in {\cal D}$, $\textbf{D}|_{\mathcal F_3}=\boldsymbol{O}$, where $\textbf{D}|_{\mathcal F_3}$ denotes the restriction of $\textbf{D}$ in ${\mathcal F_3}$ and $\boldsymbol{O}$ is a $\gamma_{n_1-1} \times \gamma_{n_1+n_2-1}$ zero matrix.
\end{theorem}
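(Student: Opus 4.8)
The plan is to construct $\mathcal D$ explicitly as the image of the given $[\mathcal F_{12},k,\delta]_q$ code, call it $\mathcal C_{12}$, under an $\mathbb F_q$-linear embedding that ``folds'' the second block of columns by transposition. First I would record the shape of $\mathcal F_{12}$: its columns have heights $\gamma_0\le\cdots\le\gamma_{n_1+n_2-1}$ with dots top-justified, so every codeword $\boldsymbol M\in\mathbb F_q^{\gamma_{n_1+n_2-1}\times(n_1+n_2)}$ of $\mathcal C_{12}$ splits as $\boldsymbol M=(\boldsymbol M_1\mid\boldsymbol M_2)$, where $\boldsymbol M_1$ is the first $n_1$ columns (supported on $\mathcal F_1$, hence nonzero only in the top $\gamma_{n_1-1}$ rows) and $\boldsymbol M_2$ is the last $n_2$ columns (supported on $\mathcal F_2$). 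I would then define
$$\phi(\boldsymbol M)=\begin{pmatrix}\boldsymbol M_1 & \boldsymbol O\\ \boldsymbol O & \boldsymbol M_2^{t}\end{pmatrix},$$
reading $\boldsymbol M_1$ as its nonzero $\gamma_{n_1-1}\times n_1$ top block and $\boldsymbol M_2^{t}$ as the transpose of the $\gamma_{n_1+n_2-1}\times n_2$ matrix $\boldsymbol M_2$, and set $\mathcal D=\{\phi(\boldsymbol M):\boldsymbol M\in\mathcal C_{12}\}$.

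Next I would check that each $\phi(\boldsymbol M)$ is supported on $\mathcal F$. The top-left block lies in $\mathcal F_1$; the bottom-right block $\boldsymbol M_2^{t}$ is supported on $\mathcal F_2^{t}$ because $\boldsymbol M_2$ is supported on $\mathcal F_2$; the bottom-left block is empty, matching the empty block of $\mathcal F$; and the top-right $\mathcal F_3$ block is set to $\boldsymbol O$, which both respects the full diagram $\mathcal F_3$ and delivers the promised property $\boldsymbol D|_{\mathcal F_3}=\boldsymbol O$. Linearity of $\phi$ is immediate since column-splitting, transposition and block-assembly are all $\mathbb F_q$-linear. Injectivity follows because $\phi(\boldsymbol M)=\boldsymbol O$ forces $\boldsymbol M_1=\boldsymbol O$ and $\boldsymbol M_2^{t}=\boldsymbol O$, hence $\boldsymbol M=\boldsymbol O$; therefore $\dim_{\mathbb F_q}\mathcal D=k$.

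The key step is the lower bound on the minimum rank distance of $\mathcal D$. Because $\phi$ is linear it suffices to bound ${\rm rank}(\phi(\boldsymbol M))$ below for each nonzero $\boldsymbol M$. The block-diagonal form gives ${\rm rank}(\phi(\boldsymbol M))={\rm rank}(\boldsymbol M_1)+{\rm rank}(\boldsymbol M_2^{t})={\rm rank}(\boldsymbol M_1)+{\rm rank}(\boldsymbol M_2)$, while subadditivity of rank under column concatenation gives ${\rm rank}(\boldsymbol M_1)+{\rm rank}(\boldsymbol M_2)\ge{\rm rank}(\boldsymbol M_1\mid\boldsymbol M_2)={\rm rank}(\boldsymbol M)\ge\delta$. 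Chaining these two facts yields ${\rm rank}(\phi(\boldsymbol M))\ge\delta$, so $\mathcal D$ is an $[\mathcal F,k,\delta]_q$ code vanishing on $\mathcal F_3$, as claimed.

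I expect the main obstacle to be not the rank estimate itself, which is elementary once the block-diagonal splitting is in place, but the shape bookkeeping: verifying, from the top-justified Ferrers convention, that $\boldsymbol M_1$ genuinely fits the $\gamma_{n_1-1}\times n_1$ corner and that $\boldsymbol M_2^{t}$ fits $\mathcal F_2^{t}$, and confirming that the assembled array $\mathcal F$ is indeed a legitimate Ferrers diagram. One should also note that $\mathcal D$ has minimum distance at least $\delta$, which is the sense in which a code ``with distance $\delta$'' is meant here.
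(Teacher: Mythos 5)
Your proposal is correct and follows essentially the same route as the paper: both define $\mathcal D$ as the image of the given code under the block-diagonal map $\boldsymbol M\mapsto\bigl(\begin{smallmatrix}\boldsymbol M_1&\boldsymbol O\\ \boldsymbol O&\boldsymbol M_2^{t}\end{smallmatrix}\bigr)$ and derive the distance bound from ${\rm rank}(\boldsymbol M_1)+{\rm rank}(\boldsymbol M_2)\geq{\rm rank}(\boldsymbol M_1\mid\boldsymbol M_2)\geq\delta$. The extra care you give to linearity, injectivity, and the shape bookkeeping is consistent with (and slightly more explicit than) the paper's own argument.
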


\begin{proof} 
Let $\mathcal D_{12}$ be the given $[\mathcal F_{12}, k, \delta]_q$ code. Set
\begin{center}
$\mathcal D=\left\{\left(
   \begin{array}{cc}
     \boldsymbol{D}|_{\mathcal F_1} & \boldsymbol O \\
     \boldsymbol O & \boldsymbol{D}|_{\mathcal F^t_2} \\
   \end{array}
 \right):\boldsymbol{D}\in \mathcal D_{12}\right\},$
\end{center}
where $\boldsymbol{D}|_{\mathcal F_1}$ denotes the restriction of $\boldsymbol{D}$ in $\mathcal F_1$ and $\boldsymbol{D}|_{\mathcal F^t_2}$ denotes the restriction of $\boldsymbol{D}$ in $\mathcal F^t_2$.
Then $\mathcal D$ is an $[\mathcal F, k, \delta]_q$ code. One can easily verify the linearity and the dimension of $\mathcal D$. For any nonzero $\boldsymbol{D}\in \mathcal D_{12}$, we have
 \begin{align*}
 {\rm rank} \left(
   \begin{array}{cc}
     \boldsymbol{D}|_{\mathcal F_1} & \boldsymbol O \\
     \boldsymbol O & \boldsymbol{D}|_{\mathcal F^t_2} \\
   \end{array}
 \right)&={\rm rank}(\boldsymbol{D}|_{\mathcal F_1})+{\rm rank}(\boldsymbol{D}|_{\mathcal F_2})\\&\geq {\rm rank}(\boldsymbol{D}|_{\mathcal F_1}\mid \boldsymbol{D}|_{\mathcal F_2})={\rm rank}(\boldsymbol{D})\geq \delta,
 \end{align*}
 which proves the minimum rank distance of $\cal D$. \qed
\end{proof}

\subsubsection{Multilevel construction}

A matrix is said to be {\em in row echelon form} if each nonzero row has more leading zeros than the previous row. A matrix is {\em in reduced row echelon form} if (1) the leading coefficient of a row is always to the right of the leading coefficient of the previous row; (2) all leading coefficients are ones; (3) every leading coefficient is the only nonzero entry in its column.

A $k$-dimensional subspace $\mathcal U$ of $\mathbb F^n_q$ can be represented by a {\em $k\times n$ generator matrix} whose rows form a basis of $\mathcal U$. We usually
represent a codeword of a constant dimension code by such a matrix. There is exactly one such matrix in reduced row echelon form and it will be denoted by $E(\mathcal U)$ \cite{es09}.

The {\em identifying vector} $\boldsymbol v(\mathcal U)$ of a subspace $\mathcal U\in {\cal G}_q(n,k)$ is the binary vector of length $n$ and weight $k$ such that the $1's$ of $\boldsymbol v(\mathcal U)$ are in the positions (columns) where $E(\mathcal U)$ has its leading ones (of the rows). The {\em echelon Ferrers form} of a vector of length $n$ and weight $k$, $EF(\boldsymbol v)$, is the matrix in reduced row echelon form with leading entries (of rows) in the columns indexed by the nonzero entries of $\boldsymbol v$ and $\bullet$ (called a {\em dot}) in all entries which do not have terminals zeros or ones. 
\begin{example}
Consider the subspace $\mathcal U\in {\cal G}_2(7,3)$ with reduced row echelon form
\begin{center}
$E(\mathcal U)=\left(
     \begin{array}{ccccccc}
       1 & 0 & 0 & 0 & 1 & 1 & 0 \\
       0 & 0 & 1 & 0 & 1 & 0 & 1 \\
       0 & 0 & 0 & 1 & 0 & 1 & 1 \\
     \end{array}
   \right)$.
\end{center}
Its identifying vector $\boldsymbol{v}(\mathcal U)$ is $1011000$. For the identifying vector $\boldsymbol{v}(\mathcal U)=1011000$, its echelon Ferrers form $EF(\boldsymbol{v}(\mathcal U))$ is the following $3\times 7$ matrix:
\begin{center}
$EF(\boldsymbol{v}(\mathcal U))=\left(
  \begin{array}{ccccccc}
    1 & \bullet & 0 & 0 & \bullet & \bullet & \bullet \\
    0 & 0 & 1 & 0 & \bullet & \bullet & \bullet \\
    0 & 0 & 0 & 1 & \bullet & \bullet & \bullet \\
  \end{array}
\right)$.
\end{center}
\end{example}

To present the multilevel construction, the following two lemmas are crucial.

\begin{lemma}{\rm \cite[Lemma 2]{es09}}\label{lem:efc-2}
Let $\mathcal U,\mathcal V \in \mathcal G_q(n,k)$, $\mathcal U=rowspace(\boldsymbol U)$ and $\mathcal V=rowspace(\boldsymbol V)$, where $\boldsymbol U, \boldsymbol V\in \mathbb F_q^{k\times n}$ are in reduced row echelon forms. Then 
\begin{center}
$d_S(\mathcal U, \mathcal V) \geq d_H(\boldsymbol{v(U)}, \boldsymbol{v(V)})$.
\end{center}
\end{lemma}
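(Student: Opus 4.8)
The plan is to prove the inequality $d_S(\mathcal U,\mathcal V)\geq d_H(\boldsymbol{v}(\mathcal U),\boldsymbol{v}(\mathcal V))$ by relating the subspace distance to the rank of a stacked generator matrix via formula \eqref{1.2}, and then lower-bounding that rank through a combinatorial count of pivot columns. Recall that $d_S(\mathcal U,\mathcal V)=2\cdot{\rm rank}\binom{\boldsymbol U}{\boldsymbol V}-2k$, where $\boldsymbol U=E(\mathcal U)$ and $\boldsymbol V=E(\mathcal V)$ are in reduced row echelon form. The quantity I must estimate from below is therefore ${\rm rank}\binom{\boldsymbol U}{\boldsymbol V}$, the dimension of $\mathcal U+\mathcal V$. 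The Hamming distance $d_H(\boldsymbol v(\mathcal U),\boldsymbol v(\mathcal V))$ counts the positions where the two identifying vectors differ; since both vectors have weight $k$, this is an even number, say $2s$, where $s$ is the number of pivot columns of $\boldsymbol U$ that are not pivot columns of $\boldsymbol V$ (equivalently $s$ equals the number of pivots of $\boldsymbol V$ not among the pivots of $\boldsymbol U$, by the equal-weight symmetry).

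First I would set up the pivot bookkeeping. Let $P$ and $Q$ denote the pivot-column index sets of $\boldsymbol U$ and $\boldsymbol V$ respectively, each of size $k$. Because $\boldsymbol U$ and $\boldsymbol V$ are in \emph{reduced} row echelon form, each pivot column of $\boldsymbol U$ is a standard basis vector within the rows of $\boldsymbol U$ (a single $1$ with zeros elsewhere in that column among the rows of $\boldsymbol U$), and similarly for $\boldsymbol V$. The key structural fact I would exploit is that the rows of the stacked matrix $\binom{\boldsymbol U}{\boldsymbol V}$ already contain $|P\cup Q|$ rows whose leading ones sit in distinct columns once redundant rows (those from $\boldsymbol U$ and $\boldsymbol V$ sharing a common pivot column) are reconciled. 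The plan is to argue that the common pivots in $P\cap Q$ contribute at most $|P\cap Q|=k-s$ to possible rank collapse, so that ${\rm rank}\binom{\boldsymbol U}{\boldsymbol V}\geq |P\cup Q|=k+s$.

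Concretely, I would perform row reduction on $\binom{\boldsymbol U}{\boldsymbol V}$ guided by the pivot columns. For each column index $j\in P\cap Q$, both $\boldsymbol U$ and $\boldsymbol V$ have a row pivoting at $j$; subtracting one such row from the other eliminates one row but keeps all the leading-one structure at the remaining pivots intact, because of the reduced form. After clearing these $|P\cap Q|$ redundancies, the surviving rows still possess leading ones in every column of $P\cup Q$, and these leading ones occupy distinct columns. Hence the number of linearly independent surviving rows is at least $|P\cup Q|=|P|+|Q|-|P\cap Q|=2k-(k-s)=k+s$. Substituting into \eqref{1.2} gives
\begin{align*}
d_S(\mathcal U,\mathcal V)=2\cdot{\rm rank}\binom{\boldsymbol U}{\boldsymbol V}-2k\geq 2(k+s)-2k=2s=d_H(\boldsymbol v(\mathcal U),\boldsymbol v(\mathcal V)),
\end{align*}
which is the desired bound.

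The step I expect to be the main obstacle is the clean justification that eliminating the shared-pivot redundancies does not destroy the leading-one pattern at the columns of $P\cup Q$; one must verify that subtracting the $\boldsymbol V$-row from the $\boldsymbol U$-row at a common pivot column $j$ leaves untouched the leading ones sitting at the other columns, which is exactly where the \emph{reduced} (as opposed to merely echelon) hypothesis is essential, since reducedness forces each pivot column to be a unit coordinate vector within its own matrix and thus prevents cross-contamination of the other pivots. I would phrase this carefully so that the final count of $|P\cup Q|$ linearly independent rows is rigorous rather than merely plausible; the remaining arithmetic tying $|P\cup Q|$ to $d_H$ via the equal-weight condition is routine.
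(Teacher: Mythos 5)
The paper gives no proof of this lemma; it is quoted directly from [Lemma 2, es09], so there is nothing internal to compare against. Your argument is a correct, self-contained proof and is essentially the standard one from that reference: with $P,Q$ the pivot sets, keeping all $k$ rows of $\boldsymbol U$ together with the $s=|Q\setminus P|$ rows of $\boldsymbol V$ whose pivots lie outside $P$ yields $k+s$ rows with pairwise distinct leading positions, hence ${\rm rank}\binom{\boldsymbol U}{\boldsymbol V}\geq |P\cup Q|=k+s$, and the equal-weight identity $d_H(\boldsymbol v(\mathcal U),\boldsymbol v(\mathcal V))=2s$ finishes the computation via \eqref{1.2}. One small correction: the step you single out as the main obstacle is a non-issue, and the \emph{reduced} hypothesis is not actually needed there. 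Replacing the $\boldsymbol V$-row pivoting at a common column $j$ by its difference with the corresponding $\boldsymbol U$-row (or simply discarding it, which can only decrease the rank) does not touch any other row, and linear independence of the retained rows follows purely from their distinct leading positions, which is guaranteed by ordinary (not necessarily reduced) row echelon form.
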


\begin{lemma}{\rm(\cite{es09}, \cite[Proposition 1.2]{tr})}\label{lem:efc-1}
Let $\mathcal U, \mathcal{V}\in \mathcal G_q(n,k)$, $\mathcal U=rowspace(\boldsymbol U)$ and $\mathcal V=rowspace(\boldsymbol V)$, where $\boldsymbol U, \boldsymbol V\in \mathbb F_q^{k\times n}$ are in reduced row echelon forms.
If $\boldsymbol{v(U)} = \boldsymbol{v(V)}$, then
\begin{center}
$d_S(\mathcal U,\mathcal V)=2d_R(\boldsymbol C_{\boldsymbol U}, \boldsymbol C_{\boldsymbol V})$,
\end{center}
where $\boldsymbol C_{\boldsymbol U}$ and $\boldsymbol C_{\boldsymbol V}$ denote the submatrices of $\boldsymbol{U}$ and $\boldsymbol{V}$, respectively, without the columns of their pivots.
\end{lemma}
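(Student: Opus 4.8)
The plan is to reduce the claim to the single rank computation supplied by \eqref{1.2}, mirroring the outline of Proposition \ref{lift}, while bookkeeping the pivot columns. First I would invoke \eqref{1.2} to write $d_S(\mathcal U,\mathcal V)=2\cdot{\rm rank}\binom{\boldsymbol U}{\boldsymbol V}-2k$, so that it suffices to prove ${\rm rank}\binom{\boldsymbol U}{\boldsymbol V}=k+d_R(\boldsymbol C_{\boldsymbol U},\boldsymbol C_{\boldsymbol V})$.

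Next I would exploit the hypothesis $\boldsymbol{v(U)}=\boldsymbol{v(V)}$. Since $\boldsymbol U$ and $\boldsymbol V$ are in reduced row echelon form, their leading ones lie in the same set of $k$ pivot columns $P$, and in those columns each of $\boldsymbol U$ and $\boldsymbol V$ restricts to the identity matrix $\boldsymbol I_k$ (every pivot column being a standard basis vector). Applying one column permutation $\pi$ that moves the columns indexed by $P$ to the front, in the same relative order for both matrices, turns the stacked matrix into $\binom{\boldsymbol U}{\boldsymbol V}\pi=\begin{pmatrix}\boldsymbol I_k & \boldsymbol C_{\boldsymbol U}\\ \boldsymbol I_k & \boldsymbol C_{\boldsymbol V}\end{pmatrix}$, where the non-pivot blocks are precisely $\boldsymbol C_{\boldsymbol U}$ and $\boldsymbol C_{\boldsymbol V}$, aligned column by column. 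A column permutation leaves the rank unchanged, so ${\rm rank}\binom{\boldsymbol U}{\boldsymbol V}$ equals the rank of this block matrix.

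Finally I would carry out the block row operation of subtracting the top block row from the bottom, obtaining $\begin{pmatrix}\boldsymbol I_k & \boldsymbol C_{\boldsymbol U}\\ \boldsymbol O & \boldsymbol C_{\boldsymbol V}-\boldsymbol C_{\boldsymbol U}\end{pmatrix}$, whose rank is evidently $k+{\rm rank}(\boldsymbol C_{\boldsymbol V}-\boldsymbol C_{\boldsymbol U})$. Substituting back into \eqref{1.2} yields $d_S(\mathcal U,\mathcal V)=2\,{\rm rank}(\boldsymbol C_{\boldsymbol U}-\boldsymbol C_{\boldsymbol V})=2d_R(\boldsymbol C_{\boldsymbol U},\boldsymbol C_{\boldsymbol V})$, which is the asserted identity.

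There is no genuine obstacle here; the computation is essentially the one already performed for Proposition \ref{lift}, of which this lemma is the natural generalization to an arbitrary pivot pattern. The only point deserving explicit care — and the step I would spell out most carefully — is the column bookkeeping: one must check that a single permutation $\pi$ simultaneously exposes $\boldsymbol I_k$ in the pivot part of both $\boldsymbol U$ and $\boldsymbol V$ and aligns their non-pivot columns in matching order, so that the entrywise difference $\boldsymbol C_{\boldsymbol U}-\boldsymbol C_{\boldsymbol V}$ is meaningful. This is guaranteed precisely by the equality $\boldsymbol{v(U)}=\boldsymbol{v(V)}$, which is exactly where the hypothesis is used.
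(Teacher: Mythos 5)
Your proof is correct; the paper itself quotes this lemma from \cite{es09} and \cite{tr} without supplying a proof, and your argument is exactly the standard one, generalizing the rank computation the paper carries out for Proposition \ref{lift} (which is the special case of identifying vector $(1\ldots1\,0\ldots0)$). The one point you rightly flag — that reduced row echelon form forces each pivot column to be a standard basis vector, so a single column permutation exposes $\boldsymbol I_k$ in both matrices simultaneously and aligns the non-pivot columns — is precisely where the hypothesis $\boldsymbol{v}(\mathcal U)=\boldsymbol{v}(\mathcal V)$ is used, and your handling of it is sound.
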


\begin{construction}[Multilevel construction \cite{es09}]\label{con:multi-level}
Let $\cal C$ be a binary Hamming code of length $n$, weight $k$ and minimum Hamming distance $2\delta$. For each codeword ${\boldsymbol C}\in {\cal C}$, its echelon Ferrers form is $EF({\boldsymbol C})$. All dots in $EF({\boldsymbol C})$ produce a Ferrers diagram ${\cal F}_{\boldsymbol C}$. If there exists an $[{\cal F}_{\boldsymbol C}, k_{\boldsymbol C}, \delta]_q$ code ${\cal D}_{\boldsymbol C}$ for each ${\boldsymbol C}\in {\cal C}$, then by Lemmas $\ref{lem:efc-2}$ and $\ref{lem:efc-1}$, the row spaces of the matrices in $\bigcup_{{\boldsymbol C}\in {\cal C}} {\cal D}_{\boldsymbol C}$ form a $(n,2\delta,k)_q$-CDC.
\end{construction}

\subsection{Combination of parallel construction and multilevel construction}

Although the multilevel construction is effective to construct CDCs, it is not known so far how to pick up identifying vectors such that the resulting CDCs are optimal. Also, only a few infinite families on optimal FDRMCs are known in the literature.
The following constructions help to reduce the choice of identifying vectors.

\subsubsection{The first construction}

\begin{construction}\label{comb}
Let $n\geq 2k$. Suppose that there exists an $(n,M_1,2\delta,k)_q$-CDC which is constructed via the multilevel construction satisfying that for any of its identifying vectors $\boldsymbol v=(\underbrace{\boldsymbol v^{(1)}}_{n-k}\mid \underbrace{\boldsymbol v^{(2)}}_{k})$, it holds that $wt(\boldsymbol v^{(1)})\geq s\geq \delta$, where $wt(\boldsymbol v^{(1)})$ is the weight of $\boldsymbol v^{(1)}$. If there exists a $(k\times (n-k),M_2,\delta,[0,s-\delta])_q$-GRMC, then
there exists an $(n,M_1+M_2,2\delta,k)_q$-CDC.
\end{construction}

\begin{proof}
Let $\mathcal C_1$ be the given $(n,M_1,2\delta,k)_q$-CDC and  $\mathcal D$ be a $(k\times (n-k),M_2,\delta,[0,s-\delta])_q$-GRMC. Set $\mathcal C_2=\{{\rm rowspace}(\boldsymbol B\mid \boldsymbol I_k): \boldsymbol B\in \mathcal D\}$. Let $\mathcal C=\mathcal C_1 \cup \mathcal C_2$. Then $\mathcal C$ is an $(n,M_1+M_2,2\delta,k)_q$-CDC.

It suffices to examine the subspace distance of $\mathcal C$. For any $\mathcal U \in \mathcal C_1$, let $(\underbrace{\boldsymbol A}_{n-k}|\underbrace{\boldsymbol D}_{k})$ be the reduced row echelon form of $\mathcal U$. We have ${\rm rank}(\boldsymbol A)\geq s$ by the fact that any identifying vector $\boldsymbol v$ satisfies wt$(\boldsymbol v^{(1)})\geq s$. For any $\mathcal V \in \mathcal C_2$,
\begin{align*}
d_S(\mathcal U,\mathcal V)&=2\cdot{\rm rank}\left(
                                                  \begin{array}{cc}
                                                    \boldsymbol A & \boldsymbol D \\
                                                    \boldsymbol B & \boldsymbol I_k \\
                                                  \end{array}
                                                \right)-2k\\
                                                &=2\cdot{\rm rank}\left(
                                                  \begin{array}{cc}
                                                    \boldsymbol A-\boldsymbol {DB} & \boldsymbol O \\
                                                    \boldsymbol B & \boldsymbol I_k \\
                                                  \end{array}
                                                \right)-2k\\
                                                &=2\cdot{\rm rank}(\boldsymbol A-\boldsymbol {DB})\\
                                                &\geq 2s-2\cdot{\rm rank}(\boldsymbol {DB})\\
                                                &\geq 2s-2\cdot{\rm rank}(\boldsymbol {B})\geq2s-2(s-\delta)=2\delta.
\end{align*}
For any $\mathcal U,\mathcal V \in \mathcal C_2$ and $\mathcal U\neq\mathcal V $, we know $d_S(\mathcal U,\mathcal V)\geq 2\delta$ by Lemma \ref{lem:efc-1}.\qed
\end{proof}

Construction \ref{comb} shows a criteria to combine the parallel construction and the multilevel construction to produce CDCs with large size. When $n\geq2k+\delta$ and $k\geq 2\delta$, applying Construction \ref{comb}, in what follows we shall present better lower bounds on ${\bar A}_q(n,2\delta,k)$ than that in Theorem \ref{cor:parallel}.

\begin{lemma}\label{mul-1}
Let $n\geq2k+\delta$ and $k\geq 2\delta$. Let $q$ be any prime power and
$$M_1=q^{(n-k)(k-\delta+1)}\frac{1-q^{-\lfloor\frac{k}{\delta}\rfloor\delta^2}}{1-q^{-\delta^2}}+
q^{(n-k-\delta)(k-\delta+1)}.$$
Then there exists an $(n,M_1,2\delta,k)_q$-CDC constructed via the multilevel construction satisfying that for any of its identifying vectors $\boldsymbol v=(\underbrace{\boldsymbol v^{(1)}}_{n-k}|\underbrace{\boldsymbol v^{(2)}}_{k})$, it holds that wt$(\boldsymbol v^{(1)})\geq k$, and this CDC contains a lifted MRD code $(n,q^{(n-k)(k-\delta+1)},2\delta,k)_q$-CDC as a subset.
\end{lemma}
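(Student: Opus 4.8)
The plan is to construct an explicit $(n,M_1,2\delta,k)_q$-CDC via the multilevel construction (Construction~\ref{con:multi-level}), choosing a family of identifying vectors all of whose first $n-k$ positions carry weight at least $k$, and then count the total number of codewords. Since $n\geq 2k+\delta$, the first block has length $n-k\geq k+\delta$, so there is room to place all $k$ leading ones of each identifying vector inside the first $n-k$ coordinates while still leaving slack. The natural choice is to use identifying vectors of the form $\boldsymbol v=(\boldsymbol w\mid \boldsymbol 0)$ where $\boldsymbol w$ has length $n-k$, weight $k$, and the $\boldsymbol 0$ block occupies the last $k$ positions; this automatically forces $wt(\boldsymbol v^{(1)})=k\geq\delta$. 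To guarantee subspace distance $2\delta$ it suffices, by Lemma~\ref{lem:efc-2}, that the pairwise Hamming distance between identifying vectors be at least $2\delta$ whenever they differ, and for equal identifying vectors the FDRMC attached to each must have minimum rank distance $\delta$ (Lemma~\ref{lem:efc-1}).

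\textbf{First I would} select the identifying vectors as a suitable coset-like family. Writing $n-k=qk'+r$ style decompositions is not what is needed; rather, the count $M_1$ suggests partitioning the weight-$k$ ones among consecutive blocks of length $\delta$. Concretely, I expect the identifying vectors to be built from $\lfloor k/\delta\rfloor$ shifts that move a block of $\delta$ ones, together with one ``boundary'' vector accounting for the additive term $q^{(n-k-\delta)(k-\delta+1)}$. For each such identifying vector $\boldsymbol C$, its echelon Ferrers form produces a Ferrers diagram $\mathcal F_{\boldsymbol C}$, and I would invoke Theorem~\ref{thm:shortening} (or its combination with Theorems~\ref{thm:combine with same dim} and~\ref{thm:combine with same distance}) to produce an optimal $[\mathcal F_{\boldsymbol C},k_{\boldsymbol C},\delta]_q$ code. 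The dimension $k_{\boldsymbol C}$ is read off from the diagram via Lemma~\ref{lem:upper bound}: each identifying vector with its leading ones packed to the left of a large free region yields an FDRMC of dimension matching one of the geometric-series terms, and summing $q^{k_{\boldsymbol C}}$ over the chosen family reproduces
$$M_1=q^{(n-k)(k-\delta+1)}\frac{1-q^{-\lfloor k/\delta\rfloor\delta^2}}{1-q^{-\delta^2}}+q^{(n-k-\delta)(k-\delta+1)}.$$

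\textbf{Next I would} verify the two distance conditions of the multilevel construction. For identifying vectors that are equal, the $\delta$-rank-distance condition holds because the chosen FDRMCs are $[\mathcal F_{\boldsymbol C},k_{\boldsymbol C},\delta]_q$ codes, so Lemma~\ref{lem:efc-1} gives subspace distance at least $2\delta$. For distinct identifying vectors, I must check their pairwise Hamming distance is at least $2\delta$; this is where the block structure matters, since shifting a block of $\delta$ ones by one block position changes exactly $2\delta$ coordinates, giving Hamming distance exactly $2\delta$, and Lemma~\ref{lem:efc-2} then supplies the required subspace distance. I would also confirm that the first (lexicographically smallest) identifying vector $(\underbrace{1\cdots1}_{k}\mid\underbrace{0\cdots0}_{n-2k}\mid\underbrace{0\cdots0}_{k})$ yields precisely the lifted MRD code $(n,q^{(n-k)(k-\delta+1)},2\delta,k)_q$-CDC of Proposition~\ref{lift}, establishing the containment claim.

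\textbf{The hard part will be} pinning down the exact family of identifying vectors so that the summation of FDRMC dimensions collapses into the stated closed form, and simultaneously guaranteeing all pairwise Hamming distances are at least $2\delta$. The dimension $k_{\boldsymbol C}$ of each optimal FDRMC depends delicately on how the dots of $\mathcal F_{\boldsymbol C}$ interact with the $\delta-1$ rightmost columns and top rows in Lemma~\ref{lem:upper bound}, so the bookkeeping that turns $\sum_{\boldsymbol C} q^{k_{\boldsymbol C}}$ into the geometric series $\frac{1-q^{-\lfloor k/\delta\rfloor\delta^2}}{1-q^{-\delta^2}}$ scaled by $q^{(n-k)(k-\delta+1)}$, plus the correction term, is the crux. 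I expect the condition $n\geq 2k+\delta$ to be exactly what guarantees each $\mathcal F_{\boldsymbol C}$ has its rightmost $\delta-1$ columns full enough for Theorem~\ref{thm:shortening} to apply and deliver optimal codes of the claimed dimensions, while $k\geq 2\delta$ ensures the shifts of $\delta$-blocks fit within the weight-$k$ budget.
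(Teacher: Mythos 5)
Your plan is essentially the paper's proof: the authors take exactly the family you anticipate, namely $\boldsymbol v_i=(\underbrace{1\ldots1}_{k-i\delta}\underbrace{0\ldots0}_{\delta}\underbrace{1\ldots1}_{i\delta}\underbrace{0\ldots0}_{n-k-\delta})$ for $0\leq i\leq\lfloor k/\delta\rfloor$ (a $\delta$-block shift, all ones lying in the first $k+\delta\leq n-k$ coordinates, pairwise Hamming distance exactly $2\delta$), attach to each an optimal FDRMC obtained by transposing the residual $k\times(n-k)$ Ferrers diagram and applying Theorem~\ref{thm:shortening}, and verify the dimensions $(n-k)(k-\delta+1)-i\delta^2$ for $i<\lfloor k/\delta\rfloor$ and $(n-k-\delta)(k-\delta+1)$ for $i=\lfloor k/\delta\rfloor$, which sum to $M_1$. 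The bookkeeping you flag as the crux does work out exactly as you predict, and the $i=0$ vector $(1^k\mid 0^{n-k})$ supplies the lifted MRD subcode.
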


\begin{proof}
We construct the set of identifying vectors of length $n$ as follows:
 \begin{center}
 $\mathcal A=\{(\underbrace{1\ldots1}_{k}\underbrace{0\ldots0}_{n-k})\}\cup\{(\boldsymbol u\mid \underbrace{1\ldots1}_{\delta}\underbrace{0\ldots0}_{n-k-\delta}):~\boldsymbol u\in \mathcal B\}$,
 \end{center}
where
\begin{center}
$\mathcal B=\{(\underbrace{1\ldots1}_{k-\delta}\underbrace{0\ldots0}_{\delta}), (\underbrace{1\ldots1}_{k-2\delta}\underbrace{0\ldots0}_{\delta}\underbrace{1\ldots1}_{\delta}),\ldots, (\underbrace{1\ldots1}_{k-\lfloor\frac{k}{\delta}\rfloor\delta}\underbrace{0\ldots0}_{\delta}
\underbrace{1\ldots1}_{(\lfloor\frac{k}{\delta}\rfloor-1)\delta})\}$.
\end{center}
The size of $\cal B$ is $\lfloor\frac{k}{\delta}\rfloor$, and each vector in $\cal B$ has size $k$ and weight $k-\delta$ (note that the $\delta$ zeros are always shifted to the left by $\delta$ positions). It is readily checked that for any $\boldsymbol v,\boldsymbol v'\in \mathcal A$, $\boldsymbol v\neq\boldsymbol v'$, we have $d_H(\boldsymbol v,\boldsymbol v')\geq 2\delta$.


 For any identifying vector $\boldsymbol v_i=(\underbrace{\boldsymbol v^{(1)}_{i}}_{n-k}|\underbrace{\boldsymbol v^{(2)}_{i}}_{k})=(\underbrace{1\ldots1}_{k-i\delta}\underbrace{0\ldots0}_{\delta}\underbrace{1\ldots1}_{i\delta}\underbrace{0\ldots0}_{n-k-\delta})\in \mathcal A$, $0\leq i\leq \lfloor\frac{k}{\delta}\rfloor$, we have wt$(\boldsymbol v^{(1)}_{i})\geq k$ because of $n-k\geq k+\delta$. The echelon Ferrers form of $\boldsymbol v_i$ is
 \begin{center}
$EF(\boldsymbol v_i)=\left(
                     \begin{array}{cccc}
                       \boldsymbol I_{k-i\delta} & \mathcal F_1 & \boldsymbol O_1 & \mathcal F_2 \\
                       \boldsymbol O_2 & \boldsymbol O_3 & \boldsymbol I_{i\delta} & \mathcal F_3 \\
                     \end{array}
                   \right)$,
\end{center}
where $\mathcal F_1$ is a $(k-i\delta)\times \delta$ full Ferrers diagram, $\mathcal F_2$ is a $(k-i\delta)\times (n-k-\delta)$ full Ferrers diagram, $\mathcal F_3$ is an $i\delta\times (n-k-\delta)$ full Ferrers diagram, $\boldsymbol O_1$ is a $(k-i\delta)\times i\delta$ zero matrix, $\boldsymbol O_2$ is an $i\delta\times (k-i\delta)$ zero matrix and $\boldsymbol O_3$ is an $i\delta\times \delta$ zero matrix.
For $0\leq i\leq \lfloor\frac{k}{\delta}\rfloor$, let \begin{center}
$\mathcal H_i=[\underbrace{k-i\delta,\ldots,k-i\delta}_{\delta},\underbrace{k,\ldots,k}_{n-k-\delta}]$
\end{center}
be a $k\times (n-k)$ Ferrers diagram, and
\begin{center}
 $\mathcal C_i=\left\{{\rm rowspace}\left(
                     \begin{array}{cccc}
                       \boldsymbol I_{k-i\delta} & \boldsymbol D_1 & \boldsymbol O_1 & \boldsymbol D_2 \\
                       \boldsymbol O_2 & \boldsymbol O_3 & \boldsymbol I_{i\delta} & \boldsymbol D_3 \\
                     \end{array}
                   \right): \left(
                               \begin{array}{cc}
                                 \boldsymbol D_1 & \boldsymbol D_2 \\
                                 \boldsymbol O & \boldsymbol D_3 \\
                               \end{array}
                             \right)
                   \in \mathcal C_{\mathcal H_i} \right\}$,
\end{center}
where $\mathcal C_{\mathcal H_i}$ is an $[\mathcal H_i,(n-k)(k-\delta+1)-i\delta^2,\delta]_q$ code when $0\leq i\leq \lfloor\frac{k}{\delta}\rfloor-1$, and $\mathcal C_{\mathcal H_i}$ is an $[\mathcal H_i,(n-k-\delta)(k-\delta+1),\delta]_q$ code when $i=\lfloor\frac{k}{\delta}\rfloor$.

Here we need to examine the existence of $\mathcal C_{\mathcal H_i}$. Consider the transpose of $\mathcal H_i$:
\begin{center}
$\mathcal H_i^t=[\underbrace{n-k-\delta,\ldots,n-k-\delta}_{i\delta},\underbrace{n-k,\ldots,n-k}_{k-i\delta}]$.
\end{center}
For the $(n-k)\times k$ Ferrers diagram $\mathcal H_i^t$, every column of $\mathcal H_i^t$ contains at least $n-k-\delta$ dots. Since $n-k-\delta\geq k$, by Theorem \ref{thm:shortening}, there exists an optimal $[\mathcal H_i^t,\delta]_q$ code. Now it remains to analyse its dimension. If $k-i\delta\geq\delta$, i.e, $i\leq\lfloor\frac{k}{\delta}\rfloor-1$, then by Theorem \ref{thm:shortening}, the optimal dimension is equal to the number of dots in $\mathcal H_i^t$ which are not contained in the rightmost $\delta-1$ columns: $i\delta(n-k-\delta)+(k-i\delta-\delta+1)(n-k)=(n-k)(k-\delta+1)-i\delta^2$. Thus there exists an optimal $[\mathcal H_i^t,(n-k)(k-\delta+1)-i\delta^2,\delta]_q$ code. If $0\leq k-i\delta<\delta$, i.e, $i=\lfloor\frac{k}{\delta}\rfloor$, then the optimal dimension is $(n-k-\delta)(k-\delta+1)$. Thus there exists an optimal $[\mathcal H_i^t,(n-k-\delta)(k-\delta+1),\delta]_q$ code.

Let $\mathcal C=\bigcup_{i=0}^{\lfloor\frac{k}{\delta}\rfloor}\mathcal C_i$. Then
\begin{align*}
|\mathcal C|&=\sum_{i=0}^{\lfloor\frac{k}{\delta}\rfloor-1}q^{(n-k)(k-\delta+1)-i\delta^2}+q^{(n-k-\delta)(k-\delta+1)}\\&
=q^{(n-k)(k-\delta+1)}\frac{1-q^{-\lfloor\frac{k}{\delta}\rfloor\delta^2}}{1-q^{-\delta^2}}+q^{(n-k-\delta)(k-\delta+1)}.
\end{align*}
For any $\mathcal U\in \mathcal C_i$ and $\mathcal V \in \mathcal C_j$, $0\leq i,j\leq \lfloor\frac{k}{\delta}\rfloor$, $i\neq j$, by Lemma \ref{lem:efc-2}, we have $d_s(\mathcal U,\mathcal V)\geq d_H(\boldsymbol v_i,\boldsymbol v_j)\geq 2\delta$. For any $\mathcal U, \mathcal V \in \mathcal C_i$, $0\leq i\leq \lfloor\frac{k}{\delta}\rfloor$, by Lemma \ref{lem:efc-1}, we have $d_s(\mathcal U,\mathcal V)\geq 2\delta$. Thus $\mathcal C$ is an $(n,M_1,2\delta,k)_q$-CDC. This CDC contains a lifted MRD code $(n,q^{(n-k)(k-\delta+1)}$, $2\delta,k)_q$-CDC as a subset which comes from the identifying vector $(\underbrace{1\ldots1}_{k}\underbrace{0\ldots0}_{n-k})$. \qed
\end{proof}

Applying Lemma \ref{mul-1} and Construction \ref{comb} with $s=k$, we obtain the following result.

\begin{theorem}\label{new-3}
Let $n\geq2k+\delta$ and $k\geq 2\delta$. Then
\begin{align*}
{\bar A}_q(n,2\delta,k)\geq q^{(n-k)(k-\delta+1)}\frac{1-q^{-\lfloor\frac{k}{\delta}\rfloor\delta^2}}{1-q^{-\delta^2}}&+q^{(n-k-\delta)(k-\delta+1)}\\&+A^R_q(k\times (n-k), \delta, [0,k-\delta]).
\end{align*}
\end{theorem}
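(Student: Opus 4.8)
\proof
The plan is to compose the two ingredients already established above: the structured multilevel code of Lemma \ref{mul-1} and the gluing mechanism of Construction \ref{comb}. I first check that the hypotheses of Lemma \ref{mul-1} hold. The assumption $n\geq 2k+\delta$ gives in particular $n\geq 2k$, and $k\geq 2\delta$ with $\delta\geq 1$ gives $k>\delta$; thus Lemma \ref{mul-1} applies and yields an $(n,M_1,2\delta,k)_q$-CDC, say $\mathcal C_1$, built by the multilevel construction with
\[
M_1=q^{(n-k)(k-\delta+1)}\frac{1-q^{-\lfloor\frac{k}{\delta}\rfloor\delta^2}}{1-q^{-\delta^2}}+q^{(n-k-\delta)(k-\delta+1)},
\]
with the crucial property that every identifying vector $\boldsymbol v=(\boldsymbol v^{(1)}\mid\boldsymbol v^{(2)})$ satisfies $wt(\boldsymbol v^{(1)})\geq k$, and containing a lifted MRD $(n,q^{(n-k)(k-\delta+1)},2\delta,k)_q$-CDC as a subset.

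Next I would invoke Construction \ref{comb} with $s=k$. The weight requirement $wt(\boldsymbol v^{(1)})\geq s=k$ demanded there is exactly what Lemma \ref{mul-1} supplies, and $s=k\geq 2\delta\geq\delta$, so the hypothesis $s\geq\delta$ is also met. Let $\mathcal D$ be an optimal $(k\times(n-k),M_2,\delta,[0,k-\delta])_q$-GRMC with $M_2=A^R_q(k\times(n-k),\delta,[0,k-\delta])$ codewords, which exists since the defining maximum is attained; here I note that the index set $[0,s-\delta]$ appearing in Construction \ref{comb} is precisely $[0,k-\delta]$, matching the quantity in the claimed bound. Construction \ref{comb} then produces an $(n,M_1+M_2,2\delta,k)_q$-CDC, namely $\mathcal C=\mathcal C_1\cup\mathcal C_2$ with $\mathcal C_2=\{{\rm rowspace}(\boldsymbol B\mid\boldsymbol I_k):\boldsymbol B\in\mathcal D\}$.

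Finally, since $\mathcal C_1$ already contains a lifted MRD code, so does the enlarged code $\mathcal C_1\cup\mathcal C_2$, whence $M_1+M_2$ is a valid lower bound for ${\bar A}_q(n,2\delta,k)$; substituting the values of $M_1$ and $M_2$ gives exactly the asserted inequality. There is no genuine obstacle at this stage: the substantive work has been discharged inside Lemma \ref{mul-1}, whose identifying-vector set was engineered so that the leftmost $n-k$ coordinates always carry full weight $k$, precisely so that Construction \ref{comb} can be applied with the largest admissible value $s=k$. The only things to confirm are the numerical identification of $s-\delta$ with $k-\delta$ and the cross subspace-distance guarantee between $\mathcal C_1$ and $\mathcal C_2$, both of which are immediate from the cited results. \qed
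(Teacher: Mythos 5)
Your proposal is correct and follows exactly the paper's own route: the paper proves this theorem in one line by "applying Lemma \ref{mul-1} and Construction \ref{comb} with $s=k$", which is precisely your argument, including the observation that the lifted MRD subcode of Lemma \ref{mul-1} survives the union so that the bound applies to ${\bar A}_q$ rather than just $A_q$. Your version merely spells out the hypothesis checks that the paper leaves implicit.
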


Theorem \ref{new-3} together with the use of Proposition \ref{grmc} provides many new constant dimension codes with larger size than the previously best known codes in \cite{hkkw}. We list some of them in Table 1 in Appendix B.

When $\delta=2$, Theorem \ref{new-3} can be improved by using the following CDCs constructed in \cite{st15}.

\begin{lemma} {\rm \cite[Construction B, Corollary 27]{st15}} \label{stc4}
Let $n\geq2k+2$ and $k\geq 4$. Let $q$ be any prime power and
\begin{align*}
M_1= \sum_{j=1}^{\lfloor\frac{n-2}{k}\rfloor-1}\left(q^{(k-1)(n-jk)}+\frac{(q^{2(k-2)}-1)(q^{2(n-jk-1)}-1)}{(q^4-1)^2} q^{(k-3)(n-jk-2)+4}\right).
\end{align*}
Then there exists an $(n,M_1,4,k)_q$-CDC constructed via the multilevel construction satisfying that for any of its identifying vectors $\boldsymbol v=(\underbrace{\boldsymbol v^{(1)}}_{n-k}\mid \underbrace{\boldsymbol v^{(2)}}_{k}),$ it holds that wt$(\boldsymbol v^{(1)})\geq k-2$, and this CDC contains a lifted MRD code $(n,q^{(n-k)(k-\delta+1)},4,k)_q$-CDC as a subset.
\end{lemma}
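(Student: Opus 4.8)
The plan is to exhibit the claimed code as a single instance of the multilevel construction (Construction \ref{con:multi-level}), using the pending-block and graph-matching family of \cite[Construction B]{st15}; since that construction is already available, the genuinely new work for our purposes is to check that every identifying vector concentrates almost all of its weight in its first $n-k$ coordinates. First I would fix, for each block index $j$ with $1\le j\le\lfloor\frac{n-2}{k}\rfloor-1$, a \emph{main} identifying vector $\boldsymbol v_j=0^{(j-1)k}\,1^{k}\,0^{\,n-jk}$. Its echelon Ferrers form is $(\boldsymbol O\mid\boldsymbol I_k\mid\mathcal F)$ with $\mathcal F$ a full $k\times(n-jk)$ diagram, so by Theorem \ref{thm:shortening} the associated optimal FDRMC is an MRD$[k\times(n-jk),2]_q$ code of size $q^{(k-1)(n-jk)}$; this reproduces the first summand of each term. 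Taking $j=1$ gives $\boldsymbol v_1=1^{k}0^{\,n-k}$, whose FDRMC is exactly the lifted MRD code of Proposition \ref{lift} of size $q^{(n-k)(k-1)}=q^{(n-k)(k-\delta+1)}$, which settles the containment claim.

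Next I would adjoin, for each $j$, the family of \emph{secondary} identifying vectors produced by the pending-block mechanism of \cite{st15}: these are obtained from $\boldsymbol v_j$ by shifting a pending block of dimension two, relocating at most two of the $k$ leading ones while preserving the surrounding structure. Codewords sharing an identifying vector are separated by $d_S\ge 4$ via Lemma \ref{lem:efc-1}, codewords whose identifying vectors lie at Hamming distance at least $4$ are handled by Lemma \ref{lem:efc-2}, and the pending-block compatibility is precisely what forces $d_S\ge 4$ on the pairs at Hamming distance $2$. Counting the optimal FDRMCs on the resulting echelon Ferrers forms together with the matching bookkeeping of \cite{st15} yields the second summand $\frac{(q^{2(k-2)}-1)(q^{2(n-jk-1)}-1)}{(q^4-1)^2}\,q^{(k-3)(n-jk-2)+4}$, and summing both contributions over $j$ reproduces the closed form $M_1$.

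For the weight condition I would argue directly on the two families. For a main vector the block of ones occupies coordinates $[(j-1)k,jk)$, and the range $j\le\lfloor\frac{n-2}{k}\rfloor-1$ forces $jk\le n-2-k<n-k$, so the whole block lies among the first $n-k$ coordinates and $wt(\boldsymbol v^{(1)})=k$. For a secondary vector the pending-block shift displaces at most two of the leading ones, and I would verify that it never pushes more than two of them past the $(n-k)$-th coordinate, giving $wt(\boldsymbol v^{(1)})\ge k-2$ uniformly, which is the threshold needed to later feed this code into Construction \ref{comb} with $s=k-2$.

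The main obstacle is the distance verification for identifying vectors at Hamming distance $2$: there the naive bound $d_S\ge d_H=2$ of Lemma \ref{lem:efc-2} is too weak, and one must instead exploit the shared pending block to show that the non-pivot columns already force an extra unit of rank, lifting $d_S$ to $4$. This is exactly the analysis carried out in \cite[Construction B]{st15}, so I would invoke it rather than reprove it, and spend the original effort on the inequality $wt(\boldsymbol v^{(1)})\ge k-2$ and on checking that the summation of the FDRMC sizes telescopes to the stated value of $M_1$.
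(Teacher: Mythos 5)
Your proposal follows essentially the same route as the paper's own (sketch) proof: both list the identifying vectors of \cite[Construction B]{st15} explicitly --- the main block vectors $0^{(j-1)k}1^k0^{n-jk}$ together with the pairs $(\boldsymbol w,\boldsymbol z)$ of weight $k-2$ and weight $2$ --- then verify $wt(\boldsymbol v^{(1)})\geq k-2$ exactly as you do (from $jk\leq n-k-2$ and the fact that only the two displaced ones can fall into the last $k$ coordinates), and defer the minimum-distance check and the evaluation of $M_1$ to \cite[Corollary 27]{st15}. One minor caveat that does not affect correctness: for the vectors as actually listed (the special position-pairs shifting by two each time), all identifying vectors are pairwise at Hamming distance at least $4$, so the distance-$2$ difficulty you single out as the main obstacle does not arise for this particular family; since you invoke \cite{st15} for that step anyway, your argument still goes through.
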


\begin{proof} (sketch only)
We here employ the same set of identifying vectors as those in Construction B of \cite{st15} (note that the identifying vectors are exhibited in \cite{st15} recursively, and here we list all of them explicitly):
\begin{align*}
&{\cal A}=\{(\underbrace{0\ldots0}_{ik}\underbrace{1\ldots1}_{k}\underbrace{0\ldots0}_{n-(i+1)k}):0\leq i\leq \lfloor\frac{n-2}{k}\rfloor-2\}\ \cup \\
&\{(\underbrace{0\ldots0}_{ik}|\underbrace{\boldsymbol w}_{k}|\underbrace{\boldsymbol z}_{n-(i+1)k}):~\boldsymbol w\in \mathcal B,~\boldsymbol z\in \mathcal D_{i+1},~ 0\leq i\leq \lfloor\frac{n-2}{k}\rfloor-2\},
\end{align*}
where
\begin{equation}
\mathcal B=\{(\underbrace{1\ldots1}_{k-2}00), (\underbrace{1\ldots1}_{k-4}0011),\ldots,\boldsymbol u\},~~~ \label{}\nonumber
\boldsymbol u=\left \{
\begin {aligned}
&(00\underbrace{11\ldots1}_{k-2}),~~{\rm if}~k~{\rm is ~ even};\\
&(100\underbrace{1\ldots1}_{k-3}),~~{\rm if}~k~{\rm is ~ odd},\\
\end {aligned}
\right.
\end{equation}
and
\begin{equation*}
\mathcal D_{i+1}=\{(11\underbrace{00\ldots00}_{n-(i+1)k-2}), (0011\underbrace{00\ldots00}_{n-(i+1)k-4}),\ldots,\boldsymbol u'\},
\end{equation*}
\begin{equation}\nonumber
\boldsymbol u'=\left \{
\begin {aligned}
&(\underbrace{00\ldots00}_{n-(i+1)k-2}11),~~{\rm if}~n-(i+1)k~{\rm is ~ even};\\
&(\underbrace{00\ldots00}_{n-(i+1)k-3}110),~~{\rm if}~n-(i+1)k~{\rm is ~ odd}.\\
\end {aligned}
\right.
\end{equation}
The size of $\cal B$ is $\lfloor\frac{k}{2}\rfloor$, and each vector in $\cal B$ has size $k$ and weight $k-2$ (note that the two zeros are always shifted to the left by two positions). The size of ${\cal D}_{i+1}$ is $\lfloor\frac{n-(i+1)k}{2}\rfloor$, and each vector in ${\cal D}_{i+1}$ has size $n-(i+1)k$ and weight $2$ (note that the two ones are always shifted to the left by two positions).

Since $i\leq \lfloor\frac{n-2}{k}\rfloor-2$, we have $n-k\geq (i+1)k$. Then for any identifying vector $\boldsymbol v=(\underbrace{\boldsymbol v^{(1)}}_{n-k}|\underbrace{\boldsymbol v^{(2)}}_{k})$ shown above, one can check that wt$(\boldsymbol v^{(1)})\geq k-2$. By Construction B and Corollary 27 in \cite{st15}, these identifying vectors generate an $(n,M_1,4,k)_q$-CDC. This CDC contains a lifted MRD code $(n,q^{(n-k)(k-\delta+1)},4,k)_q$-CDC as a subset which comes from the identifying vector $(\underbrace{1\ldots1}_{k}\underbrace{0\ldots0}_{n-k})$. \qed
\end{proof}



\begin{theorem}\label{con4}
Let $n\geq2k+2$ and $k\geq 4$. Then
\begin{align*}
{\bar A}_q(n,4,k)\geq & \sum_{j=1}^{\lfloor\frac{n-2}{k}\rfloor-1}\left(q^{(k-1)(n-jk)}+\frac{(q^{2(k-2)}-1)(q^{2(n-jk-1)}-1)}{(q^4-1)^2} q^{(k-3)(n-jk-2)+4}\right) \\
& +A^R_q(k\times (n-k), 2, [0,k-4]).
\end{align*}
\end{theorem}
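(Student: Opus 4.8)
The plan is to combine the two ingredients that have already been prepared in the excerpt, namely the CDC of Lemma~\ref{stc4} and a suitable GRMC, by invoking Construction~\ref{comb}. The statement of Theorem~\ref{con4} is the $\delta=2$ specialization of exactly the pattern used to pass from Lemma~\ref{mul-1} to Theorem~\ref{new-3}: Lemma~\ref{stc4} supplies a multilevel-constructed $(n,M_1,4,k)_q$-CDC whose identifying vectors all satisfy $wt(\boldsymbol v^{(1)})\geq k-2$, and this plays the role of the $M_1$-term, while an $A^R_q(k\times(n-k),2,[0,k-4])_q$-GRMC supplies the parallel part.

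The key step is to verify that the hypotheses of Construction~\ref{comb} hold with the choice $\delta=2$ and $s=k-2$. First I would note that the weight condition $wt(\boldsymbol v^{(1)})\geq s\geq\delta$ required by Construction~\ref{comb} becomes $wt(\boldsymbol v^{(1)})\geq k-2\geq 2$; the left inequality is precisely the conclusion of Lemma~\ref{stc4}, and the right inequality $k-2\geq 2$ follows from the hypothesis $k\geq 4$. Then, with $s=k-2$, the GRMC demanded by Construction~\ref{comb} is a $(k\times(n-k),M_2,\delta,[0,s-\delta])_q=(k\times(n-k),M_2,2,[0,k-4])_q$-GRMC, and taking $M_2=A^R_q(k\times(n-k),2,[0,k-4])$ realizes the largest such code by definition of $A^R_q$. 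Feeding these two codes into Construction~\ref{comb} yields an $(n,M_1+M_2,4,k)_q$-CDC, so $\bar A_q(n,4,k)\geq M_1+M_2$, which is exactly the claimed bound once $M_1$ is written out as in Lemma~\ref{stc4}.

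I would also record that the range constraints match up cleanly: Construction~\ref{comb} only requires $n\geq 2k$, which is implied by the stronger hypothesis $n\geq 2k+2$ of the theorem, and Lemma~\ref{stc4} requires exactly $n\geq 2k+2$ and $k\geq 4$, the same hypotheses stated in the theorem. Thus there is no gap in parameters, and the resulting CDC automatically contains a lifted MRD code as a subset since the $M_1$-part does.

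There is essentially no serious obstacle here, since the real work has been isolated into the two prior results; the only point demanding attention is the bookkeeping of the parameter identifications $\delta=2$, $s=k-2$, $s-\delta=k-4$, so that the GRMC invoked has precisely the rank set $[0,k-4]$ appearing in the theorem, and the confirmation that the weight bound from Lemma~\ref{stc4} is strong enough to meet the threshold $s$ demanded by Construction~\ref{comb}. Once those identifications are checked, the proof is a one-line application: apply Lemma~\ref{stc4} together with Construction~\ref{comb} taking $s=k-2$.
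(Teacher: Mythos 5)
Your proposal is correct and matches the paper's proof exactly: the paper also obtains Theorem \ref{con4} by applying Lemma \ref{stc4} together with Construction \ref{comb} with $s=k-2$ (hence $\delta=2$ and rank set $[0,s-\delta]=[0,k-4]$). Your additional bookkeeping of the parameter ranges is consistent with, and slightly more explicit than, the paper's one-line argument.
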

\begin{proof} Applying Lemma \ref{stc4} and Construction \ref{comb} with $s=k-2$. \qed
\end{proof}

With the aid of a computer, we compared the values of ${\bar A}_q(n,4,k)$ from Theorems \ref{new-3} and \ref{con4}. It seems that when $k>4$ and $n$ is large enough, Theorem \ref{con4} always produces better lower bound on ${\bar A}_q(n,4,k)$ than that in Theorem \ref{new-3} (see Table 2 in Appendix B for example).

More specially, for $\delta=4$ and $k=5$, Silberstein and Trautmann in \cite{st15} presented an $(n,M_1,4,5)_q$-CDC with larger size than that in Lemma \ref{stc4} by choosing identifying vectors more carefully.

\begin{lemma}{\rm \cite[Construction C-5, Theorem 35]{st15}}\label{cdc5}
Let $n\geq 12$ and $q$ be any prime power. If $x$ is even, then let
\begin{align*}
M(x)&:=q^{4(x-5)}+(q^{2x-10}+q^{2x-14})(q^{2x-14}+\frac{x-8}{2}q^{x-9})+(q^{2x-11}+q^{2x-13})\\& \times(\frac{x-8}{2}q^{x-10}+q^{2x-15})+(q^{2x-12}+q^{2x-13})(2q^{2x-16}+\frac{x-10}{2}q^{x-11})\\
&+(q^{2x-12}+q^{2x-14})\times\left[\sum_{i=3}^{\min\{\lceil\frac{q}{2}\rceil+2,\lfloor\frac{x-5}{2}\rfloor\}}
(iq^{2x-2i-12}+(\frac{x-6}{2}-i)q^{x-2i-7})\right.\\&\left.+\sum_{i=2}^{\min\{\lfloor\frac{q}{2}\rfloor+2,
\lceil\frac{x-7}{2}\rceil\}}
(iq^{2x-2i-13}+(\frac{x-6}{2}-i)q^{x-2i-8})\right].
\end{align*}
If $x$ is odd, then let
\begin{align*}
M(x)&:= q^{4(x-5)}+(q^{2x-10}+q^{2x-14})(q^{2x-14}+\frac{x-9}{2}q^{x-8}+q^{\frac{x-9}{2}})+(q^{2x-11}+q^{2x-13})\\& \times(\frac{x-9}{2}q^{x-9}+q^{2x-15}+q^{x-8})+(q^{2x-12}+q^{2x-13})
(q^{2x-16}+\frac{x-11}{2}q^{x-10}+q^{\frac{x-11}{2}})\\
&+(q^{2x-12}+q^{2x-14})\times\left[\sum_{i=3}^{\min\{\lceil\frac{q}{2}\rceil+2,\lfloor\frac{x-5}{2}\rfloor\}}
(iq^{2x-2i-12}+(\frac{x-7}{2}-i)q^{x-2i-6}+q^{\frac{x-7}{2}-i})\right.\\
&\left.+\sum_{i=2}^{\min\{\lfloor\frac{q}{2}\rfloor+2,\lceil\frac{x-7}{2}\rceil\}}
(iq^{2x-2i-13}+(\frac{x-7}{2}-i)q^{x-2i-7}+q^{x-i-7})\right].
\end{align*}
Let
$$M_1=\sum_{j=0}^{\lfloor\frac{n-12}{5}\rfloor} M(n-5j).$$
Then there exists an $(n,M_1,4,5)_q$-CDC constructed via the multilevel construction satisfying that for any of its identifying vectors $\boldsymbol v=(\underbrace{\boldsymbol v^{(1)}}_{n-5}\mid \underbrace{\boldsymbol v^{(2)}}_{5})$, it holds that wt$(\boldsymbol v^{(1)})\geq 3$, and this CDC contains a lifted MRD code $(n,q^{4(n-5)},4,5)_q$-CDC as a subset.
\end{lemma}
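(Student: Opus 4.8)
The plan is to mirror the proof of Lemma~\ref{stc4}: I would reuse the identifying vectors of Construction~C-5 in \cite{st15} without modification and verify only the two features needed here, namely the weight bound wt$(\boldsymbol v^{(1)})\geq 3$ and the existence of a lifted MRD code as a subcode. The size $M_1=\sum_{j=0}^{\lfloor(n-12)/5\rfloor}M(n-5j)$ is exactly the cardinality produced by Construction~C-5 as computed in Theorem~35 of \cite{st15}; since the construction is unchanged, this count transfers verbatim and I would cite it rather than re-derive the underlying FDRMC dimensions. As in Lemma~\ref{stc4}, the identifying vectors are assembled in blocks of length $k=5$: an index $j$ records how many leading blocks of five zeros precede the active window, a length-$5$ pattern of weight close to five is placed next, and the remaining $n-5(j+1)$ coordinates are filled by the lower-level vectors of the recursion, whose shapes depend on the parity of $x=n-5j$ and, through the summation ranges, on $q$. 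I would list all of these vectors explicitly, so that the location of every one relative to the split point $n-5$ becomes transparent.

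The heart of the argument is the weight condition. Writing an identifying vector as $\boldsymbol v=(\boldsymbol v^{(1)}\mid\boldsymbol v^{(2)})$ with $|\boldsymbol v^{(1)}|=n-5$ and $|\boldsymbol v^{(2)}|=5$, the bound wt$(\boldsymbol v^{(1)})\geq 3$ is equivalent to wt$(\boldsymbol v^{(2)})\leq 2$, i.e. at most two of the five ones lie among the last five coordinates. Because the recursion shifts the active window five positions to the left at each step and every listed trailing pattern deposits weight at most two in its rightmost block, this inequality should hold uniformly. I expect this bookkeeping to be the main obstacle: Construction~C-5 separates into the even and odd cases for $x$, each carrying several summation ranges governed by $q$, and the terminal pair of ones is placed slightly differently in each subcase, so one must confirm case by case that no third one slips past coordinate $n-5$.

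Finally, the lifted MRD subcode arises exactly as in Lemma~\ref{stc4}: the identifying vector $(\underbrace{1\ldots1}_{5}\underbrace{0\ldots0}_{n-5})$, the $j=0$ term carrying its full weight in the leading block, has echelon Ferrers form $(\boldsymbol I_5\mid\mathcal F)$ with $\mathcal F$ a full $5\times(n-5)$ Ferrers diagram. Since $n\geq12$ forces $n-5>5$, an optimal $[\mathcal F,4(n-5),2]_q$ code exists, and lifting it yields a lifted MRD $(n,q^{4(n-5)},4,5)_q$-CDC contained in the resulting CDC. This supplies the required subcode and completes the sketch.
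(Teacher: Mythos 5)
Your proposal matches the paper's proof essentially verbatim: the paper likewise lists the Construction C-5 identifying vectors explicitly, cites Theorem 35 of \cite{st15} for the count $M_1$, and observes that $j\leq\lfloor\frac{n-12}{5}\rfloor$ forces $5j+5<n-5$, so the weight-$3$ middle block lies entirely in the first $n-5$ coordinates and the tail $\boldsymbol u$ contributes at most its total weight $2$ to $\boldsymbol v^{(2)}$ — no case-by-case analysis over the parity of $x$ or the $q$-dependent summation ranges is needed (and note the middle blocks have weight $3$, not ``close to five''). The lifted MRD subcode from $(11111\mid 0\ldots0)$ is identified exactly as you describe.
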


\begin{proof}(sketch only)
We here employ the same set of identifying vectors as those in Construction C-5 of \cite{st15} (note that the identifying vectors are exhibited in \cite{st15} recursively, and here we list all of them explicitly):
\begin{align*}
&\{(11111\mid \underbrace{0\ldots0)}_{n-5}\}\ \cup\\
&\{(\underbrace{0\ldots0}_{5j}\mid 11100\mid \boldsymbol u),(\underbrace{0\ldots0}_{5j}\mid 10011\mid \boldsymbol u):\ \boldsymbol u\in P_{j,\lceil\frac{n-5j-5}{2}\rceil+1},\ 0\leq j\leq \lfloor\frac{n-12}{5}\rfloor\}\ \cup\\
&\{(\underbrace{0\ldots0}_{5j}\mid11010\mid\boldsymbol u),(\underbrace{0\ldots0}_{5j}\mid01101\mid\boldsymbol u):\ \boldsymbol u\in P_{j,2},\ 0\leq j\leq \lfloor\frac{n-12}{5}\rfloor\}\ \cup\\
&\{(\underbrace{0\ldots0}_{5j}\mid 01110\mid \boldsymbol u),(\underbrace{0\ldots0}_{5j}\mid 10101\mid \boldsymbol u):\ \boldsymbol u\in P_{j,\lceil\frac{n-5j-5}{2}\rceil+2},\ 0\leq j\leq \lfloor\frac{n-12}{5}\rfloor\}\ \cup\\
&\{(\underbrace{0\ldots0}_{5j}\mid 00111\mid\boldsymbol u),(\underbrace{0\ldots0}_{5j}\mid11001\mid\boldsymbol u):\ \boldsymbol u\in P_{j,3},\ 0\leq j\leq \lfloor\frac{n-12}{5}\rfloor\}\ \cup\\
&\{(\underbrace{0\ldots0}_{5j}\mid 10110\mid \boldsymbol u),(\underbrace{0\ldots0}_{5j}\mid 01011\mid \boldsymbol u):\ 0\leq j\leq \lfloor\frac{n-12}{5}\rfloor,\\
&\boldsymbol u\in \left(\bigcup_{i=3}^{\min\{\lceil\frac{q}{2}\rceil+2,\lfloor\frac{n-5j-5}{2}\rfloor\}}P_{j,\lceil\frac{n-5j-5}{2}
\rceil+i}\right)\cup\left(\bigcup_{i=4}^{\min\{\lfloor\frac{q}{2}\rfloor+3,\lceil\frac{n-5j-5}{2}\rceil\}}P_{j,i}
\right)\},
\end{align*}
where each vector in $P_{j,l}$ has size $n-5j-5$ and weight $2$; if $n-5j-5$ is even, then the positions of ones in vectors from $P_{j,2},\ldots,P_{j,n-5j-5}$ correspond to a one-factorization of the complete graph $K_{n-5j-5}$; if $n-5j-5$ is odd, then the positions of ones in vectors from $P_{j,1},P_{j,2},\ldots,P_{j,n-5j-5}$ correspond to a near one-factorization of the complete graph $K_{n-5j-5}$ (see \cite[Construction C-5]{st15} for more details).

Since $j\leq\lfloor\frac{n-12}{5}\rfloor$, we have $5j+5< n-5$. Then for any identifying vector $\boldsymbol v=(\underbrace{\boldsymbol v^{(1)}}_{n-5}|\underbrace{\boldsymbol v^{(2)}}_{5})$ shown above, one can check that wt$(\boldsymbol v^{(1)})\geq 3$. By Construction C-5 and Theorem 35 in \cite{st15}, these identifying vectors generate an $(n,M_1,4,5)_q$-CDC. This CDC contains a lifted MRD code $(n,q^{4(n-5)},4,5)_q$-CDC as a subset which comes from the identifying vector $(11111\mid\underbrace{0\ldots0}_{n-5})$. \qed
\end{proof}

\begin{theorem} \label{cdc45}
Let $M_1$ be as in Lemma $\ref{cdc5}$ and $n\geq 12$. Then
\begin{center}
 ${\bar A}_q(n,4,5)\geq M_1+q^4+q^3+q^2+q+1$.
\end{center}
\end{theorem}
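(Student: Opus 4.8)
The plan is to combine Lemma~\ref{cdc5} with the first parallel-multilevel construction (Construction~\ref{comb}) and then estimate the extra GRMC block by means of the lower bound in Proposition~\ref{grmc}. Lemma~\ref{cdc5} supplies an $(n,M_1,4,5)_q$-CDC built from the multilevel construction all of whose identifying vectors $\boldsymbol v=(\boldsymbol v^{(1)}\mid \boldsymbol v^{(2)})$ obey $\mathrm{wt}(\boldsymbol v^{(1)})\geq 3$. Hence Construction~\ref{comb} applies with $k=5$, $\delta=2$ and $s=3$ (note $s=3\geq\delta=2$): whenever a $(5\times(n-5),M_2,2,[0,1])_q$-GRMC exists, we obtain an $(n,M_1+M_2,4,5)_q$-CDC, so $\bar A_q(n,4,5)\geq M_1+M_2$. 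It therefore remains to produce such a GRMC with $M_2=q^4+q^3+q^2+q+1$.

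To bound $M_2$ I would invoke Proposition~\ref{grmc}. Since transposition preserves rank, $A^R_q(5\times(n-5),2,[0,1])=A^R_q((n-5)\times 5,2,[0,1])$, and in the latter form the first dimension $n-5$ dominates the second dimension $5$ for all $n\geq 12$, so Proposition~\ref{grmc} is applicable with $m=n-5$, second parameter $5$, $\delta=2$, $t_1=0$ and $t_2=1$. Here $t_2=1<\delta=2$, so the second branch of Proposition~\ref{grmc} is the relevant one, giving (with the only admissible value $a=1$)
\[
A^R_q((n-5)\times 5,2,[0,1])\geq\left\lceil\frac{A_1(1)}{q^{(n-5)(2-1)}-1}\right\rceil,
\]
where $A_1(1)$ counts the rank-$1$ codewords of an MRD$[(n-5)\times 5,1]_q$ code.

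The key computation is that an MRD code of minimum rank distance $1$ meets the Singleton-like bound with dimension $5(n-5)$, hence is the full matrix space $\mathbb F_q^{(n-5)\times 5}$; so by Theorem~\ref{distribution} (with $\delta=1$, $i=0$) its number of rank-$1$ matrices is $A_1(1)=\begin{bmatrix}5\\1\end{bmatrix}_q(q^{n-5}-1)=\frac{(q^5-1)(q^{n-5}-1)}{q-1}$. Dividing by $q^{n-5}-1$ cancels that factor exactly, leaving $\frac{q^5-1}{q-1}=q^4+q^3+q^2+q+1$, which is already an integer, so the ceiling operation has no effect. Thus $M_2\geq q^4+q^3+q^2+q+1$, and combining with the previous step yields $\bar A_q(n,4,5)\geq M_1+q^4+q^3+q^2+q+1$.

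The arithmetic is routine; the only points that require care are checking the hypotheses line up (the weight threshold $s=3\geq\delta=2$ from Lemma~\ref{cdc5}, and the orientation $m\geq n$ demanded by Proposition~\ref{grmc}, which forces the transposition step), and recognizing that the effective branch of Proposition~\ref{grmc} is the $t_2<\delta$ one, driven by the Gabidulin-code coset argument, rather than the trivial $t_2\geq\delta$ estimate, which would only give $M_2\geq 1$.
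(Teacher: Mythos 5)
Your proposal is correct and follows exactly the paper's route: Lemma~\ref{cdc5} combined with Construction~\ref{comb} at $s=3$, followed by the $t_2<\delta$ branch of Proposition~\ref{grmc} with $a=1$ and the rank distribution from Theorem~\ref{distribution} to get $A^R_q(5\times(n-5),2,[0,1])\geq q^4+q^3+q^2+q+1$. The only difference is that you spell out the computation (including the transposition needed to meet the $m\geq n$ hypothesis and the exact cancellation of $q^{n-5}-1$) that the paper compresses into ``one can check.''
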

\begin{proof}
Applying Lemma \ref{cdc5} and Construction \ref{comb} with $s=3$ , we have
\begin{center}
${\bar A}_q(n,4,5)\geq M_1+A^R_q(5\times(n-5),2,\{0,1\})$.
\end{center}
By Proposition \ref{grmc} and Theorem \ref{distribution}, one can check that
\begin{center}
$A^R_q(5\times(n-5),2,\{0,1\})\geq q^4+q^3+q^2+q+1$.
\end{center}
This completes the proof. \qed
\end{proof}

Theorem \ref{cdc45} provides many new $(n,4,5)_q$-CDCs with larger size than the previously best known codes in \cite{hkkw}. We list some of them in Table 2 in Appendix B. Compared with Theorem \ref{cdc45}, Theorems \ref{new-3} and \ref{con4} produce worse lower bounds of ${\bar A}_q(n,4,5)$ for those $n$ in Table 2.

\subsubsection{The second construction}

In Construction \ref{comb}, we start from a multilevel construction, and then choose an appropriate parallel construction based on the identifying vectors in the multilevel construction. Actually, we can also start from a parallel construction, and then choose appropriate identifying vectors to use the multilevel construction.

\begin{construction}\label{new}
Let $n\geq2k$ and $k\geq 2\delta$. If there exists a $(k\times (n-k),M,\delta,[0,k-\delta])_q$-GRMC, then there exists an $(n, q^{(n-k)(k-\delta+1)}+M+q^{\max\{l_1,l_2\}}+q^{(n-k-\delta)(k-2\delta+1)}, 2\delta,k)_q$-CDC, where
 \begin{equation}
\label{}\nonumber
l_1=\left \{
\begin {aligned}
&(k-\delta)\delta+n-k-\delta,~~{\rm if}~n\geq k+3\delta;\\
&\delta(n-4\delta+2),~~{\rm if}~n<k+3\delta,\\
\end {aligned}
\right.
\end{equation}
and $$l_2= \max_{1\leq j\leq \delta-1} \{\min\{(\delta-j+1)(k-\delta),(j+1)(n-k-\delta)\}\}.$$
This CDC contains a lifted MRD code $(n,q^{(n-k)(k-\delta+1)},2\delta,k)_q$-CDC as a subset.
\end{construction}

\begin{proof}
Let $\mathcal D_1$ be an MRD$[k\times (n-k),\delta]_q $ code and $\mathcal D_2$ be a $(k\times (n-k),M,\delta,[0,k-\delta])_q $-GRMC. Set $$\mathcal C_1=\{{\rm rowspace}(\boldsymbol I_k\mid \boldsymbol A):\boldsymbol A\in \mathcal D_1\}$$ and $$\mathcal C_2=\{{\rm rowspace}(\boldsymbol B\mid \boldsymbol I_k): \boldsymbol B \in \mathcal D_2\}.$$
Since $n\geq 2k$, $|\mathcal C_1|=q^{(n-k)(k-\delta+1)}$. Note that the identifying vector of each codeword in $\mathcal C_1$ is $\boldsymbol n=(\underbrace{1\ldots1}_{k}\underbrace{0\ldots0}_{n-k})$.
Now we take two new identifying vectors $\boldsymbol n_1$ and $\boldsymbol n_2$.

\underline{Step 1.}
Take $$\boldsymbol n_1=(\underbrace{1\ldots1}_{k-\delta}\underbrace{0\ldots0}_{\delta}\underbrace{1\ldots1}_{\delta}
\underbrace{0\ldots0}_{n-k-\delta}).$$ Note that $d_H(\boldsymbol n,\boldsymbol n_1)=2\delta$. Then
\begin{center}
$EF(\boldsymbol n_1)=\left(
\begin{array}{cccc}
\boldsymbol I_{k-\delta} & \mathcal F_1 & \boldsymbol O_1 & \mathcal F_3 \\
\boldsymbol O_2 & \boldsymbol O_3 & \boldsymbol I_{\delta} & \mathcal F_2 \\
\end{array}
\right)$,
\end{center}
where $\mathcal F_1$ is a $(k-\delta)\times \delta$ full Ferrers diagram, $\mathcal F_2$ is a $\delta\times (n-k-\delta)$ full Ferrers diagram, $\mathcal F_3$ is a $(k-\delta)\times (n-k-\delta)$ full Ferrers diagram, $\boldsymbol O_1$ is a $(k-\delta)\times \delta$ zero matrix, $\boldsymbol O_2$ is a $\delta\times (k-\delta)$ zero matrix and $\boldsymbol O_3$ is a $\delta\times \delta$ zero matrix. Let
$$\mathcal F=\left(
\begin{array}{cc}
\mathcal F_1 & \mathcal F_3 \\
& \mathcal F_2 \\
\end{array}
\right)$$ be a $k\times (n-k)$ Ferrers diagram and
\begin{center}
 $\mathcal C_3=\left\{{\rm rowspace}\left(
                     \begin{array}{cccc}
                       \boldsymbol I_{k-\delta} & \boldsymbol D_1 & \boldsymbol O_1 & \boldsymbol D_3\\
                       \boldsymbol O_2 & \boldsymbol O_3 & \boldsymbol I_{\delta} & \boldsymbol D_2 \\
                     \end{array}
                   \right): \left(
                                                                \begin{array}{cc}
                                                                  \boldsymbol D_1 & \boldsymbol D_3 \\
                                                                  \boldsymbol O_3 & \boldsymbol D_2 \\
                                                                \end{array}
                                                              \right)
                   \in \mathcal D_{\mathcal F} \right\}$,
\end{center}
where $\mathcal D_{\mathcal F}$ is a Ferrers diagram rank-metric code in $\mathcal F$ with minimum rank distance $\delta$, which will be constructed as follows in two different ways.

(1) We claim that there exists an $[\mathcal F,(k-\delta)\delta+n-k-\delta,\delta]_q$ code $\mathcal D'_{\mathcal F}$ when $n\geq k+3\delta$, and there exists an $[\mathcal F,\delta(n-4\delta+2),\delta]_q$ code $\mathcal D'_{\mathcal F}$ when $2k\leq n< k+3\delta$.

To examine the existence of such FDRMC codes, take an $(n-k-\delta)\times 2\delta$ Ferrers diagram
\begin{center}
$\mathcal F_{12}=(\mathcal F_1\mid \mathcal F^t_2)=[\underbrace{k-\delta,\ldots,k-\delta}_{\delta},\underbrace{n-k-\delta,\ldots,n-k-\delta}_{\delta}]$.
\end{center}
Note that
\begin{center}
$\mathcal F^t_{12}=[\underbrace{\delta,\ldots,\delta}_{n-2k},\underbrace{2\delta,\ldots,2\delta}_{k-\delta}]$.
\end{center}
By Theorem \ref{thm:shortening}, when $n-k-\delta\geq 2\delta$, i.e., $n\geq k+3\delta$, there exists an optimal $[\mathcal F_{12},(k-\delta)\delta+n-k-\delta,\delta]_q$ code, and when $n< k+3\delta$, since $k-\delta\geq \delta$, there exists an optimal $[\mathcal F^t_{12},\delta(n-4\delta+2),\delta]_q$ code, which yields an $[\mathcal F_{12},\delta(n-4\delta+2),\delta]_q$ code. Now applying Theorem \ref{thm:combine with same distance}, we obtain an $[\mathcal F,(k-\delta)\delta+n-k-\delta,\delta]_q$ code $\mathcal D'_{\mathcal F}$ when $n\geq k+3\delta$, and an $[\mathcal F,\delta(n-4\delta+2),\delta]_q$ code $\mathcal D'_{\mathcal F}$ when $2k\leq n< k+3\delta$. Note that each codeword in $\mathcal D'_{\mathcal F}$ is of the form
$\left(
\begin{array}{cc}
* & \boldsymbol O_4 \\
\boldsymbol O_3 & * \\
\end{array}
\right),$ where $\boldsymbol O_4$ is a $(k-\delta)\times (n-k-\delta)$ zero matrix.

(2) We claim that there exists an
$$[\mathcal F,\max\limits_{1\leq j\leq \delta-1} \left\{\min\{(\delta-j+1)(k-\delta),(j+1)(n-k-\delta)\}\right\},\delta]_q$$ code $\mathcal D''_{\mathcal F}$. Its existence comes from Theorem \ref{thm:combine with same dim} by using an optimal $[\mathcal F_1, (\delta-j+1)(k-\delta),j]_q$ code, i.e., an MRD$[(k-\delta)\times \delta,j]_q$ code, and an optimal $[\mathcal F_2, (j+1)(n-k-\delta),\delta-j]_q$ code, i.e., an MRD$[\delta\times(n-k-\delta),\delta-j]_q$ code (note that $\delta\leq n-k-\delta$ since $n\geq2k$ and $k\geq 2\delta$). Each codeword in $\mathcal D''_{\mathcal F}$ is of the form
$\left(
\begin{array}{cc}
* & \boldsymbol O_4 \\
\boldsymbol O_3 & * \\
\end{array}
\right).$

Let $l_1$ and $l_2$ be as in the assumption. If $l_1\geq l_2$, then take $\mathcal D_{\cal F}=\mathcal D'_{\cal F}$. Otherwise, take $\mathcal D_{\cal F}=\mathcal D''_{\cal F}$. Thus $|\mathcal C_3|=|\mathcal D_{\cal F}|=q^{\max\{l_1,l_2\}}$. Note that $\boldsymbol D_3=\boldsymbol O_4$ in both cases.

\underline{Step 2.} Take $$\boldsymbol n_2=(\underbrace{1\ldots1}_{k-2\delta}\underbrace{0\ldots0}_{\delta}\underbrace{1\ldots1}_{\delta}
\underbrace{1\ldots1}_{\delta}\underbrace{0\ldots0}_{n-k-\delta}).$$
Note that $d_H(\boldsymbol n,\boldsymbol n_2)=d_H(\boldsymbol n_1,\boldsymbol n_2)=2\delta$. Then
\begin{center}
$EF(\boldsymbol n_2)=\left(
   \begin{array}{ccccc}
     \boldsymbol I_{k-2\delta} & \mathcal F_4 & \boldsymbol O_5 & \boldsymbol O_5 & \mathcal F_5 \\
     \boldsymbol O_6 & \boldsymbol O_7 & \boldsymbol I_{\delta} & \boldsymbol O_7 & \mathcal F_6 \\
     \boldsymbol O_6 & \boldsymbol O_7 & \boldsymbol O_7 & \boldsymbol I_{\delta} & \mathcal F_7 \\
   \end{array}
 \right)$,
\end{center}
where $\mathcal F_4$ is a $(k-2\delta)\times \delta$ full Ferrers diagram, $\mathcal F_5$ is a $(k-2\delta)\times (n-k-\delta)$ full Ferrers diagram, $\mathcal F_6$ and $\mathcal F_7$ are $\delta\times (n-k-\delta)$ full Ferrers diagrams, $\boldsymbol O_5$ is a $(k-2\delta)\times \delta$ zero matrix, $\boldsymbol O_6$ is a $\delta\times (k-2\delta)$ zero matrix and $\boldsymbol O_7$ is a $\delta\times \delta$ zero matrix. Let
\begin{center}
$\mathcal C_4=\left\{{\rm rowspace}\left(
   \begin{array}{ccccc}
     \boldsymbol I_{k-2\delta} & \boldsymbol O_5 & \boldsymbol O_5 & \boldsymbol O_5 & \boldsymbol D_4 \\
     \boldsymbol O_6 & \boldsymbol O_7 & \boldsymbol I_{\delta} & \boldsymbol O_7 & \boldsymbol O_8 \\
     \boldsymbol O_6 & \boldsymbol O_7 & \boldsymbol O_7 & \boldsymbol I_{\delta} & \boldsymbol D_5 \\
   \end{array}
 \right):\left(
           \begin{array}{c}
             \boldsymbol D_4 \\
             \boldsymbol D_5 \\
           \end{array}
         \right)\in \mathcal D_3
 \right\}$,
\end{center} where $\mathcal D_3$ is an MRD$[(k-\delta)\times (n-k-\delta),\delta]_q$ code. Since $n-k\geq k$, $|\mathcal C_4|=|\mathcal D_3|=q^{(n-k-\delta)(k-2\delta+1)}$.

Let $\mathcal C=\mathcal C_1\cup\mathcal C_2\cup\mathcal C_3\cup\mathcal C_4$. Then $\mathcal C$ is an $(n, q^{(n-k)(k-\delta+1)}+M+q^{\max\{l_1,l_2\}}+q^{(n-k-\delta)(k-2\delta+1)}, 2\delta,k)_q$-CDC.

It suffices to examine the subspace distance of $\cal C$. Let $\mathcal U_i\in \mathcal C_i$ for $1\leq i\leq 4$. Since $d_H(\boldsymbol n,\boldsymbol n_1)=d_H(\boldsymbol n,\boldsymbol n_2)=d_H(\boldsymbol n_1,\boldsymbol n_2)=2\delta$, by Lemma \ref{lem:efc-2}, $d_S(\mathcal U_1,\mathcal U_3)$, $d_S(\mathcal U_1,\mathcal U_4)$ and $d_S(\mathcal U_3,\mathcal U_4)$ are all no less than $2\delta$. Applying Construction \ref{comb} with $s=k$, we have $d_S(\mathcal U_1,\mathcal U_2)\geq 2\delta$. Since $k\geq2\delta$,
\begin{align*}
d_S(\mathcal U_2,\mathcal U_3)&
=2\cdot {\rm rank} \left(
                                        \begin{array}{c;{2pt/2pt}c}
                                          \begin{array}{ccc}
                                             \boldsymbol I_{k-\delta} & \boldsymbol D_1 & \boldsymbol O_{9} \\
                                             \boldsymbol O_2 & \boldsymbol O_3 & \boldsymbol D_6
                                           \end{array}
                                           & \begin{array}{c}
                                               \boldsymbol O_{10}  \\
                                               \boldsymbol D_7
                                             \end{array}
                                            \\ \hdashline[2pt/2pt]
                                         \boldsymbol B & \boldsymbol I_k \\
                                        \end{array}
                                      \right)-2k\\
& \geq 2 \left({\rm rank} \left(\begin{array}{ccc}
\boldsymbol I_{k-\delta} & \boldsymbol D_1 & \boldsymbol O_{9} \\
\end{array}\right)+{\rm rank} (\boldsymbol I_k)
\right)-2k=2(k-\delta)\geq 2\delta,
\end{align*}
where $(\underbrace{\boldsymbol O_{9}}_{n-2k}\mid \underbrace{\boldsymbol O_{10}}_{k})=(\boldsymbol O_{1}\mid \boldsymbol O_{4})$ and $(\underbrace{\boldsymbol D_6}_{n-2k}\mid \underbrace{\boldsymbol D_7}_{k})=(\boldsymbol I_{\delta}\mid \boldsymbol D_{2})$.
Similarly we have
\begin{center}
 $d_S(\mathcal U_2,\mathcal U_4)=2\cdot {\rm rank}
\left(
  \begin{array}{c;{2pt/2pt}c}
    \begin{array}{cccc}
     \boldsymbol I_{k-2\delta} & \boldsymbol O_5 & \boldsymbol O_5 & \boldsymbol D_8  \\
     \boldsymbol O_6 & \boldsymbol O_7 & \boxed{\boldsymbol I_{\delta}} & \boldsymbol O_{11}  \\
     \boldsymbol O_6 & \boldsymbol O_7 & \boldsymbol O_7 & \boldsymbol D_{10}
    \end{array}
     & \begin{array}{c}
                                                        \boldsymbol D_9 \\
                                                       \boldsymbol O_{12} \\
                                                       \boldsymbol D_{11}
       \end{array}
      \\ \hdashline[2pt/2pt]
 \boldsymbol B & \boxed{\boldsymbol I_k} \\
  \end{array}
\right)-2k\geq 2\delta$,
\end{center}
where $(\underbrace{\boldsymbol D_{8}}_{n-2k}\mid \underbrace{\boldsymbol D_{9}}_{k})=(\boldsymbol O_{5}\mid \boldsymbol D_{4})$, $(\underbrace{\boldsymbol O_{11}}_{n-2k}\mid\underbrace{\boldsymbol O_{12}}_{k})=(\boldsymbol O_{7}\mid \boldsymbol O_{8})$ and $(\underbrace{\boldsymbol D_{10}}_{n-2k}\mid\underbrace{\boldsymbol D_{11}}_{k})=(\boldsymbol I_{\delta}\mid\boldsymbol D_{5})$.
Finally, by Lemma \ref{lem:efc-1}, for any $\mathcal U,\mathcal V \in \mathcal C_i$ and $\mathcal U\neq \mathcal V$, $i\in\{1,2,3,4\}$, $d_S(\mathcal U,\mathcal V)\geq 2\delta$. \qed
\end{proof}

In the proof of Construction \ref{new}, three identifying vectors are used: $(\underbrace{1\ldots1}_{k}\underbrace{0\ldots0}_{n-k})$, $ (\underbrace{1\ldots1}_{k-\delta}\underbrace{0\ldots0}_{\delta}\underbrace{1\ldots1}_{\delta}
\underbrace{0\ldots0}_{n-k-\delta})$ and $(\underbrace{1\ldots1}_{k-2\delta}\underbrace{0\ldots0}_{\delta}\underbrace{1\ldots1}_{\delta}
\underbrace{1\ldots1}_{\delta}\underbrace{0\ldots0}_{n-k-\delta})$. Since $n\geq 2k$, for each identifying vectors $\boldsymbol v=(\underbrace{\boldsymbol v^{(1)}}_{n-k}\mid \underbrace{\boldsymbol v^{(2)}}_{k})$, it holds that $wt(\boldsymbol v^{(1)})\geq k-\delta$. Therefore, if we apply Construction \ref{comb} via the three identifying vectors to produce CDCs, then we have to construct a $(k\times (n-k),M_2,\delta,[0,k-2\delta])_q$-GRMC. However, in Construction \ref{new}, we can use a $(k\times (n-k),M,\delta,[0,k-\delta])_q$-GRMC. Generally $M\geq M_2$. From this point of view, Construction \ref{new} is better than Construction \ref{comb}. But in Construction \ref{comb}, one can choose other identifying vectors flexibly to change the details of the multilevel construction. From this point of view, Construction \ref{comb} is better. Anyway, compared with Construction \ref{comb}, Construction \ref{new} is easier to be used since its statement does not rely on the choice of identifying vectors.

Construction \ref{new} together with the use of Proposition \ref{grmc} provides many new constant dimension codes with larger size than the previously best known codes in \cite{hkkw}. We list some of them in Table 3 in Appendix B. Especially, when $n=2k=4\delta$, we obtain new lower bound on  ${\bar A}_q(n,2\delta,k)$.


\begin{corollary}\label{cor2}
${\bar A}_q(4\delta,2\delta,2\delta)\geq q^{2\delta(\delta+1)}+(q^{2\delta}-1)\begin{bmatrix}
 2\delta \\
 \delta \\
 \end{bmatrix}_q+q^{l}+q^{\delta}+1$, where
 \begin{equation}
\label{}\nonumber
l=\left \{
\begin {aligned}
&2,~~{\rm if}~\delta=1;\\
&(\lfloor\frac{\delta}{2}\rfloor+1)\delta,~~{\rm if}~\delta\geq 2.\\
\end {aligned}
\right.
\end{equation}
\end{corollary}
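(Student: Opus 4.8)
The plan is to specialize Construction \ref{new} to the parameters $n=4\delta$ and $k=2\delta$ and then evaluate each of the four summands in the resulting code size. First I would check the hypotheses: $n\geq 2k$ holds with equality ($4\delta=4\delta$) and $k\geq 2\delta$ also holds with equality, so Construction \ref{new} applies and guarantees an $(n,q^{(n-k)(k-\delta+1)}+M+q^{\max\{l_1,l_2\}}+q^{(n-k-\delta)(k-2\delta+1)},2\delta,k)_q$-CDC containing a lifted MRD code as a subset. Substituting the parameters, the lifted MRD term is $q^{(n-k)(k-\delta+1)}=q^{2\delta(\delta+1)}$ and the last term is $q^{(n-k-\delta)(k-2\delta+1)}=q^{\delta\cdot 1}=q^{\delta}$, which already account for two of the summands in the claimed bound.

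Next I would supply the GRMC required by Construction \ref{new}, namely a $(k\times(n-k),M,\delta,[0,k-\delta])_q$-GRMC, which here is a $(2\delta\times 2\delta,M,\delta,[0,\delta])_q$-GRMC. Since the upper index $t_2=\delta$ satisfies $t_2\geq\delta$, the first branch of Proposition \ref{grmc} gives $M=\sum_{i=0}^{\delta}A_i(\delta)$, the number of codewords of rank in $[0,\delta]$ in an MRD$[2\delta\times 2\delta,\delta]_q$ code. By the rank distribution in Theorem \ref{distribution} (with $m=n=2\delta$), only ranks $0$ and $\delta$ occur in this range, with $A_0=1$ and $A_{\delta}=\begin{bmatrix}2\delta\\\delta\end{bmatrix}_q(q^{2\delta}-1)$. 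Hence $M=1+(q^{2\delta}-1)\begin{bmatrix}2\delta\\\delta\end{bmatrix}_q$, matching the two remaining summands $(q^{2\delta}-1)\begin{bmatrix}2\delta\\\delta\end{bmatrix}_q$ and $1$.

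The remaining work, and the main technical point, is to evaluate $l=\max\{l_1,l_2\}$. Because $n=4\delta<5\delta=k+3\delta$, the second branch of $l_1$ applies, giving $l_1=\delta(n-4\delta+2)=2\delta$. For $l_2$, substituting $k-\delta=n-k-\delta=\delta$ collapses it to $l_2=\delta\cdot\max_{1\leq j\leq\delta-1}\min\{\delta-j+1,\,j+1\}$. Here I would observe that $\delta-j+1$ decreases while $j+1$ increases in $j$, so the inner minimum is maximized near the crossing point $j=\delta/2$; checking the integer values on either side shows $\max_{j}\min\{\delta-j+1,j+1\}=\lfloor\delta/2\rfloor+1$ for both parities of $\delta$, whence $l_2=(\lfloor\delta/2\rfloor+1)\delta$. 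The boundary case is $\delta=1$: then the index set $1\leq j\leq\delta-1$ is empty, so $l_2$ does not contribute and $l=l_1=2$. For $\delta\geq 2$ one has $\lfloor\delta/2\rfloor+1\geq 2$, so $l_2\geq 2\delta=l_1$ and thus $l=l_2=(\lfloor\delta/2\rfloor+1)\delta$. Assembling the four terms yields the stated bound; the only subtlety is the parity analysis together with the $\delta=1$ boundary case in computing $l$.
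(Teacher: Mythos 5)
Your proposal is correct and follows essentially the same route as the paper: apply Construction \ref{new} with $n=4\delta$, $k=2\delta$, bound the required $(2\delta\times 2\delta,\delta,[0,\delta])_q$-GRMC via Proposition \ref{grmc} and the rank distribution, and compute $l_1=2\delta$, $l_2=(\lfloor\delta/2\rfloor+1)\delta$. Your explicit treatment of the $\delta=1$ case (empty index set for $l_2$) is a small but welcome refinement over the paper's one-line computation.
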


\proof By Theorems \ref{grmc} and \ref{distribution}, $A^R_q(2\delta\times 2\delta,\delta,[0,\delta]) \geq (q^{2\delta}-1)\begin{bmatrix}
 2\delta \\
 \delta \\
 \end{bmatrix}_q+1$. Apply Construction \ref{new} with $n=4\delta$ and $k=2\delta$. Then $l_1=2\delta$, $l_2=(\lfloor\frac{\delta}{2}\rfloor+1)\delta$, and so $\max\{l_1,l_2\}=l$. \qed


Combining Corollary \ref{cor2} and Theorem \ref{upper-h}(3) with $n=4\delta$ and $k=2\delta$, we have the following corollary.

\begin{corollary}\label{cor422} Let $\delta\geq 2$. Then
\begin{center}
$q^{2\delta(\delta+1)}+(q^{2\delta}-1){\small\begin{bmatrix}
 2\delta \\
 \delta \\
 \end{bmatrix}_q}+q^{(\lfloor\frac{\delta}{2}\rfloor+1)\delta}+q^{\delta}+1\leq \bar{A}_q(4\delta,~2\delta,~2\delta)\leq q^{2\delta(\delta+1)}+(q^{2\delta}+q^{\delta}){\small \begin{bmatrix}
 2\delta \\
 \delta \\
 \end{bmatrix}_q}+1$.
\end{center}
\end{corollary}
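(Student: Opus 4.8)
The plan is to obtain the two inequalities directly from the two results cited in the surrounding text, so the work is entirely a matter of specialization and simplification rather than a fresh argument. The left-hand inequality is simply Corollary~\ref{cor2} read in the regime $\delta\geq 2$: in that case the case distinction there gives $l=(\lfloor\frac{\delta}{2}\rfloor+1)\delta$, and the resulting bound is precisely the left-hand side of the claimed chain. No further work is needed for the lower bound.

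For the upper bound I would invoke Theorem~\ref{upper-h}(3) with $n=4\delta$ and $k=2\delta$. First I would verify that the hypotheses apply: one has $n=4\delta=2k$, so $n\geq 2k$ holds, and $2\delta\leq k=2\delta<3\delta$ (the strict inequality uses $\delta>0$), so the case $2\delta\leq k<3\delta$ is the relevant one. Substituting these values, the leading term becomes $q^{(n-k)(k-\delta+1)}=q^{2\delta(\delta+1)}$, matching the claim.

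Next I would evaluate the two remaining terms. The middle term is $A_q(n-k,6\delta-2k,2\delta)=A_q(2\delta,2\delta,2\delta)$; since here the ambient dimension equals the subspace dimension, the Grassmannian contains only the whole space, whence this term equals $1$ and supplies the trailing $+1$. For the last term, with $k-2\delta+1=1$ and $n-k-\delta=\delta$ the power of $q$ is $q^{\delta}$, and the Gaussian-coefficient factor simplifies via $\begin{bmatrix}2\delta\\2\delta-1\end{bmatrix}_q=\begin{bmatrix}2\delta\\1\end{bmatrix}_q=\frac{q^{2\delta}-1}{q-1}$ and $\begin{bmatrix}\delta\\\delta-1\end{bmatrix}_q=\frac{q^{\delta}-1}{q-1}$, so their quotient is $\frac{q^{2\delta}-1}{q^{\delta}-1}=q^{\delta}+1$. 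Hence the last term equals $q^{\delta}(q^{\delta}+1)\begin{bmatrix}2\delta\\\delta\end{bmatrix}_q=(q^{2\delta}+q^{\delta})\begin{bmatrix}2\delta\\\delta\end{bmatrix}_q$. Collecting the three contributions reproduces the right-hand side of the chain, and concatenating with the lower bound yields the corollary.

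Since nothing here is genuinely difficult, the only point demanding care---the nearest thing to an obstacle---is the bookkeeping of the Gaussian coefficients in the third term of Theorem~\ref{upper-h}(3): one must use the symmetry $\begin{bmatrix}m\\r\end{bmatrix}_q=\begin{bmatrix}m\\m-r\end{bmatrix}_q$ to rewrite both binomials as weight-$1$ coefficients before the telescoping $\frac{q^{2\delta}-1}{q^{\delta}-1}=q^{\delta}+1$ becomes visible, and to remember that $A_q(2\delta,2\delta,2\delta)=1$ rather than something larger. Once these two simplifications are in hand, the two bounds line up exactly as stated.
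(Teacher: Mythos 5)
Your proposal is correct and matches the paper's own (one-line) justification: the lower bound is Corollary \ref{cor2} with $\delta\geq 2$, and the upper bound is Theorem \ref{upper-h}(3) specialized to $n=4\delta$, $k=2\delta$, with exactly the simplifications you describe ($A_q(2\delta,2\delta,2\delta)=1$ and $\frac{q^{2\delta}-1}{q^{\delta}-1}=q^{\delta}+1$). Nothing is missing.
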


 \begin{remark}\label{4delta}
We can calculate the ratio between the lower bound and the upper bound of the CDCs in Corollary $\ref{cor422}$:
\begin{align*}
\frac{{\rm the~lower~ bound~ of~}\mathcal C}{{\rm the~ upper~ bound~ of~}\mathcal C}&=\frac{q^{2\delta(\delta+1)}+(q^{2\delta}-1){\small\begin{bmatrix}
 2\delta \\
 \delta \\
 \end{bmatrix}_q}+q^{(\lfloor\frac{\delta}{2}\rfloor+1)\delta}+q^{\delta}+1}{q^{2\delta(\delta+1)}+(q^{2\delta}+q^{\delta}){\small \begin{bmatrix}
 2\delta \\
 \delta \\
 \end{bmatrix}_q}+1}\\&=1-\frac{(q^{\delta}+1){\small\begin{bmatrix}
 2\delta \\
 \delta \\
 \end{bmatrix}_q}-q^{(\lfloor\frac{\delta}{2}\rfloor+1)\delta}-q^{\delta}}{q^{2\delta(\delta+1)}+(q^{2\delta}+q^{\delta}){\small \begin{bmatrix}
 2\delta \\
 \delta \\
 \end{bmatrix}_q}+1}\geq \frac{4642}{4797}> 0.967688.
\end{align*}
Furthermore, for $\delta\geq 3$,
\begin{align*}
\frac{{\rm the~lower~ bound~ of~}\mathcal C}{{\rm the~ upper~ bound~ of~}\mathcal C}\geq \frac{16865174}{16877657}> 0.99926.
\end{align*}
To ensure smooth reading of the paper, we move the proof to Appendix \ref{app-A}.
 \end{remark}

There is no systematic approach so far to give a $(4\delta,2\delta,2\delta)_q$-CDC attaining the lower bound in Corollary \ref{cor422} for general $\delta$. In principle, people can always pick up suitable identifying vectors and then use the multilevel construction to construct an optimal $(4\delta,2\delta,2\delta)_q$-CDC. However, how to choose identifying vectors effectively is still an open and different problem. The combination of the parallel construction and the multilevel construction helps to weaken the requirement for identifying vectors and provides good constant dimension codes with large size.

\section{Concluding remarks}

Constructions \ref{comb} and \ref{new} for CDCs are established in this paper by combining the parallel construction and the multilevel construction. How to choose identifying vectors compatible with a given parallel construction as many as possible is still an open problem. This paper initials the study.

GRMCs play a fundamental role in our constructions. It is meaningful to investigate various constructions for GRMCs as an independent research topic.

\appendix
\section{Appendix}\label{app-A}

\textbf{Proof of Remark \ref{4delta}}\ \ Write
\begin{center}
$f(\delta)=\frac{(q^{\delta}+1){\small\begin{bmatrix}
 2\delta \\
 \delta \\
 \end{bmatrix}_q}-q^{(\lfloor\frac{\delta}{2}\rfloor+1)\delta}-q^{\delta}}
 {q^{2\delta(\delta+1)}+(q^{2\delta}+q^{\delta}){\small \begin{bmatrix}
 2\delta \\
 \delta \\
 \end{bmatrix}_q}+1}$.
\end{center}
We claim that given any prime power $q$, $f(\delta)$ is a non-increasing function on $\delta$ for any $\delta\geq 2$. We have
\begin{align*}
f(\delta)&-f(\delta+1)
\\&=\frac{\left((q^{\delta}+1){\small\begin{bmatrix}
 2\delta \\
 \delta \\
 \end{bmatrix}_q}-q^{(\lfloor\frac{\delta}{2}\rfloor+1)\delta}-q^{\delta}\right)\left(q^{2(\delta+1)(\delta+2)}+(q^{2\delta+2}+q^{\delta+1}){\small \begin{bmatrix}
 2\delta+2 \\
 \delta+1 \\
 \end{bmatrix}_q}+1\right)}{\left(q^{2(\delta+1)(\delta+2)}+(q^{2\delta+2}+q^{\delta+1}){\small \begin{bmatrix}
 2\delta+2 \\
 \delta+1 \\
 \end{bmatrix}_q}+1\right)\left(q^{2\delta(\delta+1)}+(q^{2\delta}+q^{\delta}){\small \begin{bmatrix}
 2\delta \\
 \delta \\
 \end{bmatrix}_q}+1\right)}
\\&-\frac{\left((q^{\delta+1}+1){\small\begin{bmatrix}
 2\delta +2 \\
 \delta +1\\
 \end{bmatrix}_q}-q^{(\lfloor\frac{\delta+1}{2}\rfloor+1)(\delta+1)}-q^{\delta+1}\right)\left(q^{2\delta(\delta+1)}+(q^{2\delta}+q^{\delta}){\small \begin{bmatrix}
 2\delta \\
 \delta \\
 \end{bmatrix}_q}+1\right)}{\left(q^{2(\delta+1)(\delta+2)}+(q^{2\delta+2}+q^{\delta+1}){\small \begin{bmatrix}
 2\delta+2 \\
 \delta+1 \\
 \end{bmatrix}_q}+1\right)\left(q^{2\delta(\delta+1)}+(q^{2\delta}+q^{\delta}){\small \begin{bmatrix}
 2\delta \\
 \delta \\
 \end{bmatrix}_q}+1\right)}.
\end{align*}
Let
\begin{align*}
g(\delta)&={\left((q^{\delta}+1){\small\begin{bmatrix}
 2\delta \\
 \delta \\
 \end{bmatrix}_q}-q^{(\lfloor\frac{\delta}{2}\rfloor+1)\delta}-q^{\delta}\right)\left(q^{2(\delta+1)(\delta+2)}+(q^{2\delta+2}+q^{\delta+1}){\small \begin{bmatrix}
 2\delta+2 \\
 \delta+1 \\
 \end{bmatrix}_q}+1\right)}
\\&-{\left((q^{\delta+1}+1){\small\begin{bmatrix}
 2\delta +2 \\
 \delta +1\\
 \end{bmatrix}_q}-q^{(\lfloor\frac{\delta+1}{2}\rfloor+1)(\delta+1)}-q^{\delta+1}\right)\left(q^{2\delta(\delta+1)}+(q^{2\delta}+q^{\delta}){\small \begin{bmatrix}
 2\delta \\
 \delta \\
 \end{bmatrix}_q}+1\right)}.
\end{align*}
It suffices to verify $g(\delta)\geq 0$ for any $\delta\geq 2$.

Since $q^{\delta+j}-1\geq (q^{j}-1)q^{\delta}$, when $\delta\geq3$, we have ${\small\begin{bmatrix}
 2\delta \\
 \delta \\
 \end{bmatrix}_q}=\frac{(q^{2\delta}-1)\cdots (q^{\delta+1}-1)}{(q^{\delta}-1)\cdots(q-1)}\geq q^{\delta^2}\geq q^{(\lfloor\frac{\delta}{2}\rfloor+1)\delta}+q^{\delta}$. When $\delta=2$, ${\small\begin{bmatrix}
 4 \\
 2 \\
 \end{bmatrix}_q}=q^4+q^3+2q^2+q+1>q^{(\lfloor\frac{2}{2}\rfloor+1)2}+q^{2}$. So ${\small\begin{bmatrix}
 2\delta \\
 \delta \\
 \end{bmatrix}_q}\geq q^{(\lfloor\frac{\delta}{2}\rfloor+1)\delta}+q^{\delta}$ for any $\delta\geq2$. It follows that
\begin{align*}
g(\delta)&\geq q^{\delta}{\small\begin{bmatrix}
 2\delta \\
 \delta \\
 \end{bmatrix}_q}\left(q^{2(\delta+1)(\delta+2)}+(q^{2\delta+2}+q^{\delta+1}){\small \begin{bmatrix}
 2\delta+2 \\
 \delta+1 \\
 \end{bmatrix}_q}\right)-\left((q^{\delta+1}+1){\small\begin{bmatrix}
 2\delta +2 \\
 \delta +1\\
 \end{bmatrix}_q}\right)\times \\&\left(q^{2\delta(\delta+1)}+(q^{2\delta}+q^{\delta}){\small \begin{bmatrix}
 2\delta \\
 \delta \\
 \end{bmatrix}_q}+1\right).
\end{align*}
Write
\begin{align*}
g_1(\delta)&= q^{\delta}{\small\begin{bmatrix}
 2\delta \\
 \delta \\
 \end{bmatrix}_q}(q^{2\delta+2}+q^{\delta+1}){\small \begin{bmatrix}
 2\delta+2 \\
 \delta+1 \\
 \end{bmatrix}_q}-(q^{\delta+1}+1){\small\begin{bmatrix}
 2\delta +2 \\
 \delta +1\\
 \end{bmatrix}_q}(q^{2\delta}+q^{\delta}){\small \begin{bmatrix}
 2\delta \\
 \delta \\
 \end{bmatrix}_q}
\end{align*}
and
\begin{align*}
g_2(\delta)&= q^{\delta}{\small\begin{bmatrix}
 2\delta \\
 \delta \\
 \end{bmatrix}_q}q^{2(\delta+1)(\delta+2)}-(q^{\delta+1}+1){\small\begin{bmatrix}
 2\delta +2 \\
 \delta +1\\
 \end{bmatrix}_q}\left(q^{2\delta(\delta+1)}+1\right).
\end{align*}
Then $g(\delta)\geq g_1(\delta)+g_2(\delta)$. Clearly
$$g_1(\delta)={\small\begin{bmatrix}
 2\delta \\
 \delta \\
 \end{bmatrix}_q}{\small\begin{bmatrix}
 2\delta +2 \\
 \delta +1\\
 \end{bmatrix}_q}\left(q^{3\delta+2}-q^{3\delta+1}-q^{2\delta}-q^{\delta}\right)\geq 0.$$
It remains to examine $g_2(\delta)\geq 0$.
Since
${\small\begin{bmatrix}
 2\delta +2 \\
 \delta +1\\
 \end{bmatrix}_q}=\frac{(q^{2\delta+2}-1)(q^{2\delta+1}-1)}{(q^{\delta+1}-1)^2}{\small\begin{bmatrix}
 2\delta \\
 \delta \\
 \end{bmatrix}_q}$ and
 $$q^{\delta}(q^{\delta+1}-1) \leq q^{2\delta+1}-1\leq (q^{\delta}+1)(q^{\delta+1}-1),$$
we have
$q^{\delta}(q^{\delta+1}+1){\small\begin{bmatrix}
 2\delta \\
 \delta \\
 \end{bmatrix}_q}\leq {\small\begin{bmatrix}
 2\delta +2 \\
 \delta +1\\
 \end{bmatrix}_q} \leq (q^{\delta}+1)(q^{\delta+1}+1){\small\begin{bmatrix}
 2\delta \\
 \delta \\
 \end{bmatrix}_q}$. Thus
\begin{align*}
g_2(\delta)\geq & q^{\delta}{\small\begin{bmatrix}
 2\delta \\
 \delta \\
 \end{bmatrix}_q}q^{2(\delta+1)(\delta+2)}-(q^{\delta+1}+1)(q^{\delta}+1)(q^{\delta+1}+1){\small\begin{bmatrix}
 2\delta \\
 \delta \\
 \end{bmatrix}_q}(q^{2\delta(\delta+1)}+1)\\
 =&{\small\begin{bmatrix}
 2\delta \\
 \delta \\
 \end{bmatrix}_q}(q^{2\delta^2+7\delta+4}-q^{2\delta^2+5\delta+2}-q^{2\delta^2+4\delta+2}-2q^{2\delta^2+4\delta+1}-2q^{2\delta^2+3\delta+1}-q^{2\delta^2+3\delta}
 \\&-q^{2\delta^2+2\delta}-q^{3\delta+2}-q^{2\delta+2}-2q^{2\delta+1}-2q^{\delta+1}-q^{\delta}-1)\geq 0.
\end{align*}
So given any prime power $q$, $f(\delta)$ is a non-increasing function on $\delta$ for any $\delta\geq 2$.

Since
\begin{align*}
f(2)&=\frac{(q^2+1){\small\begin{bmatrix}
 4 \\
 2 \\
 \end{bmatrix}_q}-q^4-q^2}{q^{12}+(q^4+q^2){\small\begin{bmatrix}
 4 \\
 2 \\
 \end{bmatrix}_q}+1}=\frac{(q^2+1)(q^4+q^3+q^2+q+1)}{q^{12}+(q^4+q^2)(q^2+1)(q^2+q+1)+1}\\
 &=\frac{q^6+q^5+2q^4+2q^3+2q^2+q+1}{q^{12}+q^8+q^7+3q^6+2q^5+3q^4+q^3+q^2+1}\leq\frac{155}{4797},
\end{align*}
we have for any $\delta\geq 2$,
\begin{align*}
\frac{{\rm the~lower~ bound~ of~}\mathcal C}{{\rm the~ upper~ bound~ of~}\mathcal C}&=1-f(\delta)\geq1-f(2)\geq 1-\frac{155}{4797}> 0.967688.
\end{align*}
Similarly since
\begin{align*}
f(3)&=\frac{(q^3+1){\small\begin{bmatrix}
 6 \\
 3 \\
 \end{bmatrix}_q}-q^6-q^3}{q^{24}+(q^6+q^3){\small\begin{bmatrix}
 6 \\
 3 \\
 \end{bmatrix}_q}+1}\leq \frac{12483}{16877657},
\end{align*}
we have for any $\delta\geq 3$,
\begin{align*}
\frac{{\rm the~lower~ bound~ of~}\mathcal C}{{\rm the~ upper~ bound~ of~}\mathcal C}=1-f(\delta)\geq 1-f(3)\geq 1-\frac{12483}{16877657}>0.99926.
\end{align*} \qed

\newpage

\section{Appendix}\label{app-B}

\begin{center} {Table~1: Constant dimension codes from Theorem \ref{new-3} and \cite{hkkw}\\
Lower bounds for ${\bar A}_q(n,2\delta,k)$} \label{T1} \vskip 2.5mm
\begin{tabular}{|c|c|c|}\hline
  ${\bar A}_q(n,2\delta,k)$ & Theorem \ref{new-3} & \cite{hkkw}  \\
  \hline
  ${\bar A}_3(15,6,6)$ &   150102543990846750 &     150102261281924288   \\
  \hline
    ${\bar A}_4(15,6,6)$ & 4722384778841908199452 & 4722384497336874172416  \\
  \hline
      ${\bar A}_5(15,6,6)$ & 14551922738557090682988320 & 14551922678951263378341888   \\
  \hline
      ${\bar A}_7(15,6,6)$ & 2651730911763599010817616 &  2651730911572017468075166 \\
      & 918746 &  138368  \\
  \hline
      ${\bar A}_8(15,6,6)$ & 3245185560810007534450532 &  3245185560762783660124143   \\
       & 05203320 & 69988608  \\
  \hline
      ${\bar A}_9(15,6,6)$ & 2252839960316867802912978 &  2252839960308891273979904  \\
              & 0303636252 & 2841640960  \\

  \hline

        ${\bar A}_3(16,6,6)$ & 12158306011246213950 &  12158283163835867136  \\
  \hline
             ${\bar A}_4(16,6,6)$ & 1208930503358636748324892 &   1208930431318239788138496  \\
  \hline
             ${\bar A}_5(16,6,6)$ & 909495171159508342705798  &   909495167434454027503612  \\
              & 8320 & 7232  \\
  \hline
            ${\bar A}_7(16,6,6)$ &  6366805919144396570740848 &   6366805918684414419355934  \\
             & 233189146 & 306336768  \\
  \hline
            ${\bar A}_8(16,6,6)$ &  1329228005707779000468281 &   1329228005688436187186849  \\
             & 480881260920 & 259473338368  \\
  \hline
            ${\bar A}_9(16,6,6)$ &  1478088297963896954275969  &  1478088297958663518971939  \\
            & 56677838811932 & 31748755374080  \\
 \hline
        ${\bar A}_3(17,6,6)$ &  984822786754900790910 &  984820936270705197056  \\
  \hline
          ${\bar A}_4(17,6,6)$ &  309486208859711440279978012 &   309486190417469385763454976  \\
  \hline

            ${\bar A}_5(17,6,6)$ & 56843448197469116506558079 &    56843447964653369704091609  \\
              & 88320 &  98912 \\
  \hline
            ${\bar A}_7(17,6,6)$ & 15286701011865696133769150  &   15286701010761278864076273  \\
            & 164214433946 &  642983391232  \\
  \hline
            ${\bar A}_8(17,6,6)$ & 54445179113790627852329396  &   54445179112998346227173345 \\
            & 89634331511160 &  66802793955328   \\
  \hline
            ${\bar A}_9(17,6,6)$ & 96977373229411279169036992 &    96977373229067909497849520  \\
            & 0953064514284572 &  2646735567454208   \\
  \hline
        ${\bar A}_3(17,6,7)$ &      717934513945606807214448  &     717934462541344066764800  \\
  \hline
          ${\bar A}_4(17,6,7)$ &    1267655437138714999914722 &     1267655435949954604087111  \\
          & 735680 &  581696  \\
  \hline

            ${\bar A}_5(17,6,7)$ &  8881788744768854329316669 &     8881788744477090184835827  \\
            & 4301204500 &  3727594496  \\
  \hline
            ${\bar A}_7(17,6,7)$ &  1798465087215432610771286  &    1798465087215053471877135 \\
            & 951668456912186048 &  499494277221711872  \\
  \hline
            ${\bar A}_8(17,6,7)$ &  1427247703339824504317286  &    1427247703339783847337612 \\
             & 302786023739979637248 &  952679951618625503232  \\
  \hline
      ${\bar A}_9(17,6,7)$ &        5153775220622933028375569 &     5153775220622908412230038  \\
           & 74561316380272313825860 &  58973884622428476276736  \\

  \hline
  ${\bar A}_3(18,6,7)$ & 174458086133950569694999932 & 174458074397546630638206976   \\
  \hline
\end{tabular}
 \end{center}

\begin{center}{Table~1 (Cont.): Constant dimension codes from Theorem \ref{new-3} and \cite{hkkw}\\
Lower bounds for ${\bar A}_q(n,2\delta,k)$} \label{T1-1} \vskip 2.5mm
\begin{tabular}{|c|c|c|}
  \hline
  ${\bar A}_q(n,2\delta,k)$ & Theorem \ref{new-3} & \cite{hkkw}  \\

  \hline
 ${\bar A}_4(18,6,7)$ &  129807916760321574216291241 & 129807916641275351458520225   \\
  & 3988416 & 9656704  \\
  \hline
 ${\bar A}_5(18,6,7)$ &  277555898273931997862551572 & 277555898264909066117850548  \\
  & 409926204500 & 774715260928  \\
  \hline
   ${\bar A}_7(18,6,7)$ &302268027208297537838299541 & 302268027208234077195734138   \\
     & 69875398154049929084 & 40701999450419101696  \\
  \hline
   ${\bar A}_8(18,6,7)$ &467680527430393663434529587 & 467680527430380371095589012   \\
     & 83305087356415231304192 & 33416654639120489906176  \\
  \hline
   ${\bar A}_9(18,6,7)$ &304325273002563570083080541 & 304325273002562128138529778  \\
   & 77589445454500732900196836 & 45672945438783195662254080  \\
  \hline
 ${\bar A}_3(19,6,7)$ &  42393314923753431509633199 &  42393312078603826847037784   \\
  & 312 & 064  \\
  \hline
 ${\bar A}_4(19,6,7)$ &  132923306762526363919145683 & 132923306640665959893524711   \\
  & 3419327040 & 3888464896  \\
  \hline
 ${\bar A}_5(19,6,7)$ &  867362182106035125792002834 & 867362182077840679822534739  \\
 & 689855238704500 & 559109441355776  \\
  \hline
   ${\bar A}_7(19,6,7)$ &508021873328985670761663974 & 508021873328878943122344222   \\
   & 813537143982278408961152 & 542078792818337377157120  \\
  \hline
   ${\bar A}_8(19,6,7)$ &153249555228391395614936944 &   153249555228387040000602607   \\
      & 5210953425471736128141435392 & 5616596939214700213245575168  \\
  \hline
   ${\bar A}_9(19,6,7)$ &17970103045528376249648755925 & 17970103045528289061322277127  \\
       & 65293397244462194049577733380 & 01292762916333299089237082112  \\
  \hline
\end{tabular}
 \end{center}

\newpage

\begin{center} {Table~2: Constant dimension codes from Theorem \ref{new-3}, \ref{con4}, \ref{cdc45} and \cite{hkkw}\\
Lower bounds for ${\bar A}_q(n,2\delta,k)$}\vskip 2.5mm
\begin{tabular}{|c|c|c|c|c|}
  \hline
${\bar A}_q(n,2\delta,k)$   & Theorem \ref{cdc45}  &Theorem \ref{con4}& Theorem \ref{new-3}&  \cite{hkkw} \\
   \hline
  ${\bar A}_3(13,4,5)$ &           187977330080&187644030023&187623140212&187646890063\\
  &            0662 &1043&3284&  3708\\
  \hline
     ${\bar A}_4(13,4,5)$ &        185244551321&185193502027&185191080330&185193668253 \\
       & 42240085 &  94936427 & 51471424  &  31922597  \\
  \hline
      ${\bar A}_5(13,4,5)$ &       233220333417&233204343725&233203799712&233204366687 \\
        & 60498047656 &  35344411636 &  28104104500  &  01425801556  \\
  \hline
       ${\bar A}_7(13,4,5)$ &      110489772145&110488804026&110488785701&110488804410 \\
                        &   826259551989&935735689413&592348093173&257915997937  \\
                         &4274 & 6924  &  0404  &  0508  \\
  \hline
         ${\bar A}_8(13,4,5)$ &    792478237928&792475147017&792475101353&792475147744  \\
                           &289801654459&480054665841&872707790607&175434285903  \\
                           &47977 & 57579  & 24224   & 14633  \\
  \hline
           ${\bar A}_9(13,4,5)$ &  343421389872&343420732650&343420724881&343420732748 \\
                           &099066234534&983522582815&350622237717&274503618449\\
                           &2116660&7580092&6247844 &0814932\\
  \hline
  ${\bar A}_3(14,4,5)$ &    152290128114     &151991664671&151971000771&151993980961 \\
                            & 549994 &267113&313764&  035277 \\
  \hline
     ${\bar A}_4(14,4,5)$ &        474234835438&474095365198&474088568287&474095790728 \\
                           &9224194389 & 0418663364  &  3806466624  &  5780822949  \\
  \hline
      ${\bar A}_5(14,4,5)$ &       145763149391&145752714828&145752356230&145752729179\\
                           &911682128914&382638784555&952762603545&384155801334 \\
                           &06&29  & 00  & 31 \\
  \hline
       ${\bar A}_7(14,4,5)$ &      265285995217&265283618468&265283573374&265283619389\\
                           &197106992212&672861039951&849294506738&029256718151 \\
                           &2343434&4145353&6763204& 6018577 \\
  \hline
         ${\bar A}_8(14,4,5)$ &    324599107938&324597820218&324597801169&324597820516  \\
                          & 548775888100&359848132646&235252976985&014257914299 \\
                          & 448539209& 389432068&412575744& 810657417 \\

  \hline
           ${\bar A}_9(14,4,5)$ &  225318779695& 225318342692&   225318337521 &     225318342756 \\
                           &316687971945 &310290649450&979435078382 &  142901825894\\
            &70848710934 &77022477769 &85688931844& 86571731339\\
 \hline
\end{tabular}
 \end{center}

\newpage

\begin{center}
{Table~3: Constant dimension codes from Construction \ref{new} and \cite{hkkw}\\
Lower bounds for ${\bar A}_q(n,2\delta,k)$}\vskip 2.5mm
\begin{tabular}{|c|c|c|}
  \hline
  ${\bar A}_q(n,2\delta,k)$ & Construction \ref{new} & \cite{hkkw}  \\
  \hline
   ${\bar A}_2(12,6,6)$ & 16865174 & 16865101  \\
  \hline
  ${\bar A}_3(12,6,6)$ & 282454201878 & 282454201122   \\
  \hline
    ${\bar A}_4(12,6,6)$ & 281476519731292 & 281476519727132  \\
  \hline
      ${\bar A}_5(12,6,6)$ & 59604684750285320 & 59604684750269570   \\
  \hline
      ${\bar A}_7(12,6,6)$ & 191581237048517757994 & 191581237048517640002   \\
  \hline
      ${\bar A}_8(12,6,6)$ & 4722366523787007642488 & 4722366523787007379832   \\
  \hline
      ${\bar A}_9(12,6,6)$ & 79766443311676870585932 & 79766443311676870053762  \\
  \hline
       ${\bar A}_2(14,6,7)$ & 34532242376 & 34532238023  \\
  \hline
        ${\bar A}_3(14,6,7)$ &  50035894106925204 &  50035894106387202  \\
  \hline
          ${\bar A}_4(14,6,7)$ &  1180598085852258350656 &  1180598085852241507904  \\
  \hline

            ${\bar A}_5(14,6,7)$ &  2910384996920980879329500 &  2910384996920980634798250  \\
  \hline
            ${\bar A}_7(14,6,7)$ &  378818703472375564731912769036  &  378818703472375564718065717034  \\
  \hline
            ${\bar A}_8(14,6,7)$ &  40564819558769908757756030657 &  40564819558769908757687294403  \\
            & 024 &   072  \\
  \hline

  ${\bar A}_9(14,6,7)$ &  25031555123615248786076588088 &  25031555123615248786073763362  \\
  & 37716 &   54514  \\
  \hline

  ${\bar A}_2(16,8,8)$ &  1099562832574 &  1099562828461  \\
  \hline
   ${\bar A}_3(16,8,8)$ &  12157665957048196644 &  12157665957047665122  \\
  \hline
   ${\bar A}_4(16,8,8)$ &  1208925820022362634893084 &  1208925820022362618115612  \\
  \hline
   ${\bar A}_5(16,8,8)$ &  9094947017807612368246590820  &  9094947017807612368002449570  \\
  \hline
   ${\bar A}_7(16,8,8)$ &  63668057609092569002476703621 &  63668057609092569002476565208  \\
             & 23204 &   33602  \\
  \hline
   ${\bar A}_8(16,8,8)$ &  13292279957849213674394204065 &  13292279957849213674394203378  \\
     & 92780664 &   73299832  \\
  \hline
   ${\bar A}_9(16,8,8)$ & 14780882941434601431198749325 & 14780882941434601431198749296\\

  & 1158647364 &   8729104322  \\
  \hline

${\bar A}_2(16,6,8)$ &  282927684131264 &  282927683836351  \\
  \hline
  ${\bar A}_3(16,6,8)$ &  79773403858211769073398 &  79773403858211367304002  \\
  \hline
  ${\bar A}_4(16,6,8)$ &  79228596795209597355803963392 &  79228596795209597286010744832  \\
  \hline
  ${\bar A}_5(16,6,8)$ &  3552716061446350478567982091 &  3552716061446350478564136876  \\
   & 625000 &   781250  \\
  \hline
  ${\bar A}_7(16,6,8)$ &  3670336930316550640268162626  &  3670336930316550640268162462  \\
   & 0312448748394 &   7151289328002  \\
  \hline
  ${\bar A}_8(16,6,8)$ &  2230074539175728767236156261 &   2230074539175728767236156259  \\
  & 8047925701050368 &   9998342819479552  \\
  \hline
  ${\bar A}_9(16,6,8)$ &  6362685459865446204861526038 &  63626854598654462048615260385 \\
   & 705059941329515532 &   54759414900421762  \\
  \hline
 ${\bar A}_2(18,8,9)$ & 18015215398134856 &  18015215398068295  \\
  \hline
  ${\bar A}_3(18,8,9)$ &   58149739380417667241629716 &  58149739380417667198523946  \\
  \hline
${\bar A}_4(18,8,9)$ &  32451855376784298642321718 &  32451855376784298642321288  \\
     & 2266944 &   6251072  \\
  \hline
   ${\bar A}_5(18,8,9)$ &  55511151231735878357960116 &  55511151231735878357960116 \\
    & 981761704500 &   829164048250  \\
    \hline
\end{tabular}
 \end{center}

\begin{center}{Table~3 (Cont.): Constant dimension codes from Construction \ref{new} and \cite{hkkw}\\
Lower bounds for ${\bar A}_q(n,2\delta,k)$}\vskip 2.5mm
\begin{tabular}{|c|c|c|}
    \hline
  ${\bar A}_q(n,2\delta,k)$ & Construction \ref{new} & \cite{hkkw}  \\
  \hline

   ${\bar A}_7(18,8,9)$ &  43181145673965918176230160  & 43181145673965918176230160  \\
     & 95285332497749370596 &  95285299264536325746  \\
  \hline
   ${\bar A}_8(18,8,9)$ &  58460065493236358379340343 &  58460065493236358379340343  \\
      & 02923933871658428965376 &  02923933590182378512896  \\
  \hline
   ${\bar A}_9(18,8,9)$ &  33813919135227284246202802 &  33813919135227284246202802  \\
       & 47018514715266280331502884 &  47018514713413256655866642  \\
  \hline

   ${\bar A}_2(18,6,9)$ &  9271545156585415680 &  9271545156551861247  \\
  \hline
   ${\bar A}_3(18,6,9)$ &  11446612801881132293137038  &  11446612801881132287488447  \\
    & 59802 &  86840  \\
  \hline
   ${\bar A}_4(18,6,9)$ &  85071058146182803276503914 &   85071058146182803276503351  \\
    & 069802090496 &  119848669184  \\
  \hline
   ${\bar A}_5(18,6,9)$ &  10842028996571097790669084 &  108420289965710977906690845  \\
     & 5136306309921875000 &  017097020371093750  \\
  \hline
   ${\bar A}_7(18,6,9)$ &  174251503388975551318884922 &  174251503388975551318884922  \\
    & 599369849935281754612330830 &  599369466772818993479502028  \\
  \hline
   ${\bar A}_8(18,6,9)$ &  78463772372191979113838163463 & 78463772372191979113838163463  \\
      & 5235743275201736965558894592 &  5235733830468771226268467200  \\
  \hline
   ${\bar A}_9(18,6,9)$ &  1310020512493866339206870302329 &  1310020512493866339206870302329  \\
        & 188713507904303585133560754636 &  188713348371417431388541027914  \\
  \hline
\end{tabular}

 \end{center}

\end{document}